\newtheorem{remark}[theorem]{Remark}
\def\mm#1{\vspace*{-0.5ex}\begin{align}#1\end{align}\vspace*{-0.5ex}\noindent}
\def\nn{\nonumber\\}
\newcommand{\softd}{{\leavevmode\setbox1=\hbox{d}%
          \hbox to 1.05\wd1{d\kern-0.4ex{\char039}\hss}}}
\newcommand{\abs}[1]{\lvert#1\rvert}
\newcommand{\norm}[1]{\lVert#1\rVert}
\newcommand{\D}{\partial}
\newcommand{\dt}{\Delta t}
\newcommand{\dx}{\Delta x}
\newcommand{\dy}{\Delta y}
\newcommand{\mbf}{\mathbf}
\def\bold#1{\mbox{\boldmath $#1$}}
\newcommand{\uu}[1]{\bold{#1}}
\newcommand{\mbb}{\mathbb}
\newcommand{\veps}{\varepsilon}
\newcommand{\NL}{\mathrm{NL}}
\newcommand{\en}{\rho E}
\newcommand{\rf}{\mathrm{ref}}
\newcommand{\Id}{\mbf{Id}} 
\newcommand{\sign}{\mathrm{sign}}
\def\rhohat{\widehat{\rho}}
\def\ub{{\mbf u}}
\def\uhat{\widehat{\mbf u}}
\def\phat{\widehat{p}}
\def\momhat{\widehat{\rho \mbf u}}
\def\enhat{\widehat{\rho E}}
\def\nuhat{\hat\nu}
\def\cstab{{\color{black}c_{\mathbf{stab}}}}
\title{A Weakly Asymptotic
Preserving Low Mach Number Scheme for the Euler Equations of Gas Dynamics
\thanks{S.N. is supported by the DFG under grants number NO 361/3-2 and GSC 111. 
A.B. and M.L. are supported by the DFG under grant number LU 1470/2-2.
K.R.A. was supported by the Alexander-von-Humboldt Foundation
through a postdoctoral fellowship (2011-12).
C.-D.M. is supported by the DFG in the Cluster of Excellence
"Simulation Technology" and SFB TRR 40}}
\author{S. Noelle
 \thanks{Institut f{\"u}r Geometrie und Praktische Mathematik,
  RWTH-Aachen,  Templergraben 55,  D-52056 Aachen,  Germany.
  {\tt noelle@igpm.rwth-aachen.de}}
  \and G. Bispen \thanks{Institut f{\"u}r Mathematik,
  Johannes Gutenberg-Universit{\"a}t Mainz,
 Staudingerweg 9,
   D-55099 Mainz,
   Germany.{\tt bispeng@mathematik.uni-mainz.de } } 
   \and K. R. Arun \thanks{School of Mathematics, Indian Institute of Science Education and 
   Research Thiruvananthapuram,
 India.{\tt arun@iisertvm.ac.in}}
 \and M. Luk\'a\v{c}ov\'a-Medvi\v{d}ov\'a \thanks {Institut f{\"u}r Mathematik,
  Johannes Gutenberg-Universit{\"a}t Mainz,
   Staudingerweg 9,
  D-55099 Mainz,
   Germany. {\tt lukacova@mathematik.uni-mainz.de}}
   \and C.-D. Munz \thanks {Institut f{\"u}r Aerodynamik und Gasdynamik,
   Universit{\"a}t Stuttgart,
  Pfaffenwaldring 21,
  D-70550 Stuttgart,
  Germany. {\tt munz@iag.uni-stuttgart.de} }}
\begin{document}

\maketitle

\begin{AMS}
{[2010] Primary 35L65, 76N15, 76M45; Secondary 65M08, 65M06}
\end{AMS}

\date{26. August 2014} 

\begin{keywords}
Euler equations of gas dynamics, low Mach number limit, stiffness,
 semi-implicit time discretization, flux decomposition,
 asymptotic preserving schemes
\end{keywords}

\pagestyle{myheadings}
\thispagestyle{plain}
\markboth{Noelle, Bispen, Arun, Luk\'a\v{c}ov\'a, Munz}{A Weakly Asymptotic Preserving Low Mach number scheme}

\begin{abstract}
 We propose a low Mach number,
 Godunov-type finite volume scheme for the numerical solution of the
 compressible Euler equations of gas dynamics. The scheme combines
 Klein's non-stiff/stiff decomposition of the fluxes
 (J.\ Comput.\ Phys.\ 121:213-237, 1995) with an explicit/implicit
 time discretization (Cordier et al.\/, J.\ Comput.\ Phys.\ 231:5685-5704, 2012)
 for the split fluxes. This results in a scalar
 second order partial differential equation (PDE) for the
 pressure, which we solve by an iterative approximation.
 Due to our choice of a crucial reference pressure, the stiff
 subsystem is hyperbolic, and the
 second order PDE for the pressure is elliptic.
 The scheme is also uniformly asymptotically consistent.
 Numerical experiments show that the scheme needs to be stabilized
 for low Mach numbers. Unfortunately, this affects the asymptotic consistency,
 which becomes non-uniform in the Mach number, and requires an unduly fine grid in the
 small Mach number limit. On the other hand, the CFL number
 is only related to the non-stiff characteristic speeds, independently of the Mach number.
 Our analytical and numerical results stress the importance of further studies of asymptotic
 stability in the development of AP (asymptotic preserving) schemes.
\end{abstract}

\section{Introduction}
\label{sec:intro}

We consider the non-dimensionalised compressible Euler equations
for an ideal gas, which may be written as a system of conservation
laws in $d=1,2$ or $3$ space dimensions,
\begin{equation}
\label{eq:euler_system}
  U_t + \nabla \cdot F(U)=0,
\end{equation}
where $t>0$ and $\mbf{x} \in \mbb R^d$ are the time and space
variables, $U \in \mbb R^{d+2}$ is the vector of conserved
variables and $F(U) \in \mbb R^{(d+2)\times d}$, the flux vector.
Here,
\begin{equation}
\label{eq:U-F}
 U
=
 \begin{pmatrix}
  \rho\\
  \rho\mbf{u}\\
  \rho E
 \end{pmatrix},
\
 F(U)=
 \begin{pmatrix}
  \rho\mbf{u}\\
  \rho\mbf{u}\otimes\mbf{u}+\frac{p}{\veps^2} \, \Id \\
  (\rho E+p)\mbf{u}
 \end{pmatrix},
\end{equation}
with density $\rho$, velocity $\mbf u$, momentum $\rho \mbf u$,
total specific energy $E$, total energy $\en$ and pressure $p$.
The operators $\nabla,\nabla\cdot,\otimes$ are respectively the
gradient, divergence and tensor product in $\mbb R^d$. The
parameter $\veps$ is the reference Mach number and is usually
given by
\begin{equation}
 \veps
:=
 \frac{u_{\rf}}{\sqrt{p_{\rf}/\rho_{\rf}}},
\label{eq:mach}
\end{equation}
where the basic reference values $\rho_{\rf}, u_{\rf}$, $p_{\rf}$
and length $x_{\rf}$ are problem dependent characteristic numbers.
It has to be noted that the reference Mach number $\veps$ is a
measure of compressibility of the fluid. Throughout this paper we
follow the convention that $0<\veps\leq 1$ and $\veps=1$
corresponds to the fully compressible regime. On the other hand,
if the values of the reference parameters are prescribed in such a
way that $\veps>1$, then we redefine them to get $\veps=1$. This
can be achived, e.g.\ by setting $u_\rf=p_\rf=\rho_\rf=1$.

The system \eqref{eq:euler_system} is closed by the dimensionless
equation of state

\begin{equation}
\label{eq:state_ndim}
 p
 =
 (\gamma-1)
 \left(
 \en - \frac{\veps^2}{2} \rho \norm{\mbf{u}}^2
 \right),
\end{equation}
with $\gamma>1$ being the ratio of specific heats. As long as the
pressure $p$ remains positive, \eqref{eq:euler_system} is
hyperbolic and the eigenvalues in direction $\mbf{n}$ are
\begin{equation}
  \label{eq:eig_euler}
  \lambda_1 = \mbf u \cdot \mbf n - \frac c \veps, \quad
  \lambda_2 = \mbf u \cdot \mbf n, \quad
  \lambda_3 = \mbf u \cdot \mbf n + \frac c \veps,
\end{equation}
where $c = \sqrt{(\gamma p)/\rho}$.  Therefore, the sound speed
$c/\veps$ becomes very large, $\mathcal O(\veps^{-1})$, as the
Mach number $\veps$ becomes small, while the advection speed $\mbf
u$ remains finite. The spectral condition number of the flux
Jacobian becomes proportional to $1/\epsilon$  and numerical
difficulties will occur. If acoustic effects can be neglected,
then the physical time scale is given by
$t_{ref}=x_{\rf}/u_{\rf}$. For an explicit  time discretization,
the Courant-Friedrichs-Lewy condition (CFL number) imposes a
numerical time step $\Delta t$ that  has to be proportional to
$\veps$. Hence, the numerical time step has to be much
smaller  than the physical one and becomes intolerable for $\veps
\ll 1$.

There is an abundance of literature on the computation of low Mach
number weakly compressible and incompressible flows. A pioneering
contribution is due to Chorin \cite{chorin} who introduced the
projection method for incompressible flows via an artificial
compressibility. In \cite{turkel}, Turkel introduced the well
known pre-conditioning approach to reduce the stiffness of the
problem. In the case of unsteady weakly compressible problems,
Klein \cite{klein} derives a multiple pressure variables approach
using low Mach number asymptotic expansions. Klein's work was
later extended to both inviscid and viscous flow problems by Munz
and collaborators, see, e.g.\ \cite{munz-etal,park}. The review
paper \cite{klein-etal} gives a good survey of the recent develops
on the asymptotic based low Mach number approximations and further
references on this subject.

This paper aims to contribute to two well known, challenging
issues: First, we propose and analyse a scheme which is based upon
Klein's non-stiff/stiff splitting \cite{klein}. We combine this
with an explicit/implicit time discretization due to Cordier,
Degond and Kumbaro \cite{cordier}, which transforms the stiff
energy equation into a second order nonlinear PDE for the
pressure.
Due to our choice of reference pressure, both the non-stiff and
the stiff subsystems are hyperbolic. The implicit terms in the stiff
system are reduced to a second order elliptic PDE for
the pressure and is solved at every time step by an iterative approximation.
This procedure allows a
CFL number which is only bounded by the advection velocity (i.e.\ it is
independent of the Mach number $\veps$).

The second issue is related to Klainerman and Majda's celebrated
result that solutions to the compressible Euler equations converge
to the solutions of the incompressible Euler equations as the Mach
number tends to zero. Jin \cite{jin-ap} introduced a related
consistency criterion for numerical schemes: a scheme is
asymptotic preserving (AP), if its lowest order multiscale
expansion is a consistent discretization of the incompressible
limit; see the equations \eqref{mass0}-\eqref{div0}. An AP scheme
for the low Mach number limit of the Euler equations should serve
as a consistent and stable discretization, independent the Mach
number $\veps$. In the fully compressible regime, i.e.\ when
$\veps=\mathcal{O}(1)$, the AP scheme should have the desirable
features of a compressible solver, such as non-oscillatory
solution profiles and good resolution of shock type
discontinuities. On the other hand, the AP scheme should yield a
consistent discretization of the incompressible equations when
$\veps\to 0$. We refer the reader to
\cite{berthon-turpault,bijl-wesseling,cordier,degond,haack} and
the references cited therein for some AP schemes for the Euler
equations and other related applications. In Section \ref{sec:ap}
we prove that our scheme possess the AP property.

While Klein's paper motivated strongly such a pressure-based
approach, in which the pressure is used as a primary variable,
Guillard and Viozat \cite{guillard} 
proposed a further development of the pre-conditioning technique
based on the low Mach number asymptotic analysis. The question
with respect to efficiency of this density-based approach in
comparison to the pressure-based has no clear answer. The
pressure-based solver has its advantage especially in the low Mach
number regime due to the splitting of stiff and non-stiff terms.
All the implicit treatment of the stiff terms is reduced to a
scalar elliptic equation for the pressure. For small Mach numbers
such a pressure-based approach is usually used in commercial
codes. In the fully compressible regime the pressure-based
approach has the restriction of the implicit treatment of the
sonic terms even for $M>1$, which may be no longer necessary. The
density-based approach has the advantage that it starts from a
typical compressible solver as used in aerodynamics to get a
steady state. Hence, we expect that it will be the best approach
for larger Mach numbers. At small Mach numbers, the
preconditioning reduces the sound speed of the system to avoid the
stiffness in the equations. By a dual time stepping technique it
may be extended to unsteady flows. Hence, for very small Mach
numbers the density-based approach should be more difficult - the
$M=0$ case can not be handeled.

The rest of this paper is organised as follows.
Section~\ref{sec:prelim} contains several preliminaries: in
Section~\ref{sec:multiscale} we briefly recall the asymptotic
analysis as well as the incompressible limit due to Klainerman and
Majda \cite{klainerman-majda}. In Section~\ref{sec:splitting} we
recall Klein's flux splitting  and introduce our choice of
reference pressure. In Section~\ref{sec:well_prepared} we prove
that the non-stiff subsystem respects the structure of
divergence-free velocity and constant pressure fields. This
property helps to avoid spurious initial layers for low Mach
number computations.

In Section~\ref{sec:time-discr} we introduce our time discretization and prove
the AP property: in Section~\ref{sec:first-order}, we propose the first order
scheme IMEX scheme. In Section~\ref{sec:elliptic_equation} we derive the
nonlinear elliptic pressure equation and discuss an iterative linearization to
solve it. In Section~\ref{sec:ap} we prove the AP property for this scheme. In
Section~\ref{sec:second-order} we define a second-order time discretization
based on the Runge-Kutta Crank-Nicolson (RK2CN) scheme. At this point our scheme
is not uniformly asymptotically stable with respect to $\veps$. Indeed, for some
test problems, the scheme is only stable if $\dt = O(\veps\dx)$. Therefore, we
introduce a fourth order pressure stabilization in
Section~\ref{sec:stabilization}. We prove asymptotic consistency in
Theorem~\ref{theorem:dtdx23}. While the ratio $\dt/\dx$ is now independent of
$\veps$, we need to restrict both $\dt$ and $\dx$ as the Mach number goes to
zero as $\dt = O(\veps^{2/3}) = \dx$. 

Section~\ref{sec:discrete} describes the fully discrete scheme
with spatial reconstruction, numerical fluxes and a linear system
solver. In Section~\ref{sec:numerics} we present the numerical
experiments.

Finally, we draw some conclusions in Section~\ref{sec:conclusion}. In
particular, we discuss possible ways to analyze and improve asymptotic
stability.




\section{Asymptotic Analysis and Flux Splitting}
\label{sec:prelim}

In this section we review the asymptotic analysis presented in
\cite{klainerman-majda,klein}, from which the incompressible limit
equations are derived (Subsection~\ref{sec:multiscale}), a
splitting of the fluxes into non-stiff and stiff parts with a
technique to guarantee the hyperbolicity of the stiff part
(Subsection~\ref{sec:splitting}) and the importance of
well-prepared initial data which eliminate spurious initial layers
(Subsection~\ref{sec:well_prepared}).

\subsection{Asymptotic Analysis and Incompressible Limit}
\label{sec:multiscale}

In this section we consider the low Mach number limit of the Euler
equations \eqref{eq:euler_system}-\eqref{eq:U-F}, obtained via a
formal multiscale asymptotic analysis \cite{klainerman-majda}.
Following \cite{klein,meister} we use a three-term asymptotic
ansatz
\begin{equation}
  \label{eq:ansatz}
  f(\mbf{x},t)=f^{(0)}(\mbf{x},t)+\veps
  f^{(1)}(\mbf{x},t)+\veps^2f^{(2)}(\mbf{x},t)
\end{equation}
for all the flow variables. The ansatz \eqref{eq:ansatz} was
introduced by Klainerman and Majda \cite{klainerman-majda}. A
drawback of this ansatz is that it cannot resolve long wave
phenomena, particularly those related to acoustic waves. However,
our focus is on resolving slow convective wave, e.g.\ vortices. In
order to resolve long wavelength one has to consider multiple
space scales in \eqref{eq:ansatz} and multiple pressure variables
as \cite{klein,park}.

A multiscale analysis consists of inserting the ansatz
\eqref{eq:ansatz} into the Euler system
\eqref{eq:euler_system}-\eqref{eq:U-F} and balancing the powers of
$\veps$. This leads to a hierarchy of asymptotic equations which
shows the behaviour of the different order terms. In the
following, we briefly review the results of multiscale analysis
presented in \cite{klein,meister}.

The leading order terms in the conserved variables do not give a
completely determined system of equations. Even though the leading
order terms in density and velocity form a coupled system of
equations, the presence of second order pressure term makes the
system incomplete. Both the leading order and second order
pressure terms influence the leading order density and velocity
fields. The pressure $p(\mbf{x},t)$ admits the multiscale
representation \cite{klein}
\begin{equation}
  \label{eq:press}
  p(\mbf{x},t)=p^{(0)}(t)+\veps^2p^{(2)}(\mbf{x},t).
\end{equation}
Here, the leading order pressure term $p^{(0)}$ allows only
temporal variations and $p^{(0)}$ is a thermodynamic variable
satisfying the equation of state, i.e.\
\begin{equation}
  \label{eq:p0}
  p^{(0)}=(\gamma-1)(\rho E)^{(0)}.
\end{equation}
As a result of the compression or expansion at the boundaries, the
pressure $p^{(0)}$ changes in time and vice-versa, according to
the relation
\begin{equation}
  \label{eq:p0-eqn}
  \frac{1}{\gamma
    p^{(0)}}\frac{dp^{(0)}}{dt}=-\frac{1}{\abs{\Omega}}\int_{\D\Omega}
  \mbf{u}^{(0)}\cdot\mbf{n}d\sigma.
\end{equation}
As a consequence of \eqref{eq:p0-eqn}, it can be inferred that the
leading order velocity $\mbf{u}^{(0)}$ cannot be arbitrary; an
application of the Gauss theorem to the right hand side yields a
divergence constraint on $\mbf{u}^{(0)}$. In the limit $\veps\to
0$, $p^{(0)}$ becomes a constant and which gives the standard
divergence-free condition of incompressible flows.

The first order pressure $p^{(1)}$ is also function of time,
independent of $\mbf{x}$. However, it can admit long scale
variations, say $\uu{\xi}=\veps\mbf{x}$ and $p^{(1)}$ can be
thought of as the amplitude of an acoustic wave. Therefore, in
order to include the effect of $p^{(1)}$, instead of
\eqref{eq:ansatz}, one would require a multiple space-scale ansatz
in both $\mbf{x}$ and $\uu{\xi}$, cf.\ \cite{klein}. However, in
the limit $\veps\to 0$, the large scale becomes infinite and
$p^{(1)}$ becomes constant in space and time; see also
\cite{klainerman-majda}. Therefore, in this work we consider only
a single space-scale ansatz \eqref{eq:ansatz} and thus $p^{(1)}$
is constant in space.

In order to interpret the meaning of the second order term
$p^{(2)}$, we write the classical zero Mach number limit
equations, i.e.\
\begin{align}
  \rho^{(0)}_t+\nabla\cdot\left(\rho^{(0)}\mbf{u}^{(0)}\right)&=0,
  \label{mass0}\\
  \left(\rho^{(0)}\mbf{u}^{(0)}\right)_t+\nabla\cdot\left(\rho^{(0)}\mbf{u}^{(0)}
    \otimes\mbf{u}^{(0)}\right)+\nabla p^{(2)}&=0,\label{mom0}\\
  \nabla\cdot\mbf{u}^{(0)}&=-\frac{1}{\gamma
    p^{(0)}}\frac{dp^{(0)}}{dt}. \label{div0}
\end{align}
We notice that the above zero Mach number equation system has a
mixed hyperbolic-elliptic character in contrast to the hyperbolic
Euler equations \eqref{eq:euler_system}-\eqref{eq:U-F}. Here,
$p^{(2)}$ survives as the incompressible pressure which is a
Lagrange multiplier for the divergence-free constraint
\eqref{div0}.


\subsection{Flux-splitting into non-stiff and Stiff Parts}
\label{sec:splitting}

Based on the as\-ymptotic structure presented in the previous
paragraph, Klein \cite{klein} proposed a new splitting of the
Euler fluxes and a novel Godunov-type scheme for the low Mach
number regime. We refer to \cite{hoffmann} for a detailed
derivation. The flux-splitting reads
\begin{equation}
\label{eq:F_split}
 F(U)
=
 \hat{F}(U) + \tilde{F}(U),
\end{equation}
where
\begin{equation}
\label{eq:vars}
 \hat{F}(U)
=
 \begin{pmatrix}
  \rho\mbf{u} \\
  \rho \mbf{u} \otimes \mbf{u} +p \, \Id \\
  (\rho E + \Pi) \mbf{u}
 \end{pmatrix},
\
 \tilde{F}(U)
=
 \begin{pmatrix}
  0 \\
  \frac{1-\veps^2}{\veps^2} p \, \Id  \\
  (p - \Pi) \mbf{u}
 \end{pmatrix}.
\end{equation}
Here, $\Id$ denotes the $d\times d$ identity matrix and
$\Pi=\Pi(\mbf{x},t)$ is an auxiliary pressure variable which
should satisfy
\begin{equation}
  \label{eq:Pi_limits}
  \lim\limits_{\veps\to0} \Pi(\mbf{x},t) = p^{(0)}(t),
  \quad \lim\limits_{\veps\to1} \Pi(\mbf{x},t) = p(\mbf{x},t).
\end{equation}
This is achieved by defining
\begin{equation}
 \label{eq:pi}
 \Pi(\mbf{x},t)
:=
 \veps^2 p(\mbf{x},t) + (1-\veps^2){p}_{\infty}(t),
\end{equation}
where the reference pressure $p_\infty(t)$ has to satisfy
\begin{equation}
  \label{eq:p_infty_limits}
  \lim\limits_{\veps\to0} p_\infty(t) = p^{(0)}(t).
\end{equation}

\begin{remark}
  We give our choice of $p_\infty$ in \eqref{eq:p_infty_def} below.
  \begin{enumerate}
  \item [(i)] The limits \eqref{eq:Pi_limits} respectively
    \eqref{eq:p_infty_limits} and in particular the choice of the
    reference pressure $p_\infty$ play an important role in  assuring
    the correct asymptotic behaviour in the incompressible and
    compressible regimes respectively; see subsection~\ref{sec:ap}
    below.
  \item [(ii)] Klein chooses
    \begin{equation}
      p_{\infty}(t)
      =
      \frac1{\vert\Omega\vert}\int_{\Omega}p(\mbf{x},t)d\mbf{x}.
      \label{eq:p_infty_klein}
    \end{equation}
    The reference pressure $p_\infty$ is related to his so-called
    `nonlocal' pressure $p_{\NL}$ via
    \begin{align}
      p_{\NL}(t)
      &=
      (1-\veps^2){p}_{\infty}(t) \label{pNL2}.
    \end{align}
  \item [(iii)] For future reference we note that
    \begin{align}
      p-\Pi
      =
      (1-\veps^2) (p-p_\infty)
      \label{eq:p-Pi}
    \end{align}
    and hence
    \begin{equation}
      \label{eq:tilde_F}
      \tilde{F}(U)
      =
      (1-\veps^2) \,
      \begin{pmatrix}
        0 \\[0.5ex]
        \frac{p}{\veps^2} \, \Id \\[0.5ex]
        (p-p_{\infty})\ub
      \end{pmatrix}.
    \end{equation}
  \end{enumerate}
\end{remark}

The eigenvalues of the Jacobian of $\hat{F}$, e.g.\ in 1-D case,
are
\begin{equation}
  \label{eq:hat_eig}
  \hat{\lambda}_1=u-c^{*}, \ \hat{\lambda}_2=u, \ \hat{\lambda}_3=u+c^{*},
\end{equation}
where the `pseudo' sound speed is
\begin{equation}
  \label{eq:cstar}
  c^{*}=\sqrt{\frac{p+(\gamma-1)\Pi}{\rho}}.
\end{equation}
Similarly, the eigenvalues of $\tilde{F}$ are
\begin{equation}
  \label{eq:til_eig}
  \tilde{\lambda}_1=-\frac{(1-\veps^2)}{\veps}\sqrt{\frac{(\gamma-1)
      (p-p_{\infty})}{\rho}}, \ \tilde{\lambda}_2=0, \
  \tilde{\lambda}_3=\frac{(1-\veps^2)}{\veps}\sqrt{\frac{(\gamma-1)
      (p-p_{\infty})}{\rho}}.
\end{equation}
Note that the eigenvalues of $\hat{F}$ are $\mathcal{O}(1)$,
whereas those of $\tilde{F}$ are $\mathcal{O}(\veps^{-1})$.
Therefore, $\tilde{F}$ represents the `stiff' part of the original
flux $F$ and $\hat{F}$ is the corresponding `non-stiff' part.

At this point it might also be noted that the quantity under the
square root in the expression for the eigenvalues in
\eqref{eq:til_eig} need not remain positive, which will destroy
the hyperbolicity of the fast system. The crucial point is how to
gauge, or equilibrate, the auxiliary pressure $\Pi$, which is
defined only up to a constant.  Therefore, instead of taking the
average of the pressure to gauge $\Pi$ as in
\eqref{eq:p_infty_klein}, we propose to use the infimum condition
\begin{equation}
  \label{eq:Pi_infty}
  \inf_{\mbf{x}}\Pi(\mbf{x},t)=\inf_{\mbf{x}}p(\mbf{x},t).
\end{equation}
Using \eqref{eq:pi} and \eqref{pNL2}, this leads to
\begin{equation}
  \label{eq:p_infty_def}
  p_{\infty}(t)=\inf_{\mbf{x}}p(\mbf{x},t)=\inf_{\mbf{x}}\Pi(\mbf{x},t)
\end{equation}
which will ensure \eqref{eq:Pi_infty}, \eqref{eq:Pi_limits} and
that the eigenvalues \eqref{eq:til_eig} are all real. As a
preview, we would like to mention that \eqref{eq:p_infty_def} will
also guarantee the ellipticity of the pressure equation
\eqref{eq:elliptic_pressure} in Section \ref{sec:time-discr}.

\begin{remark}
  To summarise, we describe some advantages of Klein's splitting
  \eqref{eq:F_split}, \eqref{eq:vars}, \eqref{eq:pi} with the new
  reference pressure \eqref{eq:p_infty_def}.
  \begin{enumerate}
  \item [(i)] Under condition \eqref{eq:p_infty_def}, both subsystems are
    hyperbolic systems of conservation laws. Following \cite{haack},
    \eqref{eq:F_split} may be called a hyperbolic splitting.
  \item [(ii)] In the limit $\veps\to1$, the stiff flux $\tilde{F}$
    vanishes identically and the non-stiff flux $\hat{F}$ tends to the
    full Euler flux $F$.
\end{enumerate}
\end{remark}

\subsection{Well Prepared Initial Data}
\label{sec:well_prepared}

In \cite{klainerman-majda}, the authors have observed the
importance of well prepared initial data, e.g.\ divergence-free
velocity fields and spatially homogeneous pressure in passing to
the low Mach number limit. The initial data also plays a crucial
role in the loss of accuracy of a numerical scheme in the low Mach
number regimes; see \cite{dellacherie} for details. In the
numerical algorithm developed in the next section we first solve
the auxiliary system
\begin{equation}
  \label{eq:aux_sys}
  U_t+\nabla\cdot\hat{F}(U)=0
\end{equation}
with data $(\rho,\mbf{u},p)$ at time $t$. Using the asymptotics
presented in \cite{klein,meister} it can easily be seen that the
energy equation of \eqref{eq:aux_sys}, i.e.\
\begin{equation}
  \label{eq:aux_engy}
  (\en)_t+\nabla\cdot((\en+\Pi)\mbf{u})=0
\end{equation}
should lead to the divergence-free condition \eqref{div0}. In
fact, the pressure coming from \eqref{eq:aux_engy}, say
$\hat{p}^{n+1}$, should converge to the spatially homogeneous
pressure $p^{(0)}$. Otherwise, the splitting algorithm creates
spurious initial layers. The following proposition states that for
well prepared initial data (a notion introduced in
\cite{klainerman-majda}), $\nabla\cdot\mbf{u}$ and $\nabla p$ grow
at most linearly in time. This is also confirmed by our numerical
experiments, where we do not observe such spurious initial layers.
\begin{proposition}
  \label{prop:well_prepared_data}
  For a well prepared initial data, i.e.\ $({\rho},{\mbf{u}},{p})$
  with
  \begin{equation}
    \label{eq:aux-ic}
    \nabla\cdot{\mbf{u}}(\mbf{x},t)=0, \ \nabla{p}(\mbf{x},t)=0,
  \end{equation}
  the solution of the auxiliary system \eqref{eq:aux_sys} at time
  $t+\dt$ satisfy
  \begin{align}
    \nabla\cdot{\mbf{u}}(\mbf{x},t+\dt)&=\mathcal{O}(\dt),
    \label{aux-div}\\
    \nabla{p}(\mbf{x},t+\dt)&=\mathcal{O}\left(\dt^2\right).
    \label{aux-p}
  \end{align}
\end{proposition}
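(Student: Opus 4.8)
The plan is to reduce both estimates to Taylor expansions in time about $t$, using a single non-conservative evolution equation for the pressure carried by the non-stiff flux $\hat{F}$. Throughout I assume the solution of the auxiliary system \eqref{eq:aux_sys} is smooth on $[t,t+\dt]$, so that the expansions below have bounded remainders and $\partial_t$ commutes with $\nabla$.

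The divergence estimate \eqref{aux-div} is immediate. Writing $\mbf{u}(\cdot,t+\dt)=\mbf{u}(\cdot,t)+\dt\,\mbf{u}_t(\cdot,t)+\mathcal{O}(\dt^2)$ and taking the divergence, the hypothesis $\nabla\cdot\mbf{u}(\cdot,t)=0$ from \eqref{eq:aux-ic} kills the leading term, leaving $\nabla\cdot\mbf{u}(\cdot,t+\dt)=\dt\,\nabla\cdot\mbf{u}_t(\cdot,t)+\mathcal{O}(\dt^2)=\mathcal{O}(\dt)$.

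For \eqref{aux-p} the point is that $p_t$ vanishes identically in space at time $t$. To see this I would derive the pressure equation associated with \eqref{eq:aux_sys}: forming the kinetic-energy balance from the mass and momentum equations of \eqref{eq:aux_sys} gives $\partial_t\bigl(\tfrac{\veps^2}{2}\rho\norm{\mbf{u}}^2\bigr)+\nabla\cdot\bigl(\tfrac{\veps^2}{2}\rho\norm{\mbf{u}}^2\,\mbf{u}\bigr)=-\veps^2\,\mbf{u}\cdot\nabla p$; subtracting this from the energy equation \eqref{eq:aux_engy}, eliminating $\rho E$ via the equation of state \eqref{eq:state_ndim}, and using $\nabla\Pi=\veps^2\nabla p$ (valid since $p_\infty$ in \eqref{eq:pi}, \eqref{eq:p_infty_def} depends on $t$ only), one obtains after the cancellations
\begin{equation}
  p_t+\mbf{u}\cdot\nabla p+\bigl(p+(\gamma-1)\Pi\bigr)\,\nabla\cdot\mbf{u}=0,
  \label{eq:aux-pressure-eqn}
\end{equation}
that is, $p_t+\mbf{u}\cdot\nabla p+\rho\,(c^*)^2\,\nabla\cdot\mbf{u}=0$ with the pseudo sound speed $c^*$ of \eqref{eq:cstar}. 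Evaluating \eqref{eq:aux-pressure-eqn} at time $t$ and using $\nabla p(\cdot,t)=0$ and $\nabla\cdot\mbf{u}(\cdot,t)=0$ from \eqref{eq:aux-ic} shows $p_t(\mbf{x},t)=0$ for every $\mbf{x}$; hence $\mbf{x}\mapsto p_t(\mbf{x},t)$ is the zero function and so is its gradient, $\nabla p_t(\cdot,t)=0$. Expanding once more, $\nabla p(\cdot,t+\dt)=\nabla p(\cdot,t)+\dt\,\nabla p_t(\cdot,t)+\mathcal{O}(\dt^2)=\mathcal{O}(\dt^2)$, which is \eqref{aux-p}.

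The only step with real content is the derivation of \eqref{eq:aux-pressure-eqn}: one must keep careful track of the $\veps^2$ weights and of the $\Pi$-dependent flux terms, and the cancellation that leaves the clean coefficient $p+(\gamma-1)\Pi$ works precisely because $p_\infty$ is spatially constant, so that $\nabla\Pi=\veps^2\nabla p$. The remaining ingredient — smoothness of the solution of \eqref{eq:aux_sys} over one short step, needed for the Taylor expansions and to interchange $\partial_t$ with $\nabla$ — is unproblematic here, since the auxiliary system is integrated over a single increment $\dt$.
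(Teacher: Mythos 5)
Your proof is correct and follows essentially the same route as the paper: the divergence bound comes from a first-order Taylor expansion of $\mbf{u}$, and the pressure bound from the non-conservative pressure equation $p_t+\nabla\cdot(p\mbf{u})+(\gamma-1)\Pi\,\nabla\cdot\mbf{u}=0$ (your \eqref{eq:aux-pressure-eqn} is just this with the product rule applied), whose right-hand side vanishes at time $t$ under \eqref{eq:aux-ic}, so that the $\mathcal{O}(\dt)$ term drops out of the expansion of $\nabla p$. The only difference is cosmetic: you derive that pressure equation explicitly via the kinetic-energy balance (a detail the paper simply asserts), and you Taylor-expand $\nabla p$ after noting $p_t(\cdot,t)\equiv 0$, whereas the paper expands $p$ and then takes the gradient.
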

\begin{proof}
  Since
  ${\mbf{u}}(\mbf{x},t+\dt)={\mbf{u}}(\mbf{x},t)+\mathcal{O}(\dt)$,
  the relation \eqref{aux-div} follows very easily. To establish
  \eqref{aux-p}, we write the non-conservation form of the energy
  equation in \eqref{eq:aux_sys}, i.e.\
  \begin{equation}
    \label{eq:aux-engy}
    {p}_t+\nabla\cdot({p}{\mbf{u}})+(\gamma-1)\Pi\nabla\cdot\mbf{u}=0.
  \end{equation}
  Therefore,
  \begin{align}
    {p}(\mbf{x},t+\dt)&={p}(\mbf{x},t)+\dt{p}_t(\mbf{x},t)
    +\mathcal{O}\left(\dt^2\right),\nonumber\\
    &={p}(\mbf{x},t)-\dt\left\{
      \nabla\cdot({p}{\mbf{u}})
      +(\gamma-1){\Pi}\nabla\cdot{\mbf{u}}\right\}(\mbf{x},t)
    +\mathcal{O}\left(\dt^2\right).\label{aux_p_update}
  \end{align}
  The terms in the curly braces vanish due the hypotheses
  \eqref{eq:aux-ic} and the relation \eqref{aux-p} follows by taking
  gradient.
\end{proof}



\section{Asymptotic Preserving Time Discretization}
\label{sec:time-discr}

In this section we present the asymptotic preserving time
discretization of the Euler equations
\eqref{eq:euler_system}-\eqref{eq:U-F}. Let
$0=t^0<t^1<\cdots<t^n<\cdots$ be an increasing sequence of times
with uniform timestep $\dt = t^{n+1}-t^n$. We shall denote by
$f^n(\mbf{x})$, the value of any component of the approximate
solution at time $t^n$.

\subsection{A First Order Semi-implicit Scheme }
\label{sec:first-order}

The canonical first order semi-implicit time discretization is
\begin{equation}
  U^{n+1}:= U^n - \dt \, \nabla \cdot \hat{F} \left( U^n \right)
  - \dt \, \nabla \cdot \tilde{F} \left( U^{n+1} \right),
  \label{first_update}
\end{equation}
where the non-stiff fluxes are treated explicitly and the stiff
fluxes implicitly. Equivalently, this may be rewritten as a two
step scheme,
\begin{align}
  \hat{U}&:= U^n - \dt \, \nabla \cdot \hat{F} \left( U^n \right),
  \label{eq:update_Uhat} \\ 
  U^{n+1}
  &=\hat{U}-\dt\, \nabla \cdot \tilde{F} \left( U^{n+1} \right).
 \label{eq:update_Utilde} 
\end{align}
Using \eqref{eq:F_split} we write \eqref{eq:update_Uhat} -
\eqref{eq:update_Utilde} in componentwise form
\mm{
  \rhohat
 &=
   \rho^n - \dt \, \nabla \cdot (\rho \mbf{u})^n
\label{eq:update_rhohat}\\
  \momhat
 &=
  (\rho \mbf{u})^n
  - \dt \, \nabla \cdot (\rho \mbf{u} \otimes \mbf{u} + p \, \Id)^n
\label{eq:update_momhat} \\
  \enhat
 &=
  (\en)^n - \dt \, \nabla \cdot
  \left(
   (\en+\Pi)^n \mbf{u}^{n}
  \right)
\label{eq:update_enhat}. }
and, using  \eqref{eq:p-Pi},
\mm{
  \rho^{n+1}
 &=
  \rhohat
\label{eq:update_mass} \\
  (\rho \mbf{u})^{n+1}
 &=
  \momhat
  - \frac{1-\veps^2}{\veps^2} \, \dt \, \nabla \cdot (p^{n+1}  \, \Id)
\label{eq:update_mom} \\
  (\en)^{n+1}
 &=
  \enhat
  - (1-\veps^2) \, \dt \, \nabla \cdot
  \left(
   (p-p_\infty) \mbf{u}
  \right)^{n+1}.
\label{eq:update_en}}

\subsection{Elliptic Pressure Equation}
\label{sec:elliptic_equation}

In this section we start from the semi-implicit, semi-discrete
scheme \eqref{eq:update_rhohat} - \eqref{eq:update_en} and derive
a nonlinear elliptic equation for the pressure. This is motivated
by \cite{cordier}. We approximate the pressure equation
numerically by an iteration scheme, which solves a linearized
elliptic equation in each step.

Using the notation $\uhat := \momhat / \rhohat $ and dividing
\eqref{eq:update_mom} by $\rhohat$, we obtain
\mm{
  \mbf{u}^{n+1}
 &=
  \uhat
    - \frac{1-\veps^2}{\veps^2} \, \frac{\dt}{\rhohat} \, \nabla p^{n+1}.
}
Plugging this into \eqref{eq:update_en} we obtain
\mm{
  (\en)^{n+1}
 &=
  \enhat
  - (1-\veps^2) \, \dt \, \nabla \cdot
  \left(
   (p-p_\infty)^{n+1} \uhat
  \right)
\nonumber \\ &
  + \frac{(1-\veps^2)^2}{\veps^2} \, \dt^2 \, \nabla \cdot
  \left(
   \frac{(p-p_\infty)^{n+1}}{\rhohat}
   \, \nabla p^{n+1})
  \right).
\label{eq:elliptic_1}}
To eliminate the remaining term containing $\mbf{u}^{n+1}$ from
$\en^{n+1}$, we compute
\mm{
 & \quad
  (\en)^{n+1} - \enhat
\nonumber \\ &=
  \frac{p^{n+1}-\phat}{\gamma-1} + \frac{\veps^2 \rhohat}2
  \left(
   \|\mbf{u}^{n+1}\|^2 - \|\uhat\|^2
  \right)
\nonumber \\ &=
  \frac{p^{n+1}-\phat}{\gamma-1} + \frac{\veps^2 \rhohat}2
  \left(
   - 2 \frac{1-\veps^2}{\veps^2} \, \frac{\dt}{\rhohat}
   \, \uhat \cdot \nabla p^{n+1}
   + \frac{(1-\veps^2)^2}{\veps^4} \, \frac{\dt^2}{\rhohat^2}
   \, \|\nabla p^{n+1}\|^2
  \right)
\nonumber \\ &=
  \frac{p^{n+1}-\phat}{\gamma-1} -
   (1-\veps^2) \, \dt
   \, \uhat \cdot \nabla p^{n+1}
   + \frac{(1-\veps^2)^2}{2\veps^2} \, \frac{\dt^2}{\rhohat}
   \, \|\nabla p^{n+1}\|^2
\label{eq:elliptic_2} }
Using \eqref{eq:elliptic_2}, we can rearrange
\eqref{eq:elliptic_1} and obtain the following
\begin{lemma}\label{lemma:IMEX_FirstOrder}
The first order IMEX time discretization consists of the explicit,
non-stiff system \eqref{eq:update_rhohat} -
\eqref{eq:update_enhat}, the implicit mass and momentum equations
\eqref{eq:update_mass} - \eqref{eq:update_mom}, and the nonlinear
elliptic pressure equation
\mm{
 & \quad
  - \frac{(1-\veps^2)^2}{\veps^2} \, \dt^2 \, \nabla \cdot
  \left(
   \frac{(p-p_\infty)^{n+1}}{\rhohat}
   \, \nabla p^{n+1})
  \right)
  + \frac{p^{n+1}}{\gamma-1}
\nonumber \\ &=
  - \frac{(1-\veps^2)^2}{2\veps^2} \, \frac{\dt^2}{\rhohat}
  \, \|\nabla p^{n+1}\|^2
  - (1-\veps^2) \, \dt
   (p-p_\infty)^{n+1} \, \nabla \cdot \uhat
  + \frac{\phat}{\gamma-1}
\label{eq:elliptic_pressure}}
\end{lemma}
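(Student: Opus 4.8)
The plan is to derive \eqref{eq:elliptic_pressure} by pure algebraic manipulation, eliminating $\mbf{u}^{n+1}$ and $(\en)^{n+1}$ from the implicit energy update \eqref{eq:update_en} so that the only unknown left is $p^{n+1}$. All the ingredients are already assembled in the excerpt, so the proof is essentially a matter of combining them carefully; I would present it as a short computation rather than invoking any deep structure.

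First I would recall the two expressions for $(\en)^{n+1}-\enhat$ that have already been established. On the one hand, substituting $\mbf{u}^{n+1} = \uhat - \tfrac{1-\veps^2}{\veps^2}\tfrac{\dt}{\rhohat}\nabla p^{n+1}$ into the implicit energy equation \eqref{eq:update_en} gives the divergence-form identity \eqref{eq:elliptic_1}, namely
\mmn{
 (\en)^{n+1} - \enhat
 =
  - (1-\veps^2)\,\dt\,\nabla\cdot\!\big((p-p_\infty)^{n+1}\uhat\big)
  + \tfrac{(1-\veps^2)^2}{\veps^2}\,\dt^2\,\nabla\cdot\!\Big(\tfrac{(p-p_\infty)^{n+1}}{\rhohat}\nabla p^{n+1}\Big).
}
On the other hand, using the equation of state \eqref{eq:state_ndim} together with $\rho^{n+1}=\rhohat$ from \eqref{eq:update_mass} and the same substitution for $\mbf{u}^{n+1}$, one obtains the pointwise identity \eqref{eq:elliptic_2},
\mmn{
 (\en)^{n+1} - \enhat
 =
  \tfrac{p^{n+1}-\phat}{\gamma-1}
  - (1-\veps^2)\,\dt\,\uhat\cdot\nabla p^{n+1}
  + \tfrac{(1-\veps^2)^2}{2\veps^2}\,\tfrac{\dt^2}{\rhohat}\,\|\nabla p^{n+1}\|^2.
}

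Next I would equate these two expressions for $(\en)^{n+1}-\enhat$. The term $\tfrac{(1-\veps^2)^2}{\veps^2}\dt^2\,\nabla\cdot(\cdots\nabla p^{n+1})$ appears on the divergence side and is moved to the left; the term $\tfrac{p^{n+1}}{\gamma-1}$ from the pointwise side is also moved to the left, and $\tfrac{\phat}{\gamma-1}$ and the quadratic term $\tfrac{(1-\veps^2)^2}{2\veps^2}\tfrac{\dt^2}{\rhohat}\|\nabla p^{n+1}\|^2$ are moved to the right. The only point requiring a line of algebra is the first-order transport term: expanding the divergence $\nabla\cdot\big((p-p_\infty)^{n+1}\uhat\big) = \uhat\cdot\nabla(p-p_\infty)^{n+1} + (p-p_\infty)^{n+1}\nabla\cdot\uhat$ and using that $p_\infty(t)$ is spatially constant so $\nabla(p-p_\infty)^{n+1} = \nabla p^{n+1}$, the $-(1-\veps^2)\dt\,\uhat\cdot\nabla p^{n+1}$ contributions on the two sides cancel exactly, leaving precisely the advection term $-(1-\veps^2)\dt\,(p-p_\infty)^{n+1}\nabla\cdot\uhat$ on the right-hand side of \eqref{eq:elliptic_pressure}. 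Collecting terms yields the claimed equation, and the explicit half of the scheme, \eqref{eq:update_rhohat}–\eqref{eq:update_enhat}, together with \eqref{eq:update_mass}–\eqref{eq:update_mom}, is just a restatement of what was derived in Section~\ref{sec:first-order}, so nothing further is needed there.

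There is no real obstacle here; the ``hard part'' is merely bookkeeping — keeping track of the powers of $\veps$, $\dt$, and $\rhohat$ and making sure the cancellation of the $\uhat\cdot\nabla p^{n+1}$ terms is done correctly, which hinges on the spatial constancy of $p_\infty$. I would also remark in passing that the sign of the leading elliptic coefficient, $\tfrac{(1-\veps^2)^2}{\veps^2}\dt^2\,\tfrac{(p-p_\infty)^{n+1}}{\rhohat}$, is nonnegative precisely because of the gauge choice \eqref{eq:p_infty_def}, which guarantees $p-p_\infty\ge 0$; this is what makes \eqref{eq:elliptic_pressure} a genuine (nonlinear) elliptic equation for $p^{n+1}$ and justifies the terminology, but it is not needed for the statement of the lemma itself.
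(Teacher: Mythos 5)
Your proposal is correct and follows essentially the same route as the paper: substitute the velocity update $\mbf{u}^{n+1}=\uhat-\tfrac{1-\veps^2}{\veps^2}\tfrac{\dt}{\rhohat}\nabla p^{n+1}$ into the implicit energy equation to get \eqref{eq:elliptic_1}, use the equation of state with $\rho^{n+1}=\rhohat$ to get \eqref{eq:elliptic_2}, and equate the two expressions for $(\en)^{n+1}-\enhat$, where expanding $\nabla\cdot\big((p-p_\infty)^{n+1}\uhat\big)$ with $\nabla p_\infty^{n+1}=0$ cancels the $\uhat\cdot\nabla p^{n+1}$ terms and yields \eqref{eq:elliptic_pressure}. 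Your closing remark on ellipticity via the gauge choice \eqref{eq:p_infty_def} matches the paper's subsequent discussion and is correctly flagged as not needed for the lemma itself.
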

\begin{remark}
(i) Note that for $\veps=1$, there is no stiff update, since \eqref{eq:update_mass} - \eqref{eq:update_mom} and \eqref{eq:elliptic_pressure} collapse to
$U^{n+1} = \hat U$.

(ii) Ellipticity degenerates at points where $p$ assumes its
infimum.

(iii) The leading order term is nonlinear.

(iv) The term $p_\infty$ is nonlocal.
\end{remark}
Several of these problems can be avoided by linearization. We
approximate the pressure update $p^{n+1}$ by a sequence
$\left(p_k\right)_{k\in\mathbb{N}}$: $p_1 := \phat$, and given
$p_k$, $p = p_{k+1}$ solves the linearized elliptic equation
\mm{
 & \quad
  - \frac{(1-\veps^2)^2}{\veps^2} \, \dt^2 \, \nabla \cdot
  \left(
   \frac{(p-p_\infty)_{k}}{\rhohat}
   \, \nabla p_{k+1}
  \right)
  + \frac{p_{k+1}}{\gamma-1}
\nonumber \\ &=
  - \frac{(1-\veps^2)^2}{2\veps^2} \, \frac{\dt^2}{\rhohat}
  \, \|\nabla p_{k+1}\|^2
  - (1-\veps^2) \, \dt
   (p-p_\infty)_{k} \, \nabla \cdot \uhat
  + \frac{\phat}{\gamma-1}
\label{eq:elliptic_5}}
The leading order term is now linear, and the term $p^\infty_k$ is
now a constant and hence local. Ellipticity still degenerates at
points where $p_k$ assumes its infimum, but we can leave this
slight degeneracy to the linear algebra solver. We also have the
option to enforce uniform ellipticity by lowering $(p_\infty)_{k}$
slightly.

We measure the convergence in the iterative scheme by computing the distance of $p_k$ and $p_{k+1}$ either in the $W^{1,1}$ norm, or in the weighted $H_1$ norm
\mm{
  \|| p_{k+1} \|| :=
  \frac{1-\veps^2}{\veps} \, \dt \,
  \bigg( \int_\Omega
 \left(
  \frac{|p_{k+1}|^2}{\gamma-1} + \frac{(p-p_\infty)_{k}}{\rhohat}
  \, \|\nabla p_{k+1}\|^2
 \right)
  dx \bigg)^{1/2}
\label{eq:elliptic_6}}
This norm arises naturally when trying to prove that the iterative
scheme is a contraction. We display the contraction constants of
the iteration in the numerical experiments in
Section~\ref{sec:numerics}.



\subsection{Asymptotic Preserving Property}
\label{sec:ap}
We now show that the scheme
\eqref{eq:update_rhohat}-\eqref{eq:update_mom},
\eqref{eq:elliptic_pressure} possesses the AP property, in the
sense that it leads to a discrete version of the limit equations
\eqref{mass0}-\eqref{div0} as $\veps\to0$. The proof of the AP
property uses an asymptotic analysis as in the continuous case;
see also \cite{cordier,degond}. Let us consider the asymptotic
expansions
\begin{align}
  \rho^n(\mbf{x})
 &=
  \rho^{n,(0)}(\mbf{x})+\veps\rho^{n,(1)}(\mbf{x})
  +\veps^2\rho^{n,(2)}(\mbf{x})
  + {\mathcal O} (\veps^3),
\label{rho_asym} \\
  \mbf{u}^n(\mbf{x})
 &=
  \mbf{u}^{n,(0)}(\mbf{x})+\veps\mbf{u}^{n,(1)}(\mbf{x})
  +\veps^2\mbf{u}^{n,(2)}(\mbf{x})
  + {\mathcal O} (\veps^3),
\label{u_asym} \\
  p^n(\mbf{x})
  &=
  p^{n,(0)}(\mbf{x})+\veps p^{n,(1)}(\mbf{x})
  +\veps^2p^{n,(2)}(\mbf{x})
  + {\mathcal O} (\veps^3).
\label{p_asym}
\end{align}

The total energy at time $t^n$ can be expanded as
\begin{align}
  (\en)^n
  &=
  \frac{p^n}{\gamma - 1} + \frac{\veps^2 \rho^n}2 |u^n|^2
  \\&=
  \frac {p^{n,(0)}} {\gamma - 1}
  + \veps \frac {p^{n,(1)}} {\gamma - 1}
  + \veps^2
  \left(
    \frac {p^{n,(2)}} {\gamma - 1}
    + \frac {\rho^{n,(0)}} 2 |u^{n,(0)}|^2
  \right)
  + {\mathcal O} (\veps^3)
  \\&=: (\en)^{n,(0)} + \veps (\en)^{n,(1)} + \veps^2 (\en)^{n,(2)}
  + {\mathcal O} (\veps^3)
  \label{eq:en_expansion}
\end{align}

The auxiliary pressure $\Pi^n$ has to satisfy
\begin{align}
  \Pi^n
  &=
  p_\infty^n + \veps^2 ( p^n - p_\infty^n )
  \\&=
  p_\infty^{n,(0)}
  + \veps p_\infty^{n,(1)}
  + \veps^2
  \left(
    p_\infty^{n,(2)} + p^{n,(0)} - p_\infty^{n,(0)}
  \right)
  + {\mathcal O} (\veps^3)
  \\&=:
  \Pi^{n,(0)}
  + \veps \Pi^{n,(1)}
  + \veps^2 \Pi^{n,(2)}
  + {\mathcal O} (\veps^3)
  \label{eq:Pi_expansion}
\end{align}
Hence, the non-stiff flux $\hat F$ (see \eqref{eq:vars}) can be
expanded as
\begin{align}
  \hat F^n
 &=
  \begin{pmatrix}
    \rho^n\mbf{u}^n \\
    \rho^n \mbf{u}^n \otimes \mbf{u}^n +p^n \, \Id \\
    (\rho E^n + \Pi^n) \mbf{u}^n
   \end{pmatrix},
 \\ &=
  \begin{pmatrix}
    \rho^{n,(0)} \mbf{u}^{n,(0)} \\
    \rho^{n,(0)} \mbf{u}^{n,(0)}\otimes\mbf{u}^{n,(0)} + p^{n,(0)} \, \Id \\
    ((\en)^{n,(0)}+\Pi^{n,(0)})\mbf{u}^{n,(0)}
  \end{pmatrix}
  + {\mathcal O} (\veps)
  \\[0.5ex]&=
  \begin{pmatrix}
    \rho^{n,(0)} \mbf{u}^{n,(0)} \\
    \rho^{n,(0)} \mbf{u}^{n,(0)}\otimes\mbf{u}^{n,(0)} + p^{n,(0)} \, \Id \\
    \left(
      \frac {p^{n,(0)}} {\gamma - 1}
      + p_\infty^{n,(0)}
    \right)
    \mbf{u}^{n,(0)}
  \end{pmatrix}
  + {\mathcal O} (\veps)
  \\&=:
  \hat F^{n,(0)}
  + {\mathcal O} (\veps)
  \label{eq:Fhat_expansion}
\end{align}
Analogously, the stiff flux $\tilde F$ can be expanded as
\begin{align}
  \tilde F^{n+1}
  &=
  \begin{pmatrix}
    0 \\[0.5ex]
    \frac{(1 - \veps^2)}{\veps^2} \, p^{n+1} \, \Id \\[0.5ex]
    (1-\veps^2) (p^{n+1}-p_{\infty}^{n+1})\ub^{n+1}
  \end{pmatrix}
  \nonumber\\[0.5ex] &=
  \veps^{-2}
  \begin{pmatrix}
    0 \\
    p^{n+1,(0)} \, \Id \\
    0
  \end{pmatrix}
  + \veps^{-1}
  \begin{pmatrix}
    0 \\
    p^{n+1,(1)} \, \Id \\
    0
  \end{pmatrix}
  \nonumber \\[0.5ex] &
  + \veps^0
  \begin{pmatrix}
    0 \\
    \left(
      p^{n+1,(2)} \, - p^{n+1,(0)}
    \right)
    \, \Id \\
    (p^{n+1,(0)}-p_\infty^{n+1,(0)})\ub^{n+1}
  \end{pmatrix}
  + {\mathcal O} (\veps)
  \nonumber \\[0.5ex] &=:
  \veps^{-2} \tilde F^{n+1,(-2)}
  + \veps^{-1} \tilde F^{n+1,(-1)}
  + \tilde F^{n+1,(0)}
  + {\mathcal O} (\veps).
  \label{eq:Ftilde_expansion}
\end{align}
We summarize this in the following lemma:
\begin{lemma}
As $\veps\to0$, the scheme
\eqref{eq:update_rhohat}-\eqref{eq:update_en} is consistent with
\begin{align}
  \begin{pmatrix}
  \rho^{n+1,(0)} \\
  (\rho \mbf{u})^{n+1,(0)} \\
  \frac{p^{n+1,(0)}}{\gamma - 1}
 \end{pmatrix}
 &=
  \begin{pmatrix}
  \rho^{n,(0)} \\
  (\rho \mbf{u})^{n,(0)} \\
  \frac{p^{n,(0)}}{\gamma - 1}
 \end{pmatrix}
  - \dt \nabla \cdot \left\{
  \begin{pmatrix}
    \rho^{n,(0)} \mbf{u}^{n,(0)} \\
    \rho^{n,(0)} \mbf{u}^{n,(0)}\otimes\mbf{u}^{n,(0)} + p^{n,(0)} \, \Id \\
    \left(
      \frac {p^{n,(0)}} {\gamma - 1}
      + p_\infty^{n,(0)}
    \right)
    \mbf{u}^{n,(0)}
  \end{pmatrix}
  \right.
 \nonumber \\ & \quad \left. +
  \veps^{-2}
  \begin{pmatrix}
    0 \\
    p^{n+1,(0)} \, \Id \\
    0
  \end{pmatrix}
  + \veps^{-1}
  \begin{pmatrix}
    0 \\
    p^{n+1,(1)} \, \Id \\
    0
  \end{pmatrix} \right.
  \nonumber \\[0.5ex] &
  + \left.
  \begin{pmatrix}
    0 \\
    \left(
      p^{n+1,(2)} \, - p^{n+1,(0)}
    \right)
    \, \Id \\
    (p^{n+1,(0)}-p_\infty^{n+1,(0)})\ub^{n+1,(0)}
  \end{pmatrix}
  \right\} +
   {\mathcal O} (\veps).
 \label{eq:ap_1}
\end{align}
\end{lemma}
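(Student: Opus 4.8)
The plan is to prove the lemma by substituting the asymptotic ansatz \eqref{rho_asym}--\eqref{p_asym} directly into the semi-implicit update \eqref{first_update}, equivalently into its componentwise form \eqref{eq:update_rhohat}--\eqref{eq:update_en}, and then collecting powers of $\veps$. Most of the bookkeeping has in fact already been carried out above: the energy expansion \eqref{eq:en_expansion} identifies the energy component of the leading-order conserved vector as $p^{n,(0)}/(\gamma-1)$, the expansion \eqref{eq:Fhat_expansion} gives $\hat F^n = \hat F^{n,(0)} + \mathcal O(\veps)$ with the displayed leading-order flux, and \eqref{eq:Ftilde_expansion} gives the stiff flux as $\veps^{-2}\tilde F^{n+1,(-2)} + \veps^{-1}\tilde F^{n+1,(-1)} + \tilde F^{n+1,(0)} + \mathcal O(\veps)$. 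So the proof reduces to assembling these three expansions inside \eqref{first_update}.

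Concretely, on the left of \eqref{first_update} write $U^{n+1} = U^{n+1,(0)} + \mathcal O(\veps)$, where by \eqref{eq:en_expansion} the components of $U^{n+1,(0)}$ are $\rho^{n+1,(0)}$, $(\rho\mbf u)^{n+1,(0)}$ and $p^{n+1,(0)}/(\gamma-1)$, and do the same for $U^n$ on the right. Insert \eqref{eq:Fhat_expansion} for $\dt\,\nabla\cdot\hat F(U^n)$ and \eqref{eq:Ftilde_expansion} for $\dt\,\nabla\cdot\tilde F(U^{n+1})$, noting that in the $\veps^0$ block of the latter one may replace $\ub^{n+1}$ by $\ub^{n+1,(0)}$ at the price of an $\mathcal O(\veps)$ error. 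Retaining every contribution down to and including order $\veps^0$ — which, owing to the stiffness of $\tilde F$, still contains the singular pressure terms of orders $\veps^{-2}$ and $\veps^{-1}$ — and absorbing everything of order $\veps$ and higher into a single remainder yields exactly \eqref{eq:ap_1}.

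The point that deserves a word of care, rather than a genuine obstacle, is the uniformity of the remainder: one must know that the expansions \eqref{rho_asym}--\eqref{p_asym} survive the nonlinear operations built into $\hat F$ and $\tilde F$ (products of the conserved quantities and, through the equation of state, the relation between $\en$ and $p$), for which it suffices that $\rho^{n,(0)}$ is bounded away from zero and that the fields are smooth; under these standing assumptions the collected remainder is genuinely $\mathcal O(\veps)$. One should also record which expansion $p_\infty^n$ carries, namely $p_\infty^n = p_\infty^{n,(0)} + \veps p_\infty^{n,(1)} + \mathcal O(\veps^2)$ as in \eqref{eq:Pi_expansion}, since both the leading-order energy flux in \eqref{eq:Fhat_expansion} and the $\veps^0$ block of \eqref{eq:Ftilde_expansion} involve $p_\infty^{(0)}$.

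Finally, I would stress that \eqref{eq:ap_1} is not yet a closed leading-order system: the terms $\veps^{-2}\nabla\cdot(p^{n+1,(0)}\,\Id)$ and $\veps^{-1}\nabla\cdot(p^{n+1,(1)}\,\Id)$ are kept on purpose. The actual extraction of the incompressible limit — matching these singular terms against the regular left-hand side and the regular first term on the right to conclude that $p^{n+1,(0)}$ and $p^{n+1,(1)}$ are spatially constant, and then reading off the discrete analogues of \eqref{mass0}--\eqref{div0} — is the content of the order-by-order analysis that follows and is not needed here. Thus the proof of the lemma itself is a substitution-and-collect argument resting on the three expansions \eqref{eq:en_expansion}, \eqref{eq:Fhat_expansion} and \eqref{eq:Ftilde_expansion} already in hand.
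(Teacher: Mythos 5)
Your proposal is correct and follows essentially the paper's own route: the paper proves the lemma precisely by the preceding expansions \eqref{eq:en_expansion}, \eqref{eq:Fhat_expansion} and \eqref{eq:Ftilde_expansion}, which it then "summarizes" by substituting into \eqref{first_update} and collecting powers of $\veps$, exactly as you do. Your added remarks on the uniformity of the $\mathcal O(\veps)$ remainder and on the expansion carried by $p_\infty^n$ are sensible refinements but do not change the argument.
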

Now we assemble the asymptotic numerical scheme. To the two
leading orders the expansion of the momentum equation yields
spatially constant pressures
\begin{align}
 p^{n+1,(0)}(x) \equiv p^{n+1,(0)}, \quad
 p^{n+1,(1)}(x) \equiv p^{n+1,(1)}.
\end{align}
As in \cite{klein}, we absorb $\veps p^{n+1,(1)}$ into
$p^{n+1,(0)}$ by assuming that $ p^{n+1,(1)} \equiv 0 $. With this
the expansion of the reference pressure $p_\infty$ simplifies,
\mm{
  p_{\infty}^{n+1}
 &= \inf_{\mbf{x}}p^{n+1}(\mbf{x})
 = p^{n+1,(0)}
  + \veps^2 \inf_{\mbf{x}} p^{n+1,(2)}(x)
  + {\mathcal O}(\veps^3)
\nonumber \\ &=  
  p^{n+1,(0)} + \veps^2 p_{\infty}^{n+1,(2)}(x)
  + {\mathcal O}(\veps^3),
\label{eq:p_infty_sect3}}
and the leading order auxiliary pressure $\Pi$  and energy are constant and given by
\begin{align}
 \Pi^{n+1,(0)} &= p_\infty^{n+1,(0)}
 \\
  \label{eq:rhoE_Pi}
  (\en+\Pi)^{n+1,(0)}& =\frac{\gamma}{\gamma-1} \,
  p^{n+1,(0)}_\infty.
\end{align}
In particular, the divergence of these terms vanishes in the other
equations. Now we assemble the ${\mathcal O} (\veps^0)$ terms of
expansion \eqref{eq:ap_1}:

\begin{lemma}
(i) The leading order terms mass update is given by
\begin{align}
 \rho^{n+1,(0)}
&=
 \rho^{n,(0)}
 - \dt \nabla \cdot \left(\rho^{n,(0)}\mbf{u}^{n,(0)}\right)
\label{mass_d0}
\end{align}
which is a consistent first order time discretization of the mass
equation \eqref{mass0} in the incompressible limit system.

(ii) The leading order momentum update is given by
\begin{align}
 (\rho\mbf{u})^{n+1,(0)}
&=
 (\rho\mbf{u})^{n,(0)}
 - \dt \nabla \cdot
 \left(
 (\rho \mbf{u} \otimes \mbf{u})^{n,(0)} + p^{n+1,(2)} \, \Id
 \right),
\label{mom_d0}
\end{align}
which is consistent with \eqref{mom0}.
\end{lemma}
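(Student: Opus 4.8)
The plan is to read off the $\mathcal{O}(\veps^0)$ balance in the asymptotic identity \eqref{eq:ap_1}, row by row, using only the structural facts already collected above: the two lowest-order pressure coefficients $p^{n+1,(0)}$ and $p^{n+1,(1)}$ are constant in space (and $p^{n+1,(1)}\equiv 0$ after absorption into $p^{n+1,(0)}$), the reference pressure obeys $p_\infty^{n+1,(0)} = p^{n+1,(0)}$ by \eqref{eq:p_infty_sect3}, and the leading-order quantity $(\en+\Pi)^{n+1,(0)}$ is constant in space by \eqref{eq:rhoE_Pi}. No new estimate is required; the argument is pure bookkeeping of powers of $\veps$.

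For part (i): the first (mass) row of \eqref{eq:ap_1} receives no contribution from the stiff flux, since the mass component of $\tilde F$ vanishes identically, nor from the leading-order energy flux (which does not enter the mass row). Dropping the $\mathcal{O}(\veps)$ remainder leaves exactly \eqref{mass_d0}. Interpreting $\rho^{n,(0)},\mathbf{u}^{n,(0)}$ as the time-$t^n$ values of the incompressible density and velocity, this is the explicit (forward) Euler step applied to \eqref{mass0}, hence a first-order consistent discretization.

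For part (ii): I first observe that the genuinely singular contributions to the momentum row of \eqref{eq:ap_1}, namely $\veps^{-2}\,\nabla\cdot\!\bigl(p^{n+1,(0)}\Id\bigr)$ and $\veps^{-1}\,\nabla\cdot\!\bigl(p^{n+1,(1)}\Id\bigr)$, vanish because $p^{n+1,(0)}$ and $p^{n+1,(1)}$ are spatially constant; this is precisely what makes the $\veps\to 0$ limit of the scheme non-singular. What survives at order $\veps^0$ is
\begin{equation*}
(\rho\mathbf{u})^{n+1,(0)} = (\rho\mathbf{u})^{n,(0)} - \dt\,\nabla\cdot\!\left( \rho^{n,(0)}\mathbf{u}^{n,(0)}\otimes\mathbf{u}^{n,(0)} + p^{n,(0)}\Id + \bigl(p^{n+1,(2)} - p^{n+1,(0)}\bigr)\Id \right).
\end{equation*}
Since $p^{n,(0)}$ and $p^{n+1,(0)}$ are constant in space, $\nabla\cdot\bigl(p^{n,(0)}\Id\bigr) = \nabla\cdot\bigl(p^{n+1,(0)}\Id\bigr) = 0$, so the flux in parentheses collapses to $\rho^{n,(0)}\mathbf{u}^{n,(0)}\otimes\mathbf{u}^{n,(0)} + p^{n+1,(2)}\Id$, which is exactly \eqref{mom_d0}. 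Comparing with \eqref{mom0}, this is a consistent first-order (semi-implicit, since $p^{n+1,(2)}$ sits at the new time level) discretization, with $p^{n+1,(2)}$ playing the role of the incompressible pressure Lagrange multiplier.

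The only point that needs genuine attention — and the one I would flag as the main obstacle, though it is really a consistency check rather than a hard estimate — is the cancellation of the $\veps^{-2}$ and $\veps^{-1}$ momentum terms: it rests entirely on the earlier deduction, from the momentum equation at orders $\veps^{-2}$ and $\veps^{-1}$, that the two lowest-order pressure coefficients are spatially homogeneous. Everything else amounts to collecting coefficients; the energy row, which yields the discrete divergence constraint corresponding to \eqref{div0}, is treated separately.
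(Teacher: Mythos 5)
Your proposal is correct and follows essentially the same route as the paper: assembling the $\mathcal{O}(\veps^0)$ terms of \eqref{eq:ap_1}, after noting that the $\veps^{-2}$ and $\veps^{-1}$ momentum contributions drop out because $p^{n+1,(0)}$ and $p^{n+1,(1)}$ are spatially constant (with $p^{(1)}$ absorbed), and that the spatially constant pressure terms $p^{n,(0)}\,\Id$ and $p^{n+1,(0)}\,\Id$ have vanishing divergence, leaving exactly \eqref{mass_d0} and \eqref{mom_d0}. Your explicit flagging of where the spatial homogeneity of the two lowest-order pressures is used matches the paper's (more implicit) bookkeeping; no gap.
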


Next, we study the elliptic pressure equation
\eqref{eq:elliptic_pressure}, which we slightly rearrange as
follows:
\mm{
   & \quad
   \frac1{\gamma-1} \frac{p^{n+1}-p^n}{\dt}
 =
  \frac1{\gamma-1} \frac{\phat-p^n}{\dt}
  - (1-\veps^2) \, (p-p_\infty)^{n+1} \, \nabla \cdot \uhat
\nn &
  + \frac{(1-\veps^2)^2}{\veps^2} \, \dt \, \nabla \cdot
  \left(
   \frac{(p-p_\infty)^{n+1}}{\rhohat}
   \, \nabla p^{n+1}
  \right)
  - \frac{(1-\veps^2)^2}{2\veps^2} \, \frac{\dt}{\rhohat}
  \, \|\nabla p^{n+1}\|^2
 \label{eq:ap_2}
}
The pressure differences on the RHS of \eqref{eq:ap_2} are
\mm{
  (p-p_\infty)^{n+1}
 & = \veps^2 p^{n+1,(2)} + {\mathcal O}(\veps^3)
\label{eq:ap_2a} \\
  \nabla p^{n+1}
 & = \veps^2 \nabla p^{n+1,(2)}
 + {\mathcal O}(\veps^3)
\label{eq:ap_2b}}
Therefore, to leading order \eqref{eq:ap_2} becomes
\mm{
  \frac{p^{n+1,(0)}-p^{n,(0)}}{(\gamma-1)\dt}
 &= \frac{\phat^{(0)}-p^{n,(0)}}{(\gamma-1)\dt}
 \label{eq:ap_3}
}
It remains to expand $\rhohat$, $\phat$ and $\uhat$:
\mm{
  \rhohat^{(0)}
 &= \rho^{n,(0)} - \dt \nabla\cdot
  (\rho^{n,(0)}\ub^{n,(0)})
 \nn
  \uhat^{(0)} &= \ub^{n,(0)} - \dt \nabla\cdot
  (\rho^{n,(0)} \ub^{n,(0)} \otimes \ub^{n,(0)}
   + p^{n,(0)} \, \Id)
}
so
\mm{
  \uhat^{(0)} - \frac{\dt}{\rhohat}\,\nabla p^{n+1,(2)}
 &=
  \ub^{n,(0)} - \dt \nabla\cdot (\rho \ub \otimes \ub)^{n,(0)}
   - \frac{\dt\,\nabla p^{n+1,(2)}}
    {\rho^{n,(0)} - \dt \nabla\cdot
  (\rho\ub)^{n,(0)}}.
}
\mm{
  \frac{\phat^{(0)}-p^{n,(0)}}{(\gamma - 1)\dt}
 &=
  - \frac {\gamma}{\gamma - 1} p_\infty^{n,(0)}
  \, \nabla \cdot \mbf{u}^{n,(0)}
}
Plugging this into \eqref{eq:ap_3} we immediately obtain the
following
\begin{lemma}\label{lemma:ap_pressure}
To leading order, the semi-discrete elliptic pressure equation
\eqref{eq:elliptic_pressure} is given by

\mm{
  \frac{p^{n+1,(0)}-p^{n,(0)}}{\dt}
 &=
  - \gamma \, p^{n,(0)}
  \nabla \cdot \mbf{u}^{n,(0)},
 \label{eq:ap_5}
}
which is a consistent discretization of the divergence
constraint \eqref{div0}.
\end{lemma}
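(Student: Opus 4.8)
The plan is to start from the rearranged elliptic pressure equation \eqref{eq:ap_2} and extract its leading order in $\veps$. First I would carry out an order count on the right-hand side of \eqref{eq:ap_2}. By \eqref{eq:ap_2a} we have $(p-p_\infty)^{n+1}=\mathcal O(\veps^2)$ and by \eqref{eq:ap_2b} we have $\nabla p^{n+1}=\mathcal O(\veps^2)$; hence the second term on the right is $\mathcal O(\veps^2)$, the third term is $\veps^{-2}\cdot\mathcal O(\veps^2)\cdot\mathcal O(\veps^2)=\mathcal O(\veps^2)$, and the fourth (quadratic) term is $\veps^{-2}\cdot\mathcal O(\veps^4)=\mathcal O(\veps^2)$. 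Only the first term survives as $\veps\to0$, which is precisely the reduction already recorded in \eqref{eq:ap_3}.

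It then remains to evaluate $\phat^{(0)}$. Here I would use that $\phat$ is defined from $\enhat$ through the equation of state for the hatted variables, so that $\phat^{(0)}=(\gamma-1)\,\enhat^{(0)}$, and then expand \eqref{eq:update_enhat}. From \eqref{eq:en_expansion} one gets $(\en)^{n,(0)}=p^{n,(0)}/(\gamma-1)$, while \eqref{eq:Pi_expansion} together with \eqref{eq:p_infty_sect3} and the absorption $p^{n,(1)}\equiv 0$ gives $\Pi^{n,(0)}=p_\infty^{n,(0)}=p^{n,(0)}$, hence $(\en+\Pi)^{n,(0)}=\tfrac{\gamma}{\gamma-1}p^{n,(0)}$ as in \eqref{eq:rhoE_Pi}. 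Inserting this into \eqref{eq:update_enhat} and using that the leading-order pressure $p^{n,(0)}$ is spatially constant, so that $\nabla\cdot(p^{n,(0)}\mbf{u}^{n,(0)})=p^{n,(0)}\nabla\cdot\mbf{u}^{n,(0)}$, yields
\[
  \enhat^{(0)}
  =
  \frac{p^{n,(0)}}{\gamma-1}
  - \dt\,\frac{\gamma}{\gamma-1}\,p^{n,(0)}\,\nabla\cdot\mbf{u}^{n,(0)},
\]
which is exactly the identity $\dfrac{\phat^{(0)}-p^{n,(0)}}{(\gamma-1)\dt}=-\dfrac{\gamma}{\gamma-1}\,p_\infty^{n,(0)}\,\nabla\cdot\mbf{u}^{n,(0)}$ stated just before the lemma. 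Substituting this into \eqref{eq:ap_3} and multiplying through by $\gamma-1$ gives \eqref{eq:ap_5}; rewriting it as $\nabla\cdot\mbf{u}^{n,(0)}=-\tfrac{1}{\gamma p^{n,(0)}}\,\tfrac{p^{n+1,(0)}-p^{n,(0)}}{\dt}$ exhibits it as the explicit Euler discretization of the divergence constraint \eqref{div0}, which is the asserted consistency.

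The hard part will be justifying the structural input that $p^{n,(0)}$ is spatially homogeneous, since both the $\mathcal O(\veps^2)$ bookkeeping of the first step and the simplification of the flux divergence in the second step rely on it. This is not self-contained at a single time level and has to be propagated in an inductive fashion: well-prepared initial data (Subsection~\ref{sec:well_prepared}) start from a spatially constant leading-order pressure, the two leading orders of the momentum update \eqref{eq:update_mom} force $p^{n+1,(0)}$ and $p^{n+1,(1)}$ to be $x$-independent, and $\veps p^{n+1,(1)}$ is then absorbed into $p^{n+1,(0)}$ as in \cite{klein}. Once this invariant is available, the remaining manipulations are the routine substitutions indicated above.
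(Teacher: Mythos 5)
Your proposal is correct and follows essentially the same route as the paper: the $\mathcal O(\veps^2)$ bookkeeping on the right-hand side of \eqref{eq:ap_2} (via \eqref{eq:ap_2a}--\eqref{eq:ap_2b}) reduces it to \eqref{eq:ap_3}, and then $\phat^{(0)}$ is evaluated from the hatted energy update \eqref{eq:update_enhat} using $(\en+\Pi)^{n,(0)}=\tfrac{\gamma}{\gamma-1}p^{n,(0)}$ and the spatial constancy of the leading-order pressure, exactly as in the displayed identity preceding the lemma. Your explicit remark that the spatial homogeneity of $p^{n,(0)}$ must be propagated inductively from well-prepared data and the momentum expansion is a useful clarification of a step the paper treats as part of the preceding assembly, but it is not a different argument.
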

%
%
%

Integrating \eqref{eq:ap_5} over the space domain $\Omega$ yields
\begin{equation}
  \label{eq:div_thm}
  \frac{p^{n+1,(0)}-p^{n,(0)}}{\gamma \, p^{n,(0)} \, \dt}
  =
  - \int_{\D\Omega} \mbf{u}^{n,(0)} \cdot \mbf{n} \, d\sigma.
\end{equation}
Under several reasonable boundary conditions, such as periodic,
wall, open, etc.\ the integral on the right hand side vanishes;
see \cite{haack}. This yields $p^{n+1,(0)}=p^{n,(0)}$, i.e.\
$p^{n,(0)}$ is a constant in space and time. This in turn leads to
the pointwise divergence constraint in incompressible flows.

Together, this yields the following theorem:
\begin{theorem}
\label{thm:ap}
  The time-discrete scheme
  \eqref{eq:update_rhohat}-\eqref{eq:update_mom},
\eqref{eq:elliptic_pressure} is asymptotic
  preserving in the following sense: the leading order asymptotic
  expansion of the numerical solution is a consistent approximation of the incompressible Euler
  equations \eqref{mass0}-\eqref{div0}.
\end{theorem}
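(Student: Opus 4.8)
The strategy is to stitch together the lemmas that have already been assembled, so the proof of Theorem~\ref{thm:ap} is essentially a bookkeeping exercise that records which leading-order relations have been established and observes that together they constitute a consistent discretization of \eqref{mass0}--\eqref{div0}. First I would recall that, inserting the asymptotic expansions \eqref{rho_asym}--\eqref{p_asym} into the scheme \eqref{eq:update_rhohat}--\eqref{eq:update_mom}, \eqref{eq:elliptic_pressure} and balancing powers of $\veps$, the $\mathcal{O}(\veps^{-2})$ and $\mathcal{O}(\veps^{-1})$ terms of the momentum update \eqref{eq:update_mom} force $\nabla p^{n+1,(0)} = \nabla p^{n+1,(1)} = 0$, so that $p^{n+1,(0)}$ and $p^{n+1,(1)}$ are spatially constant; absorbing $\veps p^{n+1,(1)}$ into $p^{n+1,(0)}$ via $p^{n+1,(1)}\equiv 0$ (as in \cite{klein}) then gives the simplified expansion \eqref{eq:p_infty_sect3} of $p_\infty^{n+1}$ and the constant leading-order auxiliary pressure and energy \eqref{eq:rhoE_Pi}.

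Next I would collect the three leading-order update equations already derived: the mass update \eqref{mass_d0}, which is a consistent first-order discretization of \eqref{mass0}; the momentum update \eqref{mom_d0}, in which the stiff $\mathcal{O}(\veps^{-2})$ and $\mathcal{O}(\veps^{-1})$ pressure contributions have dropped out because $p^{n+1,(0)}$ and $p^{n+1,(1)}$ are constant, leaving only $\nabla p^{n+1,(2)}$ as the incompressible pressure gradient, consistent with \eqref{mom0}; and the leading-order elliptic pressure equation, Lemma~\ref{lemma:ap_pressure}, equation \eqref{eq:ap_5}, which is a consistent discretization of the divergence constraint \eqref{div0}. Here I would emphasize the role of the reference-pressure choice \eqref{eq:p_infty_def}: it is precisely what makes $(p-p_\infty)^{n+1}=\veps^2 p^{n+1,(2)}+\mathcal{O}(\veps^3)$ and $\nabla p^{n+1}=\veps^2\nabla p^{n+1,(2)}+\mathcal{O}(\veps^3)$ in \eqref{eq:ap_2a}--\eqref{eq:ap_2b}, so that the quadratic term $\|\nabla p^{n+1}\|^2/\rhohat$ and the $\veps^{-2}$-weighted elliptic term in \eqref{eq:ap_2} are $\mathcal{O}(\veps^2)$ and $\mathcal{O}(\veps^2)$ respectively and hence do not pollute the leading-order balance \eqref{eq:ap_3}.

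Finally I would close the argument by integrating \eqref{eq:ap_5} over $\Omega$ to obtain \eqref{eq:div_thm}, noting that for the standard boundary conditions (periodic, solid wall, open) the boundary flux $\int_{\D\Omega}\mbf{u}^{n,(0)}\cdot\mbf{n}\,d\sigma$ vanishes, so that $p^{n+1,(0)}=p^{n,(0)}$; by induction $p^{n,(0)}$ is constant in both space and time, and \eqref{eq:ap_5} then reduces to the pointwise constraint $\nabla\cdot\mbf{u}^{n,(0)}=0$. Together with \eqref{mass_d0} and \eqref{mom_d0} this is exactly a consistent time-discretization of \eqref{mass0}--\eqref{div0}, which is the assertion of the theorem.

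The only genuinely delicate point — and the step I would treat most carefully — is verifying that all the formally ``small'' nonlinear and stiff terms in \eqref{eq:elliptic_pressure} (the $\veps^{-2}\dt^2$ elliptic term, the $\veps^{-2}\dt^2\|\nabla p^{n+1}\|^2$ term, and the $(p-p_\infty)^{n+1}\nabla\cdot\uhat$ term) are indeed $\mathcal{O}(\veps)$ after the cancellations; this is where the gauge condition \eqref{eq:p_infty_def} is essential and where a sign error or a missed order in $\veps$ would break consistency. Everything else is substitution and matching of powers of $\veps$, already carried out in the preceding lemmas, so the proof amounts to quoting Lemmas~\ref{lemma:IMEX_FirstOrder}--\ref{lemma:ap_pressure} and \eqref{eq:div_thm} in sequence.
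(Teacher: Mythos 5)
Your proposal follows the paper's own proof essentially verbatim: expand in powers of $\veps$, conclude from the momentum update that $p^{n+1,(0)},p^{n+1,(1)}$ are spatially constant and set $p^{n+1,(1)}\equiv0$, invoke the leading-order mass and momentum updates \eqref{mass_d0}--\eqref{mom_d0} and Lemma~\ref{lemma:ap_pressure} (whose derivation rests on the smallness relations \eqref{eq:ap_2a}--\eqref{eq:ap_2b} coming from the reference pressure \eqref{eq:p_infty_def}), and close by integrating \eqref{eq:ap_5} to get \eqref{eq:div_thm} and hence the pointwise divergence constraint. This matches the paper's argument, including the correct $\mathcal{O}(\veps^2)$ accounting of the stiff and quadratic pressure terms, so there is nothing to add.
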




\subsection{Second Order Extension}
\label{sec:second-order}
In this section we extend the semi-discrete scheme to second order
accuracy in time. Our approach is along the lines of \cite{park},
where the authors design a second order scheme using a combination
of second order Runge-Kutta and Crank-Nicolson time stepping
strategies.

We begin by discretizing \eqref{first_update} and obtain the
semi-discrete scheme
\begin{align}
  U^{n+\frac{1}{2}}&=U^n-\frac{\dt}{2}{\nabla\cdot\hat{F}(U^n)
    -\frac{\dt}{2}\nabla\cdot\tilde{F}\left(U^{n+\frac{1}{2}}\right)},
  \label{eq:step1}\\
  U^{n+1}&=U^n-\dt\nabla\cdot\hat{F}\left(U^{n+\frac{1}{2}}\right)
  -\frac{\dt}{2}\nabla\cdot\left(\tilde{F}\left(U^n\right)
    +\tilde{F}\left(U^{n+1}\right)\right). \label{eq:step2}
\end{align}
We notice that in the first timestep \eqref{eq:step1}, the
non-stiff flux $\hat{F}$ is treated explicitly and the stiff flux
$\tilde{F}$ is treated implicitly. In the second timestep
\eqref{eq:step2}, the non-stiff flux is treated by the midpoint
rule and the stiff flux by the trapezoidal or Crank-Nicolson rule.

We proceed as follows. First,
the predictor step is carried out exactly as in the first order
scheme, i.e.\
\begin{align}
  \rho^{n+\frac{1}{2}} & = \rho^n-\frac{\dt}{2}\nabla\cdot(\rho\mbf{u})^n,
  \label{mass_update2}\\
  (\rho\mbf{u})^{n+\frac{1}{2}} & =(\rho\mbf{u})^n
  -\frac{\dt}{2}\nabla\cdot(\rho\mbf{u}\otimes\mbf{u}+p \, \Id)^n
  -\frac{\dt}{2}\frac{(1-\veps^2)}{\veps^2}\nabla
  p^{n+\frac{1}{2}},
  \label{mom_update2}\\
  (\en)^{n+\frac{1}{2}} & = (\en)^n-\frac{\dt}{2}\nabla\cdot\left((\en+\Pi)^n
    \mbf{u}^{n}\right)
  -\frac{\dt}{2}(1-\veps^2)\nabla\cdot((p-p_\infty)\mbf{u})^{n+\frac{1}{2}}
   \label{energy_update2}.
\end{align}

The corrector step is,
\begin{align}
  \rhohat &= \rho^n-\dt\nabla\cdot(\rho\mbf{u})^{n+\frac{1}{2}}, \\
  \uhat   & = \frac{1}{\rhohat} \left\{
   (\rho\mbf{u})^n-\dt\nabla\cdot(\rho\mbf{u}\otimes\mbf{u}
  +p \Id)^{n+\frac{1}{2}} \right\}
\\
  \enhat  & = (\en)^n-\dt\nabla\cdot\left\{(\en+\Pi)^{n+\frac{1}{2}}
    \mbf{u}^{n+\frac{1}{2}}\right\}\\
  \rho^{n+1}&= \rhohat,
  \label{mass_update3} \\
  (\rho\mbf{u})^{n+1}&= \rhohat\uhat-\frac{\dt}{2}\frac{(1-\veps^2)}{\veps^2}\nabla
  \left(p^n+p^{n+1}\right),
  \label{mom_update3}\\
  (\en)^{n+1}&= \enhat -\frac{\dt}{2}(1-\veps^2)\nabla\cdot
  \left\{((p-p_\infty)\mbf{u})^n+((p-p_\infty)\mbf{u})^{n+1}\right\}.
  \label{energy_update3}
\end{align}

As in Section \ref{sec:elliptic_equation}, we obtain the pressure equation for the corrector step. We divide the momentum equation (\ref{mom_update3}) by the explicitly known $\rhohat$ to get the velocity update, that we plug in the energy equation (\ref{energy_update3}). Rewriting $(\en)^{n+1}$ with the state equation (\ref{eq:state_ndim}) we get
\begin{align}
  &\quad
   \frac{p^{n+1}}{\gamma-1}
\nonumber \\ & =
  \enhat - \frac{\veps^2}{2} \rho^{n+1} \norm{\mbf{u}^{n+1}}^2
   -\frac{\dt}{2}(1-\veps^2)\nabla\cdot
     \left\{((p-p_\infty)\mbf{u})^n+((p-p_\infty)\mbf{u})^{n+1}\right\}
\nonumber \\
  \label{eq:ell_sec}
  & = \enhat
\nn &-
  \frac{\veps^2}{2} \rho^{n+1} \bigg\{
    \norm{\uhat}^2 - \frac{\dt}{\rhohat}
    \frac{1-\veps^2}{\veps^2} \uhat \cdot \nabla (p^n + p^{n+1})
   + \frac{\dt^2}{4 \rhohat^2} \frac{(1-\veps^2)^2}{\veps^4}
    \norm{\nabla (p^n + p^{n+1})}^2\bigg\}
  \\ & \quad
    - \frac{\dt}{2}(1-\veps^2)\nabla\cdot \bigg\{((p-p_\infty)\mbf{u})^n
   +(p-p_\infty)\left( \uhat
    - \frac{\dt}{2 \rhohat} \frac{1-\veps^2}{\veps^2}\nabla (p^n + p^{n+1})
   \right) \bigg\} \nonumber,
\end{align}
that is equivalent to
\begin{align}
 &\quad
   \frac{p^n+p^{n+1}}{\gamma-1} - \left( \frac{\dt (1-\veps^2)}{2 \veps} \right)^2
   \nabla\cdot
   \Big(\frac{(p-p_{\infty})^{n+1}}{\rhohat} \nabla (p^n+p^{n+1})\Big)
\nonumber \\
  &= \frac{\phat + p^n}{\gamma-1} - \left(\frac{\dt (1-\veps^2)}{2\veps}\right)^2
     \frac{1}{2\rhohat}\norm{\nabla(p^n+p^{n+1})}^2 \label{eq:p_sec}
 \\&\quad
  - \frac{\dt}{2}(1-\veps^2) \left\{(\mbf{u}^n-\uhat)\cdot\nabla p^n
   + (p-p_{\infty})^n \nabla \cdot \mbf{u}^n
    + (p-p_{\infty})^{n+1} \nabla\cdot \uhat \right\}
 \nonumber.
\end{align}

The derived pressure equation (\ref{eq:p_sec}) is solved by the fixed point iteration
\begin{align}
    & \quad \frac{p^n+p_{k+1}}{\gamma-1} - \left( \frac{\dt (1-\veps^2)}{2 \veps} \right)^2 \nabla\cdot
    \left(\frac{(p-p_{\infty})_{k}}{\rhohat} \nabla (p^n+p_{k+1})\right) \nonumber \\
    &= \frac{\phat+p^n}{\gamma-1} - \left(\frac{\dt (1-\veps^2)}{2\veps}\right)^2 \frac{1}{2\rhohat}\norm{\nabla(p^n+p_k)}^2 \label{eq:fp_sec}\\
    &- \frac{\dt}{2}(1-\veps^2) \left\{ (\mbf{u}^n-\uhat)\cdot\nabla p^n + (p-p_{\infty})^n \nabla \cdot \mbf{u}^n + (p-p_{\infty})_k \nabla\cdot \uhat \right\} \nonumber
\end{align}
with initial value
\begin{equation}
    p_0:=\phat = (\gamma-1)(\enhat - \frac{\veps^2}{2}\rhohat\norm{\uhat}^2)
\end{equation}

Analogously to Section \ref{sec:ap} we show the asymptotic preserving property of the second order scheme (\ref{eq:step1}), (\ref{eq:step2}). 
Let us begin with the equations (\ref{mass_update3}),(\ref{mom_update3})
\begin{align}
 \label{eq:ap21}
   \rho^{n+1,(0)}           
& =
   \rhohat^{(0)} = \rho^{n,(0)} - \dt \nabla \cdot (\rho \mbf{u})^{n+\frac{1}{2},(0)},
 \\ \label{eq:ap22}
   (\rho \mbf{u})^{n+1,(0)}
  &=
   \rhohat^{(0)} \uhat^{0} - \frac{\dt}{2} \nabla (p^{n,(2)} + p^{n+1,(2)}) 
 \nn &
   = (\rho \mbf{u})^{n,(0)} - \dt \nabla \cdot (\rho \mbf{u} \otimes \mbf{u})^{n+\frac{1}{2},(0)} 
                            - \frac{\dt}{2} \nabla (p^{n,(2)} + p^{n+1,(2)}),
\end{align}
where we used $\nabla p^{k,(0)}, \nabla p^{k,(1)} = 0$ for $k = n, n + 1/2, n+1$.
The elliptic equation (\ref{eq:ell_sec}) combined with the leading order state equation $p^{(0)} = (\gamma-1) (\rho E)^{(0)}$ and $\Pi^{0}=p^{0}$ leads to 
\begin{equation} \label{eq:ap23}
 p^{n+1,(0)} = \phat^{(0)} = p^{n,(0)} - \gamma p^{n+\frac{1}{2},(0)}\dt \nabla \cdot \mbf{u}^{n+\frac{1}{2},(0)}.
\end{equation}
for $\veps \rightarrow 0$. The equations (\ref{eq:ap21})-(\ref{eq:ap23}) are second order approximations to the zero Mach number problem 
(\ref{mass0})-(\ref{div0}) and the corrector step is asymptotic preserving by Theorem \ref{thm:ap}. Thus, we proved

\begin{theorem}\label{thm:ap2}
 The time-discrete scheme (\ref{eq:step1}), (\ref{eq:step2}) is asymptotic preserving in the following sense: 
 the leading order asymptotic expansion of the numerical solution is a consistent approximation of the incompressible 
 Euler equations.
\end{theorem}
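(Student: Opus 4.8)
The plan is to repeat, for the two-stage scheme (\ref{eq:step1})--(\ref{eq:step2}), the asymptotic analysis of Section~\ref{sec:ap}, now working with the componentwise predictor equations (\ref{mass_update2})--(\ref{energy_update2}), the corrector equations (\ref{mass_update3})--(\ref{energy_update3}), and the elliptic pressure equation (\ref{eq:p_sec}). First I would insert the expansions (\ref{rho_asym})--(\ref{p_asym}), written at the time levels $t^n$, $t^{n+1/2}$ and $t^{n+1}$, into the momentum updates (\ref{mom_update2}) and (\ref{mom_update3}). Balancing the $\mathcal{O}(\veps^{-2})$ and $\mathcal{O}(\veps^{-1})$ powers forces $p^{k,(0)}$ and $p^{k,(1)}$ to be spatially constant for $k=n+1/2,\,n+1$, and absorbing the first-order pressure into the zeroth-order one as in \cite{klein} we may set $p^{k,(1)}\equiv 0$. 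Exactly as in (\ref{eq:p_infty_sect3})--(\ref{eq:rhoE_Pi}) this makes the reference pressure expand as $p_\infty^k = p^{k,(0)} + \veps^2 p_\infty^{k,(2)} + \mathcal{O}(\veps^3)$, the leading auxiliary pressure $\Pi^{k,(0)}=p_\infty^{k,(0)}$ and leading energy $(\en+\Pi)^{k,(0)}=\tfrac{\gamma}{\gamma-1}p_\infty^{k,(0)}$ constant, and every stiff contribution built from $(p-p_\infty)^k$ or $\nabla p^k$ becomes $\mathcal{O}(\veps^2)$.

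With these facts the $\mathcal{O}(\veps^0)$ balance is quick. The mass updates (\ref{mass_update2}), (\ref{mass_update3}) collapse to (\ref{eq:ap21}), the midpoint (RK2) discretization of (\ref{mass0}). The momentum updates (\ref{mom_update2}), (\ref{mom_update3}) collapse to (\ref{eq:ap22}), in which the Crank--Nicolson term $\tfrac{\dt}{2}\nabla(p^{n,(2)}+p^{n+1,(2)})$ is the second-order-in-time realisation of the pressure gradient in (\ref{mom0}), with $p^{(2)}$ serving as the incompressible (Lagrange-multiplier) pressure. For the pressure equation I would expand (\ref{eq:p_sec}) using the analogues of (\ref{eq:ap_2a})--(\ref{eq:ap_2b}) at the relevant time levels: the term $\big(\tfrac{\dt(1-\veps^2)}{2\veps}\big)^2\nabla\cdot\big(\tfrac{(p-p_\infty)^{n+1}}{\rhohat}\nabla(p^n+p^{n+1})\big)$ and the quadratic term $\big(\tfrac{\dt(1-\veps^2)}{2\veps}\big)^2\tfrac{1}{2\rhohat}\norm{\nabla(p^n+p^{n+1})}^2$ are both $\mathcal{O}(\veps^2)$ and hence drop out; inserting the leading-order expansions of $\rhohat$, $\phat$ and $\uhat$ — which by (\ref{energy_update2})--(\ref{energy_update3}) are the half- and full-step explicit updates treated in Section~\ref{sec:ap} — what survives is (\ref{eq:ap23}), a consistent discretization of the divergence constraint (\ref{div0}).

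To finish, I would invoke Theorem~\ref{thm:ap}: the corrector stage has exactly the algebraic structure of the first order scheme (explicit non-stiff flux at the predictor state, implicit stiff update, elliptic pressure equation), so its leading-order limit is the consistent incompressible discretization established there, while the predictor stage is a half-timestep of the same type; thus the assembled limit scheme (\ref{eq:ap21})--(\ref{eq:ap23}) is the standard second-order RK2CN approximation of (\ref{mass0})--(\ref{div0}). Integrating the leading-order pressure equation over $\Omega$ under periodic, wall or open boundary conditions, as in (\ref{eq:div_thm}), again gives $p^{n+1,(0)}=p^{n,(0)}$, recovering the pointwise divergence constraint. The one step requiring genuine care — rather than routine bookkeeping — is checking that no negative power of $\veps$ survives in the nonlinear elliptic equation (\ref{eq:p_sec}): the $\veps^{-2}$ prefactors must be exactly compensated by the $\mathcal{O}(\veps^2)$ smallness of $\nabla p^{n+1}$ and $(p-p_\infty)^{n+1}$, and one must verify that the predictor pressure $p^{n+1/2}$ entering (\ref{eq:p_sec}) inherits the same $\mathcal{O}(\veps^2)$ spatial variation, so that the cross terms generate no spurious $\mathcal{O}(\veps^{-1})$ or $\mathcal{O}(1)$ source.
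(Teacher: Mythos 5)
Your proposal is correct and follows essentially the same route as the paper's proof: insert the expansions into the predictor/corrector updates, use the momentum balance at orders $\veps^{-2}$ and $\veps^{-1}$ to conclude that $p^{k,(0)}$, $p^{k,(1)}$ are spatially constant for $k=n,\,n+1/2,\,n+1$ (so all stiff terms built from $(p-p_\infty)^k$ or $\nabla p^k$ are $\mathcal{O}(\veps^2)$), recover \eqref{eq:ap21}--\eqref{eq:ap23} at the $\mathcal{O}(\veps^0)$ level, and conclude via Theorem~\ref{thm:ap}. The care you flag about the predictor pressure $p^{n+1/2}$ inheriting $\mathcal{O}(\veps^2)$ spatial variation is precisely the ingredient the paper uses implicitly in asserting $\nabla p^{k,(0)}=\nabla p^{k,(1)}=0$ for $k=n+1/2$.
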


\begin{remark}
An alternative to the second order Runge-Kutta and Crank-Nicolson time stepping strategies is the implicit-explicit (IMEX) Runge-Kutta schemes \cite{pareschi-russo} or the backward difference
formulae (BDF).  In our recent paper \cite{BispenArunLukacovaNoelle_2014} we have studied different time discretizations for
low Froude number shallow water equations. In particular, we have compared the RK2CN and the
IMEX BDF2 time discretizations from the accuracy, stability and efficiency point of view.
Our extensive numerical tests indicate that both approaches are comparable.
\end{remark}



\subsection{A High Order Stabilization}
\label{sec:stabilization}
In the previous subsections we have introduced the first and
second -  order scheme - (\ref{first_update}) and
(\ref{eq:step1}), (\ref{eq:step2}). Due to semi-implicit nature of
our splitting schemes the following stability condition have to be
satisfied:
\begin{equation}
    \max\left\{\frac{|u_1|+c^*}{\dx}, \frac{|u_2|+c^*}{\dy}\right\}\dt = \nuhat \leq 1,
    \quad \mbf{u} = \left( \begin{array}{c} u_1 \\ u_2 \end{array}\right),
\end{equation}
where $c^*$ is the so-called ``pseudo'' sound speed (\ref{eq:cstar}). However, in our  numerical experiment, e.g.~Section \ref{sec:pulses}, both schemes are unstable for $\nuhat > 0.02$ and $\veps = 0.01$. The reason for the unstable behaviour of scheme in the low Mach number limit is the appearance of the  checkerboard instability, which also strongly influences the convergence of the pressure equation.
The checkerboard instability is a well-known phenomenon arising in the incompressible fluid equations for approximations using
collocated grids and is generated by the decoupling of the spatial approximation. For more details we refer the reader, e.g., to the elaborate description in the book of Ferziger and Peric \cite{FerzigerPeric}.
The simplest approach to filter out the non-physical modes by modifying the discretization
error in the pressure equation is to add fourth order derivatives of the pressure multiplied by a
constant times $\Delta x^2$. We modify this slightly, and introduce the stabilization term
\begin{align}
    \cstab \frac{\dt^4}{\veps^4} \left( \frac{\partial^4 p^{n+q}}{\partial x^4}
     + \frac{\partial^4 p^{n+q}}{\partial y^4} \right)
\label{eq:cstab}
\end{align}
with $q=1/2$ or $q=1$. This is added to the elliptic pressure  equations of the first order scheme in \eqref{eq:p_mod} below, and to the predictor and corrector steps of the second order scheme in \eqref{eq:p_sec1_mod} - \eqref{eq:p_sec2_mod} below. Altogether, we replace the elliptic pressure equation \eqref{eq:elliptic_pressure} of the first order scheme (\ref{first_update}) by the stabilized
pressure equation
\begin{align}
 & \quad \frac{p^{n+1}}{\gamma-1} - \frac{(1-\veps^2)^2}{\veps^2} \, \dt^2 \, \nabla \cdot
  \left(
   \frac{(p-p_\infty)^{n+1}}{\rhohat}
   \, \nabla p^{n+1}
  \right)
 \nn &\quad
  + \frac{\cstab\dt^4}{\veps^4} \left( \frac{\partial^4 p^{n+1}}{\partial x^4} + \frac{\partial^4 p^{n+1}}{\partial y^4}\right)
\nonumber \\[1ex] &=
  - \frac{(1-\veps^2)^2}{2\veps^2} \, \frac{\dt^2}{\rhohat}
  \, \|\nabla p^{n+1}\|^2
  - (1-\veps^2) \, \dt
   (p-p_\infty)^{n+1} \, \nabla \cdot \uhat
  + \frac{\phat}{\gamma-1}.
\label{eq:p_mod}
\end{align}
For the second order scheme, we replace the pressure equation in the predictor step \eqref{eq:p_sec}
by
\begin{align}
    & \quad \frac{p^{n+1/2}}{\gamma-1} - \left( \frac{\dt (1-\veps^2)}{2 \veps} \right)^2
     \nabla\cdot \left(\frac{(p-p_{\infty})^{n+1/2}}{\rhohat}
      \nabla p^{n+1/2}\right)
 \nn &\quad
  + \frac{\cstab\dt^4}{\veps^4} \left(\frac{\partial^4 p^{n+1/2}}{\partial x^4} + \frac{\partial^4 p^{n+1/2}}{\partial y^4} \right)\nonumber \\
    &= \frac{\phat}{\gamma-1} - \left(\frac{\dt (1-\veps^2)}{2\veps}\right)^2 \frac{1}{2\rhohat}\norm{\nabla p^{n+1/2}}^2 - \frac{\dt}{2}(1-\veps^2) (p-p_{\infty})^{n+1/2} \nabla\cdot \uhat,
\label{eq:p_sec1_mod} 
\end{align}
and in the corrector step by
\begin{align}
    & \quad \frac{p^n+p^{n+1}}{\gamma-1}
     - \left( \frac{\dt (1-\veps^2)}{2 \veps} \right)^2 \nabla\cdot
    \left(\frac{(p-p_{\infty})^{n+1}}{\rhohat} \nabla (p^n+p^{n+1})\right)
  \nn &\quad
    + \frac{\cstab\dt^4}{\veps^4} \left(\frac{\partial^4 p^{n+1}}{\partial x^4} + \frac{\partial^4 p^{n+1}}{\partial y^4} \right)
 \nonumber \\ &= 
   \frac{p^n + \phat}{\gamma-1} - \left(\frac{\dt (1-\veps^2)}{2\veps^2}\right)^2 \frac{1}{2\rhohat}\norm{\nabla(p^n+p^{n+1})}^2 \label{eq:p_sec2_mod}
\\ &
   - \frac{\dt}{2}(1-\veps^2) \left\{ 
   (\mbf{u}^n-\uhat)\cdot\nabla p^n + (p-p_{\infty})^n \nabla \cdot \mbf{u}^n + (p-p_{\infty})^{n+1} \nabla\cdot \uhat \right\} \nonumber.
\end{align}

\begin{remark}
\label{remark:cstab}

(i) 
In Lemma~\ref{lemma:ap_stab} and Theorem~\ref{theorem:dtdx23}, we show that the stabilized scheme is asymptotically consistent for desired non-stiff CFL condition $\Delta t = \Delta x$, but only under the restrictive grid condition $\Delta x = O(\veps^{2/3})$. Of course, it would be most desirable to overcome this restriction.

(ii) In all one-dimensional numerical experiments, we set $\cstab = 1/6$ for the first and $\cstab = 1/12$ for the second order scheme. For the two-dimensional experiments, we had to choose substantially higher stabilization parameters, and they are listed in each example.

\end{remark}

According to extensive numerical experiments, the pressure stabilization \eqref{eq:cstab} with a suitable adapted, problem-dependent parameter $\cstab$ stabilizes the implicit velocity pressure decoupling in the low Mach number limit as $p_{\infty}^{n+1,(0)} = p^{n+1,(0)}$,
cf.~\eqref{eq:ap_1}. Hence, the whole scheme remains stable.

The modified fixed point iteration for the first order scheme reads
\begin{align}
 & \quad \frac{p_{k+1}}{\gamma-1} - \frac{(1-\veps^2)^2}{\veps^2} \, \dt^2 \, \nabla \cdot
  \left(
   \frac{(p-p_\infty)_{k}}{\rhohat} \, \nabla p_{k+1} \right)
 \nn &\quad \;
   + \frac{\cstab\dt^4}{\veps^4} \left( \frac{\partial^4 p^{n+1}}{\partial x^4} + \frac{\partial^4 p^{n+1}}{\partial y^4} \right)
\nn &=
  - \frac{(1-\veps^2)^2}{2\veps^2} \, \frac{\dt^2}{\rhohat}
  \, \|\nabla p_{k+1}\|^2
  - (1-\veps^2) \, \dt
   (p-p_\infty)_{k} \, \nabla \cdot \uhat
  + \frac{\phat}{\gamma-1}.
\label{eq:fp_mod}
\end{align}
Analogously, the modified fixed point iterations for the predictor
and corrector second order scheme read
\begin{align}
    & \quad
  \frac{p_{k+1}}{\gamma-1} - \left( \frac{\dt (1-\veps^2)}{2 \veps} \right)^2
   \nabla\cdot \left(\frac{(p-p_{\infty})_{k}}{\rhohat} \nabla p_{k+1}\right)
 \nn &\quad \;
   + \frac{\cstab\dt^4}{\veps^4} \left( \frac{\partial^4 p_{k+1}}{\partial x^4}
     +  \frac{\partial^4 p_{k+1}}{\partial y^4}    \right)
 \nn &=
  \frac{\phat}{\gamma-1} - \left(\frac{\dt (1-\veps^2)}{2\veps}\right)^2 \frac{1}{2\rhohat}\norm{\nabla p_k}^2 - \frac{\dt}{2}(1-\veps^2) (p-p_{\infty})_k \nabla\cdot \uhat \label{eq:fp_sec1_mod},
 \end{align}
 \begin{align}
    & \quad \frac{p^n+p_{k+1}}{\gamma-1} - \left( \frac{\dt (1-\veps^2)}{2 \veps} \right)^2 \nabla\cdot
    \left(\frac{(p-p_{\infty})_{k}}{\rhohat} \nabla (p^n+p_{k+1})\right)
\nn &\quad
    + \frac{\cstab\dt^4}{\veps^4}
    \left( \frac{\partial^4 p_{k+1}}{\partial x^4} 
     +  \frac{\partial^4 p_{k+1}}{\partial y^4} \right)
 \nonumber \\
    &= \frac{p^n + \phat}{\gamma-1} - \left(\frac{\dt (1-\veps^2)}{2\veps^2}\right)^2 \frac{1}{2\rhohat}\norm{\nabla(p^n+p_k)}^2 \label{eq:fp_sec2_mod}\\
    & - \frac{\dt}{2}(1-\veps^2) \left\{ (\mbf{u}^n-\uhat)\cdot\nabla p^n + (p-p_{\infty})^n \nabla \cdot \mbf{u}^n + (p-p_{\infty})_k \nabla\cdot \uhat \right\} \nonumber.
\end{align}

Analogously as in Lemma~\ref{lemma:ap_pressure} we can study the low Mach
number limit $\veps \rightarrow 0$ of the modified pressure
equation (\ref{eq:p_mod}) (divided by $\dt$. Because of the smallness of the pressure terms
derived in \eqref{eq:ap_2a} - \eqref{eq:ap_2b}, it tends towards the discrete energy equation
\eqref{eq:ap_5} plus the stabilization term \eqref{eq:cstab} (again divided by $\dt$),
\mm{
  \frac{p^{n+1,(0)}-p^{n,(0)}}{\dt}
  + \gamma \, p^{n,(0)} \nabla \cdot \mbf{u}^{n,(0)} = 
    \frac{\cstab\dt^3}{\veps^4} \left( \frac{\partial^4 p^{n+1}}{\partial x^4} +  \frac{\partial^4 p^{n+1}}{\partial y^4} \right).
}
It remains show that the stabilization term on the RHS vanishes as $\veps \to 0$. By \eqref{eq:ap_2b}, the pressure
derivatives are $\mathcal{O}(\veps^ 2)$, so the stabilization term is $\mathcal{O}(\dt^3 \veps^{-2})$. The  same holds
for the second order scheme. Consequently, we obtain the following lemma.
\begin{lemma}
\label{lemma:ap_stab}
 If $\dt = o(\veps^{2/3})$ as $\veps \rightarrow 0$, then the first
 and second order scheme with the modified pressure equations
 (\ref{eq:p_mod}), (\ref{eq:p_sec1_mod}), (\ref{eq:p_sec2_mod}) are AP.
\end{lemma}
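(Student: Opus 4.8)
The plan is to reduce the claim to the unstabilized case, which is already settled by Theorem~\ref{thm:ap} for the first order scheme and by Theorem~\ref{thm:ap2} for the second order scheme, and then to show that the only new ingredient --- the fourth order pressure stabilization \eqref{eq:cstab} --- does not contribute to the leading order limit equations as long as $\dt=o(\veps^{2/3})$. In other words, it suffices to track the single extra term through the asymptotic expansion and verify that it is $o(1)$.

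First I would insert the asymptotic expansions \eqref{rho_asym}--\eqref{p_asym} into the modified first order pressure equation \eqref{eq:p_mod}, divided by $\dt$, exactly as in Section~\ref{sec:ap}. Every term other than the stabilization term is literally the same as the one analysed there, and the smallness estimates \eqref{eq:ap_2a}--\eqref{eq:ap_2b}, namely $(p-p_\infty)^{n+1}=\mathcal O(\veps^2)$ and $\nabla p^{n+1}=\mathcal O(\veps^2)$, carry over unchanged since the left and right hand sides of \eqref{eq:p_mod} differ from \eqref{eq:elliptic_pressure} only by the stabilization term. Hence the non-stabilization part again reduces, to leading order, to the discrete divergence constraint \eqref{eq:ap_5} of Lemma~\ref{lemma:ap_pressure}, and the mass and momentum updates are untouched.

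Next I would estimate the stabilization term itself. By \eqref{eq:ap_2b} the spatial pressure derivatives of $p^{n+1}$ are $\mathcal O(\veps^2)$ --- more precisely $\partial_x^4 p^{n+1}=\veps^2\,\partial_x^4 p^{n+1,(2)}+\mathcal O(\veps^3)$ and likewise for $\partial_y^4 p^{n+1}$, provided the expansion coefficients are smooth enough that their fourth derivatives remain bounded. Therefore $\cstab\,\dt^4\veps^{-4}\,(\partial_x^4 p^{n+1}+\partial_y^4 p^{n+1})=\mathcal O(\dt^4\veps^{-2})$, and after dividing by $\dt$ the stabilization contributes a term of size $\mathcal O(\dt^3\veps^{-2})$. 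The hypothesis $\dt=o(\veps^{2/3})$ is precisely what forces $\dt^3\veps^{-2}=o(1)$, so this term vanishes in the limit $\veps\to0$. Consequently the limiting pressure equation is again \eqref{eq:ap_5}, and, combined with the leading order mass and momentum updates, this yields a consistent discretization of \eqref{mass0}--\eqref{div0}, i.e.\ the AP property, by exactly the argument of Theorem~\ref{thm:ap}.

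Finally, the second order case follows verbatim: the stabilization term added to the predictor equation \eqref{eq:p_sec1_mod} and the corrector equation \eqref{eq:p_sec2_mod} has the identical form $\cstab\dt^4\veps^{-4}(\cdots)$, the pressure derivatives are again $\mathcal O(\veps^2)$ at each sub-stage, and Theorem~\ref{thm:ap2} plays the role of Theorem~\ref{thm:ap}. I expect the only delicate points to be (a) the implicit regularity assumption that the asymptotic coefficients admit bounded fourth derivatives, and (b) the distinction between little-$o$ and big-$O$: with merely $\dt=O(\veps^{2/3})$ the stabilization contribution would be $\mathcal O(1)$ and would survive in the limit, altering the limiting equation --- which is exactly the weaker, non-uniform situation recorded in Theorem~\ref{theorem:dtdx23} --- so the strict smallness $\dt=o(\veps^{2/3})$ is genuinely needed here to kill it. Everything else is a direct repetition of Section~\ref{sec:ap}.
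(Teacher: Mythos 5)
Your proposal is correct and follows essentially the same argument as the paper: the non-stabilization terms reduce to the limit equation \eqref{eq:ap_5} exactly as in Lemma~\ref{lemma:ap_pressure}, and the stabilization term, after dividing by $\dt$, is $\mathcal{O}(\dt^3\veps^{-2})$ by \eqref{eq:ap_2b}, which vanishes precisely under $\dt=o(\veps^{2/3})$, with the second order scheme handled identically. Your added remarks on the implicit regularity of the expansion coefficients and the little-$o$ versus big-$O$ distinction only make explicit what the paper leaves tacit.
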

We discuss the impact of this restriction on the asymptotic behavior of the overall scheme after introducing the fully discrete scheme
in the next section (see Remark \ref{remark:dx_eps23} after  Theorem~\ref{theorem:dtdx23} below).

%
%
 \section{Fully Discrete Scheme}
 \label{sec:discrete}

In order to get a fully discrete scheme we first discretize the
given computational domain which is assumed to a rectangle
$R=[a,b]\times[c,d]$. For simplicity, we consider mesh cells of
equal sizes $\dx$ and $\dy$ in the $x$ and $y$ directions. Let
$C_{i,j}$ be the cell centred around the point $(x_i,y_j)$, i.e.\
\begin{equation}
  \label{eq:c_ij}
  C_{ij}=\left[x_i-\frac{\dx}{2},x_i+\frac{\dx}{2}\right]\times
  \left[y_j-\frac{\dy}{2},y_j+\frac{\dy}{2}\right].
\end{equation}
The conserved variable $U$ is approximated by cell averages,
\begin{equation}
  \label{eq:cell_avg}
  \bar{U}_{i,j}(t)=\frac{1}{\abs{C_{i,j}}}\int_{C_{i,j}}U(x,y,t)dxdy.
\end{equation}
From the given cell averages at time $t^n$, a piecewise linear
interpolant is reconstructed, resulting in
\begin{equation}
  \label{eq:interpolant}
  U^{n}(x,y)=\sum_{i,j}\left(\bar{U}^{n}_{i,j}+U_{i,j}^\prime(x-x_i)
  +U_{i,j}^\backprime(y-y_j)\right)\chi_{i,j}(x,y),
\end{equation}
where $\chi_{i,j}$ is the characteristic function of the cell
$C_{i,j}$ and $U_{i,j}^\prime$ and $U_{i,j}^\backprime$ are
respectively the discrete slopes in the $x$ and $y$ directions. A
possible computation of these slopes, which results in an overall
non-oscillatory scheme is given by the family of nonlinear minmod
limiters parametrised by $\theta\in [1,2]$, i.e.\
\begin{align}
  U_{i,j}^\prime&=MM\left(\theta\frac{\bar{U}^{n}_{i+1,j}-\bar{U}^n_{i,j}}{\dx},
    \frac{\bar{U}^{n}_{i+1,j}-\bar{U}^n_{i-1,j}}{2\dx},
    \theta\frac{\bar{U}^{n}_{i,j}-\bar{U}^n_{i-1,j}}{\dx}\right), \\
  U_{i,j}^\backprime&=MM\left(\theta\frac{\bar{U}^{n}_{i,j+1}-\bar{U}^n_{i,j}}{\dy},
    \frac{\bar{U}^{n}_{i,j+1}-\bar{U}^n_{i,j-1}}{2\dy},
    \theta\frac{\bar{U}^{n}_{i,j}-\bar{U}^n_{i,j-1}}{\dy}\right),
\end{align}
where the minmod function is defined by
\begin{equation}
  \label{eq:mm}
  MM(x_1,x_2,\ldots,x_p)=
  \begin{cases}
    \min_p\{x_p\} & \mbox{if} \ x_p>0 \ \forall p,\\
    \max_p\{x_p\} & \mbox{if} \ x_p<0 \ \forall p,\\
    0 & \mbox{otherwise}.
  \end{cases}
\end{equation}
Recall that the first step of the algorithm consists of computing
the solution of the auxiliary system \eqref{eq:aux_sys}. Since
\eqref{eq:aux_sys} is hyperbolic, we use the finite volume update
\begin{equation}
  \label{eq:fv_update}
  \hat{U}_{i,j}^{n+1}=\bar{U}_{i,j}^n-\frac{\dt}{\dx}
  \left(\hat{{\mathcal{F}_1}}_{i+\frac{1}{2},j}
  -\hat{{\mathcal{F}_1}}_{i-\frac{1}{2},j}\right)
  -\frac{\dt}{\dy}\left(\hat{{\mathcal{F}_2}}_{i,j+\frac{1}{2}}-\hat{{\mathcal{F}_2}}_{i,j-\frac{1}{2}}\right),
\end{equation}
where we choose the Rusanov flux for the interface fluxes
$\hat{\mathcal{F}}_1$ and $\hat{\mathcal{F}}_2$, e.g.\ in the $x$
direction
\begin{align}
  \label{eq:kt}
  &\quad \;
   \hat{{\mathcal{F}_1}}_{i+\frac{1}{2},j}\left(U_{i+\frac{1}{2},j}^+,
    U_{i+\frac{1}{2},j}^-\right)
\nn &=
  \frac{1}{2}\left(\hat{F}_1\left(
      U_{i+\frac{1}{2},j}^+\right)+\hat{F}_1\left(U_{i+\frac{1}{2},j}^-\right)
  \right)
     -\frac{a_{i+\frac{1}{2},j}}{2}\left(U_{i+\frac{1}{2},j}^+
    -U_{i+\frac{1}{2},j}^-\right).
\end{align}
The expression for the numerical flux $\mathcal{F}_2$ in the $y$
direction is analogous. Here, $U_{i+1/2,j}^+$ and $U_{i+1/2,j}^-$
are respectively the right and left interpolated states at a right
hand vertical interface and $a_{i+1/2,j}$ is the maximal
propagation speed given by the (non-stiff) eigenvalues
$\hat\lambda$ of the flux component $\hat{F}_1$ in the $x$
direction. Based on \eqref{eq:hat_eig} we obtain
\begin{equation}
  \label{eq:a_x}
  a_{i+\frac{1}{2},j}=\max\left(\abs{u}_{i+\frac{1}{2},j}^{+}
    +{c^*}_{i+\frac{1}{2},j}^{+},\abs{u}_{i+\frac{1}{2},j}^{-}
    +{c^*}_{i+\frac{1}{2},j}^{-}\right).
\end{equation}
In an analogous way, the numerical fluxes in the $y$ direction
could be assembled.

The timestep $\dt$ is chosen by the non-stiff CFL condition
\begin{equation}
  \label{eq:cfl}
  \dt\max_{i,j}\max\left(\frac{\abs{u}_{i,j}+c_{i,j}^*}{\dx},
    \frac{\abs{v}_{i,j}+c_{i,j}^*}{\dy}\right)=\nuhat
\end{equation}
with $\nuhat$ being the given CFL number. Going back to the
dimensional variables, the CFL conditions reads
\begin{equation}
  \label{eq:cfl1}
  \dt^\prime\max_{i,j}\max\left(\frac{\abs{u^\prime}_{i,j}
  +\frac{c_{i,j}^{\prime,*}}{\veps}}{\dx^\prime},
  \frac{\abs{v^\prime}_{i,j}+\frac{c_{i,j}^{\prime,*}}
  {\veps}}{\dy^\prime}\right)=\nuhat.
\end{equation}
Hence, the effective CFL number $\nu_{eff}\sim\nuhat/\veps$.

The next step consists of solving the linearised elliptic equation
\eqref{eq:fp_mod}, (\ref{eq:fp_sec1_mod}) or (\ref{eq:fp_sec2_mod}) to obtain the pressure $p^{n+1}$. The
second order terms in in the pressure equations are discretized
using compact central differences, e.g.\
\mm{
 &\quad\;
    \left((fg_x)_x\right)_{i,j}
\nn &=
  \frac{1}{\dx}\left\{\left(fg_x\right)_{i+\frac{1}{2},j}
    -\left(fg_x\right)_{i-\frac{1}{2},j}\right\}
\nn &=
  \frac{1}{\dx}\left\{f_{i+\frac{1}{2},j}(g_x)_{i+\frac{1}{2},j}
    -f_{i-\frac{1}{2},j}(g_x)_{i-\frac{1}{2},j}\right\}
\nn &=
  \frac{1}{\dx} \bigg\{
   \frac{f_{i+1,j}+f_{i,j}}{2}
    \frac{g_{i+1,j}-g_{i,j}}{\dx}
     - \frac{f_{i,j}+f_{i-1,j}}{2}
      \frac{g_{i,j}-g_{i-1,j}}{\dx}
   \bigg\}.
}
The second order differences in the $y$ direction are treated
analogously. All the first derivatives terms are also discretized
by simple central differences. Discretizing all the terms, finally we are lead to a linear
system for the pressure $p^{n+1}$ at the new timestep. The
resulting linear system has a simple five-diagonal structure in the
1-D case, whereas it has a band matrix nature in the
multidimensional case. The linear system is solved by means of the
direct solver UMFPACK \cite{umfpack} in all the numerical test
problems reported in this paper.

\subsection{Summary of the Algorithm}
\label{sec:sum_alg}

In the following, we summarise the main steps in the algorithm.
For simplicity, we do it only for the first order case. The second
order scheme follows similar lines, except that it contains two
cycles in one timestep.

First, let us suppose that $(\rho^n,\mbf{u}^n,p^n)$ are the given
initial values.

In step 1 we solve the auxiliary system \eqref{eq:aux_sys}, i.e.\
\begin{equation*}
  \label{eq:aux_sys_rem}
  U_t+\nabla\cdot\hat{F}(U)=0
\end{equation*}
subject to the given initial data to obtain
$(\hat{\rho},\hat{\mbf{u}},\hat{p})$ at the new time $t^{n+1}$.

In a fully compressible problem, i.e.\ when $\veps=1$, we simply
set
$$
 (\rho^{n+1},\mbf{u}^{n+1},p^{n+1})=(\hat{\rho},\hat{\mbf{u}},\hat{p})
$$
and the process continues. Otherwise, we set
$\rho^{n+1}=\hat{\rho}$.

In step 2 we solve the linearised elliptic equation
\eqref{eq:fp_mod}, i.e.~the fix-point iteration\
\begin{align*}
 & \quad \frac{p_{k+1}}{\gamma-1} - \frac{(1-\veps^2)^2}{\veps^2} \, \dt^2 \, \nabla \cdot
  \left(
   \frac{(p-p_\infty)_{k}}{\rhohat}
   \, \nabla p_{k+1}
  \right) + \frac{\dt^4}{6 \veps^4} 
  \left( \frac{\partial^4 p^{n+1}}{\partial x^4} 
   + \frac{\partial^4 p^{n+1}}{\partial y^4} \right)
\nonumber \\ &=
  - \frac{(1-\veps^2)^2}{2\veps^2} \, \frac{\dt^2}{\rhohat}
  \, \|\nabla p_{k+1}\|^2
  - (1-\veps^2) \, \dt
   (p-p_\infty)_{k} \, \nabla \cdot \uhat
  + \frac{\phat}{\gamma-1}
\end{align*}
to get the pressure $p^{n+1}$ at the new time level.

In step 3 we update the velocity $\mbf{u}^{n+1}$ is explicitly
using \eqref{eq:update_mom}, i.e.
\begin{equation*}
    (\rho \mbf{u})^{n+1}
 =
  \momhat
  - \frac{1-\veps^2}{\veps^2} \, \dt \, \nabla \cdot (p^{n+1}  \, \Id)
\end{equation*}
with the aid of $p^{n+1}$ from step 2.

As shown in Lemma~\ref{lemma:ap_stab}, the stabilization term respects the AP property if $\dt = o(\veps^{2/3})$. Combining this with the definition of $\dt$ using the non-stiff CFL condition~\eqref{eq:cfl}, we obtain

\begin{theorem}\label{theorem:dtdx23}
The fully discrete stabilized scheme is AP under a time-step restriction
\mm{
  \dt = \mathcal{O}(\dx)
\label{eq:dt_eps23}  
}
and a spatial resolution
\mm{
  \dx = o(\veps^{2/3}).
\label{eq:dx_eps23}
}
\end{theorem}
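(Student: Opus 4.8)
The plan is to combine Lemma~\ref{lemma:ap_stab} — which already guarantees that the stabilized first and second order schemes, with the modified pressure equations \eqref{eq:p_mod}, \eqref{eq:p_sec1_mod}, \eqref{eq:p_sec2_mod}, are AP as soon as $\dt = o(\veps^{2/3})$ — with the non-stiff CFL condition \eqref{eq:cfl}. The only real content of the theorem is to translate the abstract timestep constraint $\dt = o(\veps^{2/3})$ into the two concrete mesh requirements \eqref{eq:dt_eps23} and \eqref{eq:dx_eps23}.

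First I would establish \eqref{eq:dt_eps23}, i.e.\ $\dt = \mathcal{O}(\dx)$. Solving \eqref{eq:cfl} for $\dt$ gives
\[
  \dt \;=\; \frac{\nuhat}{ \max_{i,j}\max\!\left( \frac{|u|_{i,j}+c^{*}_{i,j}}{\dx},\, \frac{|v|_{i,j}+c^{*}_{i,j}}{\dy} \right) } \;\le\; \frac{\nuhat\,\dx}{c_0},
\]
where $c_0>0$ is a lower bound for the pseudo-sound speed $c^{*}$ that is independent of $\veps$. Such a bound is available precisely because of our choice of reference pressure: by \eqref{eq:pi} and \eqref{eq:p_infty_def}, $\Pi = \veps^2 p + (1-\veps^2)\inf_{\mbf{x}}p \ge \inf_{\mbf{x}} p$, so by \eqref{eq:cstar}
\[
  c^{*} = \sqrt{\frac{p+(\gamma-1)\Pi}{\rho}} \;\ge\; \sqrt{\frac{\gamma \inf_{\mbf{x}} p}{\sup_{\mbf{x}}\rho}} \;=:\; c_0 \;>\; 0,
\]
as long as $p$ stays positive and bounded away from $0$ and $\rho$ stays bounded, which is the regime of interest; the advection speeds $|u|,|v|$ are $\mathcal{O}(1)$ by the expansions \eqref{u_asym}. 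Since $\nuhat$ is a fixed CFL number, independent of $\veps$, this yields $\dt = \mathcal{O}(\dx)$.

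Next I would simply impose the spatial resolution \eqref{eq:dx_eps23}, $\dx = o(\veps^{2/3})$. Combining it with $\dt = \mathcal{O}(\dx)$ from the previous step gives $\dt = \mathcal{O}(\dx) = o(\veps^{2/3})$, so the hypothesis of Lemma~\ref{lemma:ap_stab} is met and the fully discrete stabilized scheme is AP in the sense of Theorem~\ref{thm:ap}. As a consistency check one re-traces the limit computation preceding Lemma~\ref{lemma:ap_stab}: the stabilization term contributes $\mathcal{O}(\dt^3\veps^{-2})$ to the discrete energy equation because, by \eqref{eq:ap_2b}, the pressure derivatives are $\mathcal{O}(\veps^2)$; under $\dt = o(\veps^{2/3})$ this is $o(\veps^2\cdot\veps^{-2}) = o(1)$, so the extra fourth-order term drops out in the limit and one recovers \eqref{eq:ap_5}, hence the divergence constraint \eqref{div0}.

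The main obstacle is the uniform-in-$\veps$ lower bound $c^{*} \ge c_0 > 0$ used in the CFL step: without it the CFL-selected timestep could in principle degenerate relative to $\dx$, and the clean relation $\dt = \mathcal{O}(\dx)$ would fail. Everything else is bookkeeping — propagating $\dt = \mathcal{O}(\dx)$ through the little-$o$ condition and invoking Lemma~\ref{lemma:ap_stab}.
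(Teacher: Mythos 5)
Your proposal is correct and follows essentially the same route as the paper: the paper's proof of Theorem~\ref{theorem:dtdx23} is exactly the combination of Lemma~\ref{lemma:ap_stab} with the non-stiff CFL condition~\eqref{eq:cfl}, which yields $\dt = \mathcal{O}(\dx)$ since the non-stiff speeds $|u|+c^{*}$ are $\mathcal{O}(1)$ uniformly in $\veps$, so that $\dx = o(\veps^{2/3})$ forces $\dt = o(\veps^{2/3})$. Your explicit lower bound $c^{*}\ge\sqrt{\gamma\inf p/\sup\rho}$ simply spells out a step the paper leaves implicit and is a welcome (correct) addition.
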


\begin{remark}
\label{remark:dx_eps23}

(i)
Condition \eqref{eq:dx_eps23} shows that the AP property may not hold for an underresolved spatial grid. This restriction is due to the stabilization term \eqref{eq:cstab}, and it is an important question whether a smaller term could stabilize the present IMEX scheme.

(ii) For moderately low Mach numbers, \eqref{eq:dx_eps23} is not a severe restriction. For example, for $\varepsilon=10^{-2}$, one needs roughly 20 gridpoints per unit length, and for $\veps=10^{-3}$, roughly 100 points.
 
\end{remark}

\begin{remark}
  The following remarks are in order.
  \begin{enumerate}
  \item [(i)] It has to be noted that throughout this paper we follow
    a non-di\-men\-siona\-lisation in such a way that $0<\veps\leq 1$.
  \item [(ii)] We note that in the fully compressible case, i.e.\ when
    $\veps=1$, the auxiliary system is the Euler system itself and the
    stiff flux $\tilde{F}\equiv 0$. In this case, the algorithm just
    consists of step 1 and the overall scheme simply reduces to a shock
    capturing algorithm.
  \end{enumerate}
\end{remark}

%

\section{Numerical Experiments}
 
\label{sec:numerics}

In order to validate the proposed AP sche\-mes (\ref{first_update}) and (\ref{eq:step1}), (\ref{eq:step2}), in this section we
present the results of numerical experiments. 
First, we observe the convergence of the fixed point iterations (\ref{eq:fp_mod}) and (\ref{eq:fp_sec1_mod}), (\ref{eq:fp_sec2_mod}). As low Mach number tests we consider weakly
compressible problems with $\veps\ll 1$. In particular, we study
the propagation of long wavelength acoustic waves and their interactions
with small scale perturbations. 
Mach number.
Then, we test the performance of the scheme in the compressible regime $\veps=1$ and in this case the scheme should possess shock capturing
features. The numerical results clearly indicate that the scheme
captures discontinuous solutions containing shocks, contacts, etc.\
without any oscillations.
In many of
our numerical studies we have observed
the formation of shocks due to weakly nonlinear effects, albeit a
prescribed value of $\veps$ less than unity. Our numerical results
agree well with the benchmarks reported in the literature.
 
If not stated otherwise, the stabilization parameter in \eqref{eq:cstab}
is set to $\cstab=1/6$ for the first order and $\cstab=1/12$ for the second
order one-dimensional schemes. Note however, that it is problem-dependent and is
substantially larger for the two-dimensional problems.

 \subsection{Test Problems In The 1D Case}
 
 \subsubsection{Two Colliding Acoustic Pulses}
 \label{sec:pulses}
 
 We consider a weakly compressible test problem taken from
 \cite{klein}. The setup consists of two colliding acoustic pulses in a
 weakly compressible regime. The domain is $-L\leq x\leq L=2/\veps$ and
 the initial data are given by
 \begin{align*}
   \rho(x,0)&=\rho_0+\frac{1}{2}\veps\rho_1\left(1-\cos\left(\frac{2\pi
         x}{L}\right)\right),\ \rho_0=0.955,\ \rho_1=2.0,\\
   u(x,0)&=\frac{1}{2}u_0 \ \sign(x)\left(1-\cos\left(\frac{2\pi
         x}{L}\right)\right), \ u_0=2\sqrt{\gamma},\\
   p(x,0)&=p_0+\frac{1}{2}\veps p_1\left(1-\cos\left(\frac{2\pi
         x}{L}\right)\right),\ p_0=1.0,\ p_1=2\gamma.
 \end{align*}
 
 \begin{center} \textbf{Investigation of Fixed Point Iterations} \end{center}
 
 The value of the parameter $\veps$ will be specified later. First, we use this
 test to study the convergence of the fixed point iterations $(p_k)_k$ (\ref{eq:fp_mod}), (\ref{eq:fp_sec1_mod}) (\ref{eq:fp_sec2_mod}). Since an exact solution of the nonlinear pressure equations is not available, we determine the experimental contraction rate (ECR) as
 \begin{equation}
   \label{eq:ECR}
   \mbox{ECR}:=\frac{ \norm{p_{k}-p_{k+1}}}{\norm{p_{k-1}-p_{k}}} \approx \frac{\norm{p_{k}-p}}{\norm{p_{k-1}-p}}.
 \end{equation}
 If ECR$ < 1$ is independent of $k$, it holds
 \begin{equation}
     \norm{p_k - p_{k+1}} = \mbox{ECR}^k \norm{p_1 - p_{0}} \quad \forall \ k \geq 0
 \end{equation}
 and ECR is the contraction constant of the sequence $(p_k)_k$, i.e.~the sequence $(p_k)_k$ tends for $k\rightarrow \infty$ to its limit $p$ and
 \begin{equation}
     \norm{p-p_k} \leq \frac{ECR^k}{1-ECR} \norm{p_1-p_0}.
 \end{equation}
 In all our numerical  tests using various configurations of $\Delta t, \Delta x, \veps$ we have observed $ECR \ll 1$ i.e.~fast convergence. After less then $10$ iterations the convergence stops due to double precision arithmetic. This behaviour is demonstrated in Tables~\ref{tab:ecr_O1_0p1_w} - \ref{tab:ecr_O21_0p01_w}, where each table contains the errors $\norm{p_N-p_{N-1}}$ and ECR numbers, cf.~(\ref{eq:ECR}), for $N=k+1$  of the fixed point iteration during the first and fifth time step. The errors are computed using the discrete Sobolev norm
 \begin{equation}
     \norm{p}_{W^{1,1}}:= \Delta x \sum\limits_i \left\{|p_i|
     + \left| \delta_x p_i \right|\right\}
 \end{equation}
 and the discrete variant of the norm (\ref{eq:elliptic_6})
 \begin{equation}
     \norm{p}_{S^k}:= \frac{1-\veps^2}{\veps} \Delta t 
     \bigg( \Delta x \sum\limits_i \left( \frac{p_i^2}{\gamma-1}
     + \frac{(p_k)_i-(p_k)_{\infty}}{\hat{\rho}_i} \left(\delta_x p_i\right)^2
     \right) \bigg)^{1/2}
 \end{equation}
 Here, $p_i$ denotes the cell average of $p$ on the $i$-th cell, $\delta_x p_i$ is a central finite difference derivative,
 if the index $i$ corresponds to an inner cell, and one-sided finite difference otherwise. Tables~\ref{tab:ecr_O1_0p1_w}-\ref{tab:ecr_O22_0p1_s} present the results obtained for the first and second order scheme (\ref{first_update}) and (\ref{eq:step1}), (\ref{eq:step2}) for $\veps = 0.1$, respectively. Furthermore, the results obtained for  $\veps = 0.01$ are presented in Tables~\ref{tab:ecr_O1_0p01_w}-\ref{tab:ecr_O21_0p01_w}.
 
 We can notice that already after one iteration the error in the discrete pressure equation is  smaller then the local truncation error. Consequently, no significant differences have been observed between the numerical solution obtained by iterating the pressure once or several  times, cf.~Figure~\ref{fig:it_noit}. 
 Therefore, in the following computations we have performed just one nonlinear iteration to solve the pressure equations (\ref{eq:fp_mod}), (\ref{eq:fp_sec1_mod}), (\ref{eq:fp_sec2_mod}) numerically.
 
 \begin{figure}[htbp]
     \includegraphics[height=0.275\textheight,width=0.475\textwidth]{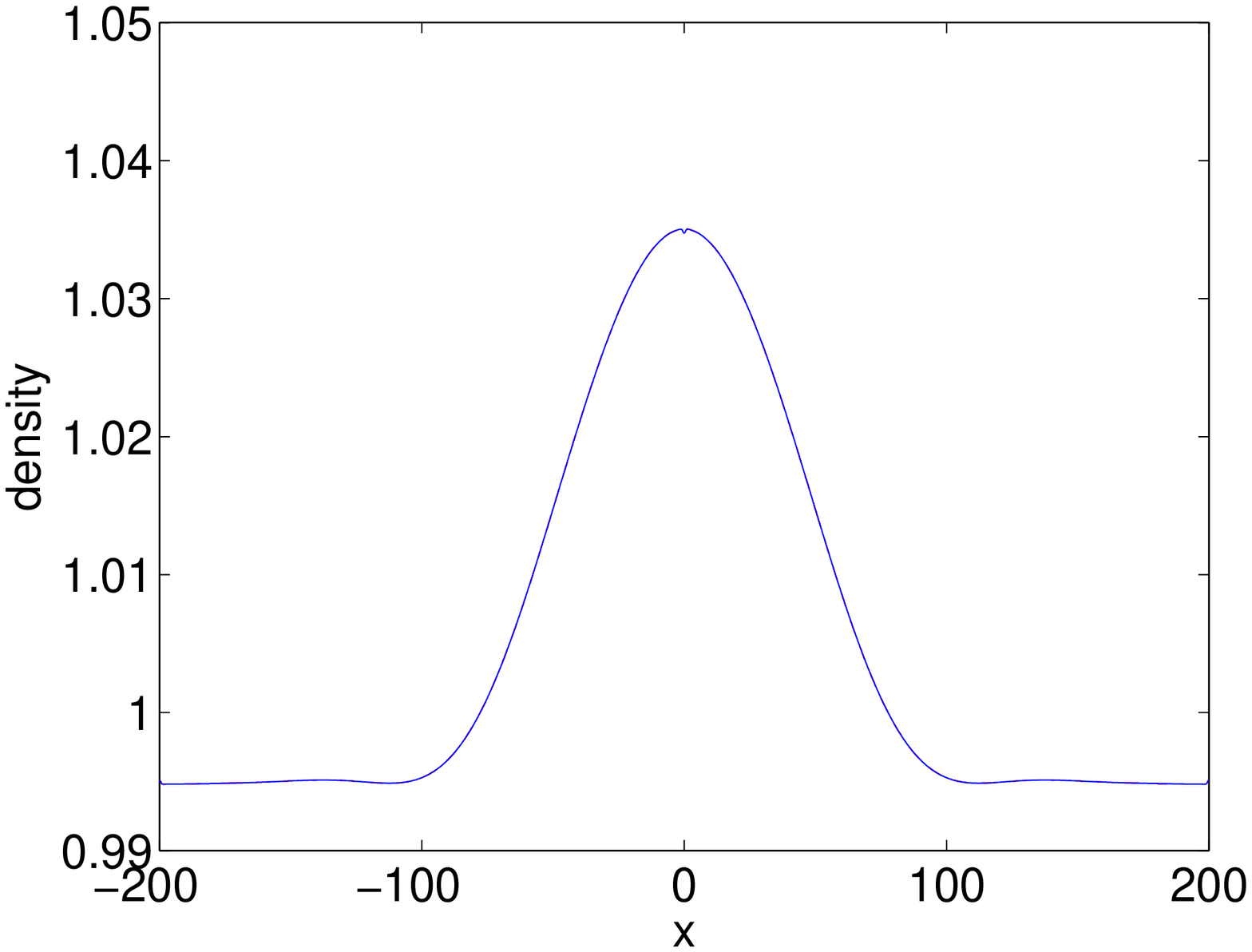}
     \includegraphics[height=0.275\textheight,width=0.475\textwidth]{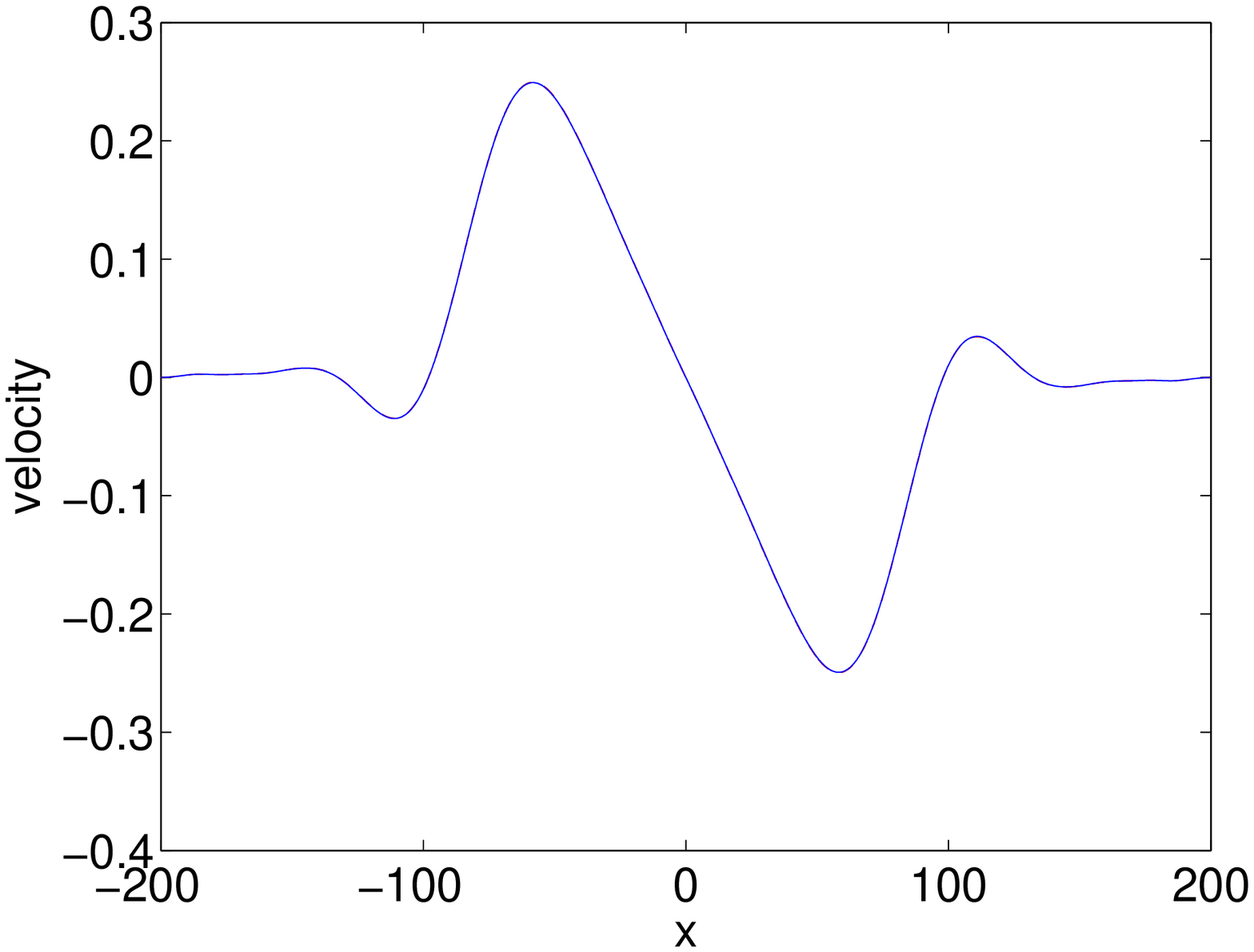}
   \\
     \includegraphics[height=0.275\textheight,width=0.475\textwidth]{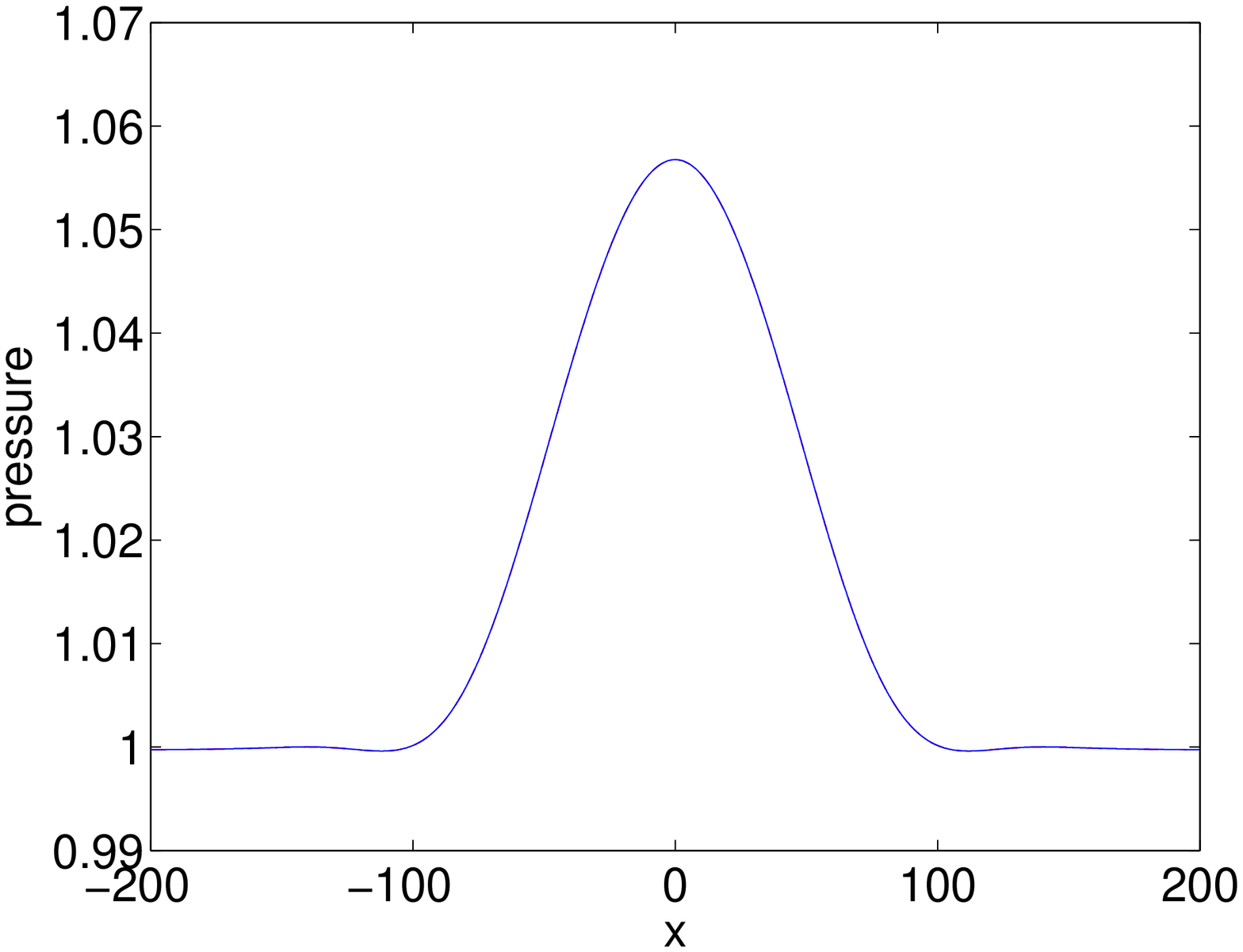}
     \includegraphics[height=0.275\textheight,width=0.475\textwidth]{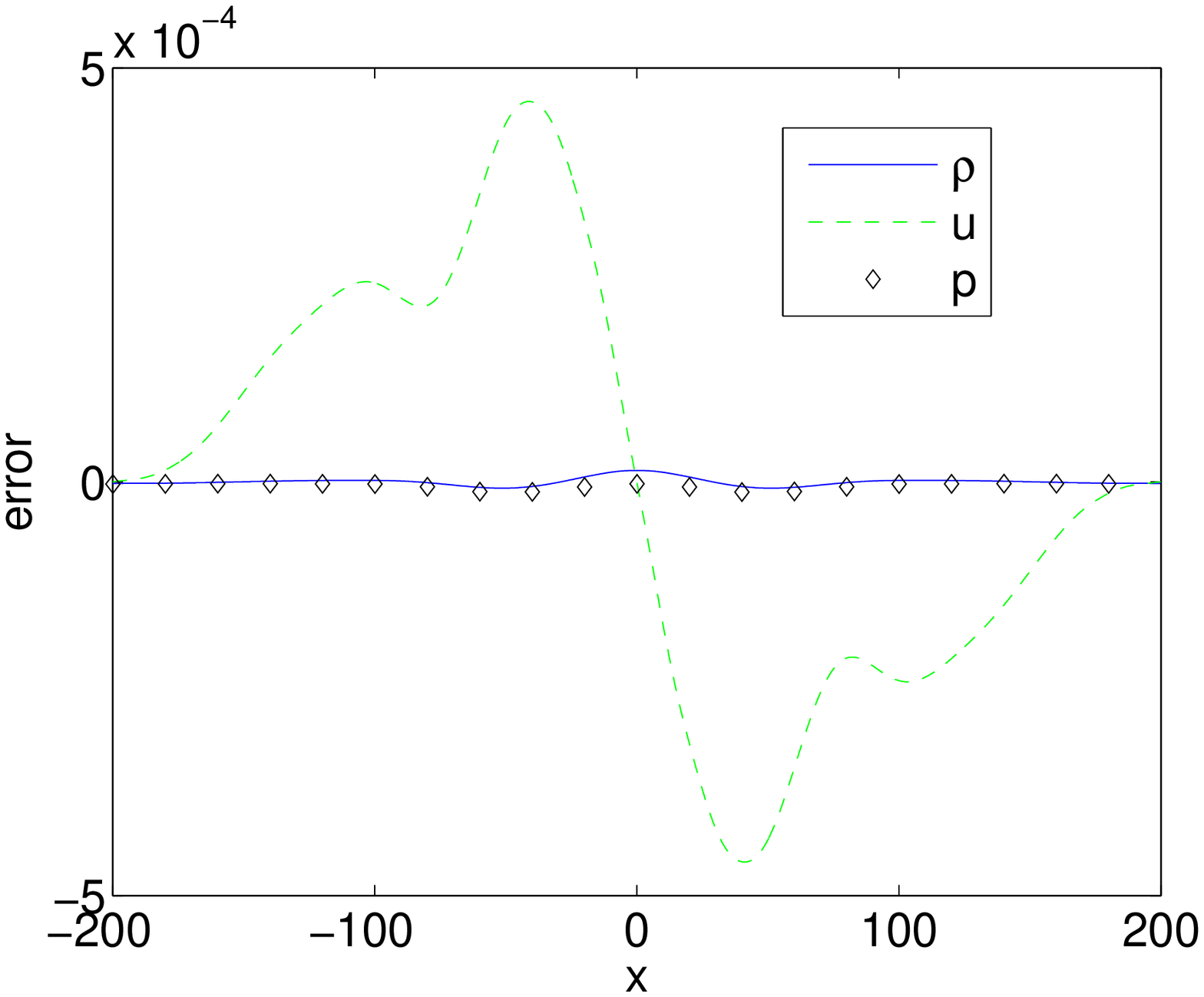}
     \caption{Two colliding acoustic pulses problem. Numerical solution for $\veps=0.01$ obtained by the second order scheme without any limiters, $\nuhat=0.9$, $\nu=90$, $\cstab=1/12$. The dashed lines represent the solution with $20$ iterations and the straight lines show the solution without iterations. Top left: density. Top right: velocity. Bottom left: pressure. Bottom right: difference between density, velocity and pressure.}
     \label{fig:it_noit}
 \end{figure}
 
 \begin{table}
   \begin{center}
     \begin{tabular}{l|ll|ll}
       \multicolumn{5}{c}{} \\ 
     & \multicolumn{2}{c}{1st time step} & \multicolumn{2}{c}{5th time step} \\
       $N$ & $W^{1,1}$-error & \mbox{ECR} & $W^{1,1}$-error & \mbox{ECR} \\
       \hline \\
       1 & 3.60e-04 &    1.000 & 7.91e-04 &    1.000 \\
       2 & 2.58e-06 &    0.007 & 1.32e-05 &    0.017 \\
       3 & 2.07e-08 &    0.008 & 2.47e-07 &    0.019 \\
       4 & 1.71e-10 &    0.008 & 4.88e-09 &    0.020 \\
       5 & 1.45e-12 &    0.009 & 9.83e-11 &    0.020 \\
       6 & 1.34e-13 &    0.092 & 2.12e-12 &    0.022 \\
       7 & 1.07e-13 &    0.799 & 1.55e-13 &    0.073 \\
       8 & 1.02e-13 &    0.955 & 1.48e-13 &    0.956 \\
       9 & 8.65e-14 &    0.848 & 1.48e-13 &    0.999 \\
       10 & 9.13e-14 &    1.055 & 1.32e-13 &    0.894 \\
       11 & 9.90e-14 &    1.084 & 1.31e-13 &    0.989 \\
       12 & 9.56e-14 &    0.966 & 1.31e-13 &    1.003 \\
       13 & 9.39e-14 &    0.982 & 1.40e-13 &    1.066 \\
     \end{tabular}
     \vspace*{1ex}
     \caption{Two colliding acoustic pulses problem.
       $W^{1,1}$-errors for pressure and ECR's of the iteration (\ref{eq:fp_mod}) during the first and 5th
       time step using the first order scheme.
        $\veps=0.1$, $\nuhat = 0.9$, $\nu_{eff} = 9$.}
     \label{tab:ecr_O1_0p1_w}
   \end{center}
 \end{table}

 \begin{table}
   \begin{center}
     \begin{tabular}{l|ll|ll}
       \multicolumn{5}{c}{}\\
        & \multicolumn{2}{c}{1st time step} & \multicolumn{2}{c}{5th time step} \\
       $N$ & $S$-error & \mbox{ECR} & $S$-error & \mbox{ECR} \\
           \hline \\
       1 & 4.86e-05 &    1.000 & 1.92e-04 &    1.000 \\
       2 & 3.65e-07 &    0.008 & 3.68e-06 &    0.019 \\
       3 & 2.99e-09 &    0.008 & 7.37e-08 &    0.020 \\
       4 & 2.50e-11 &    0.008 & 1.49e-09 &    0.020 \\
       5 & 2.10e-13 &    0.008 & 3.05e-11 &    0.020 \\
       6 & 1.03e-14 &    0.049 & 6.23e-13 &    0.020 \\
       7 & 8.04e-15 &    0.784 & 1.69e-14 &    0.027 \\
       8 & 7.80e-15 &    0.970 & 1.29e-14 &    0.765 \\
       9 & 6.87e-15 &    0.881 & 1.38e-14 &    1.071 \\
       10 & 6.95e-15 &    1.011 & 1.13e-14 &    0.821 \\
       11 & 7.96e-15 &    1.145 & 1.05e-14 &    0.924 \\
       12 & 7.80e-15 &    0.980 & 1.09e-14 &    1.035 \\
       13 & 7.49e-15 &    0.961 & 1.23e-14 &    1.134 \\
     \end{tabular}
     \vspace*{1ex}
     \caption{Two colliding acoustic pulses problem.
       $\|\cdot\|_{S}$-errors for pressure and ECR's of the iteration (\ref{eq:fp_mod}) during the first and 5th time step using the first order scheme.
        $\veps=0.1$, $\nuhat = 0.9$, $\nu_{eff} = 9$.}
     \label{tab:ecr_O1_0p1_s}
   \end{center}
 \end{table}
 
 
 \begin{table}
   \begin{center}
     \begin{tabular}{l|ll|ll}
       \multicolumn{5}{c}{}\\
        & \multicolumn{2}{c}{1st time step} & \multicolumn{2}{c}{5th time step} \\
       $N$ & $W^{1,1}$-error & \mbox{ECR} & $W^{1,1}$-error & \mbox{ECR} \\
           \hline \\
        1 & 8.99e-05 &    1.000 & 1.44e-04 &    1.000 \\
       2 & 3.53e-07 &    0.004 & 1.03e-06 &    0.007 \\
       3 & 1.48e-09 &    0.004 & 8.30e-09 &    0.008 \\
       4 & 6.36e-12 &    0.004 & 7.16e-11 &    0.009 \\
       5 & 8.51e-14 &    0.013 & 6.86e-13 &    0.010 \\
       6 & 6.41e-14 &    0.753 & 7.66e-14 &    0.112 \\
       7 & 6.71e-14 &    1.047 & 6.36e-14 &    0.830 \\
       8 & 5.75e-14 &    0.858 & 5.36e-14 &    0.843 \\
       9 & 5.69e-14 &    0.989 & 7.27e-14 &    1.358 \\
       10 & 6.14e-14 &    1.079 & 7.57e-14 &    1.040 \\
       11 & 5.74e-14 &    0.935 & 6.96e-14 &    0.920 \\
       12 & 5.77e-14 &    1.006 & 5.67e-14 &    0.814 \\
       13 & 7.15e-14 &    1.238 & 5.04e-14 &    0.890 \\
     \end{tabular}
     \vspace*{1ex}
     \caption{Two colliding acoustic pulses problem.
       $\|\cdot\|_{W^{1,1}}$-errors for pressure and ECR's of the iteration (\ref{eq:fp_sec1_mod}) during the first and 5th  time step using the second order scheme - first step (\ref{eq:step1}).
        $\veps=0.1$, $\nuhat = 0.9$, $\nu_{eff} = 9$.}
     \label{tab:ecr_O21_0p1_w}
   \end{center}
 \end{table}
 
 \begin{table}
   \begin{center}
     \begin{tabular}{l|ll|ll}
       \multicolumn{5}{c}{}\\ 
        & \multicolumn{2}{c}{1st time step} & \multicolumn{2}{c}{5th time step} \\
       $N$ & $W^{1,1}$-error & \mbox{ECR} & $W^{1,1}$-error & \mbox{ECR} \\
           \hline \\
       1 & 2.63e-03 &    1.000 & 3.88e-03 &    1.000 \\
       2 & 1.09e-05 &    0.004 & 3.00e-05 &    0.008 \\
       3 & 4.64e-08 &    0.004 & 2.66e-07 &    0.009 \\
       4 & 2.01e-10 &    0.004 & 2.52e-09 &    0.009 \\
       5 & 9.19e-13 &    0.005 & 2.46e-11 &    0.010 \\
       6 & 1.41e-13 &    0.154 & 3.73e-13 &    0.015 \\
       7 & 1.46e-13 &    1.034 & 1.44e-13 &    0.387 \\
       8 & 1.46e-13 &    1.002 & 1.56e-13 &    1.079 \\
       9 & 1.35e-13 &    0.921 & 1.50e-13 &    0.963 \\
       10 & 1.13e-13 &    0.840 & 1.39e-13 &    0.930 \\
       11 & 1.16e-13 &    1.022 & 1.42e-13 &    1.018 \\
       12 & 1.23e-13 &    1.058 & 1.66e-13 &    1.173 \\
       13 & 1.50e-13 &    1.223 & 1.35e-13 &    0.811 \\
     \end{tabular}
     \vspace*{1ex}
     \caption{Two colliding acoustic pulses problem.
       $\|\cdot\|_{W^{1,1}}$-errors for pressure and ECR's of the iteration (\ref{eq:fp_sec2_mod}) during the first and 5th time step using the second order scheme - 2nd step.
        $\veps=0.1$, $\nuhat = 0.9$, $\nu_{eff} = 9$.}
     \label{tab:ecr_O2w_0p1_w}
   \end{center}
 \end{table}
 \begin{table}
   \begin{center}
     \begin{tabular}{l|ll|ll}
       \multicolumn{5}{c}{}\\ 
        & \multicolumn{2}{c}{1st time step} & \multicolumn{2}{c}{5th time step} \\
       $N$ & $S$-error & \mbox{ECR} & $S$-error & \mbox{ECR} \\
           \hline \\
     1 & 1.83e-04 &    1.000 & 4.37e-04 &    1.000 \\
       2 & 7.71e-07 &    0.004 & 4.03e-06 &    0.009 \\
       3 & 3.33e-09 &    0.004 & 3.89e-08 &    0.010 \\
       4 & 1.46e-11 &    0.004 & 3.82e-10 &    0.010 \\
       5 & 6.44e-14 &    0.004 & 3.78e-12 &    0.010 \\
       6 & 5.38e-15 &    0.083 & 3.91e-14 &    0.010 \\
       7 & 5.59e-15 &    1.040 & 5.56e-15 &    0.142 \\
       8 & 5.14e-15 &    0.920 & 5.91e-15 &    1.062 \\
       9 & 4.96e-15 &    0.964 & 5.54e-15 &    0.938 \\
       10 & 4.36e-15 &    0.879 & 4.86e-15 &    0.876 \\
       11 & 4.17e-15 &    0.957 & 5.70e-15 &    1.174 \\
       12 & 4.77e-15 &    1.144 & 6.01e-15 &    1.053 \\
       13 & 5.65e-15 &    1.184 & 5.27e-15 &    0.878 \\
     \end{tabular}
     \vspace*{1ex}
     \caption{Two colliding acoustic pulses problem.
       $\|\cdot\|_{S}$-errors for pressure and ECR's of the iteration (\ref{eq:fp_sec2_mod}) during the first and 5th  time step using the second order scheme - first step.
        $\veps=0.1$, $\nuhat = 0.9$, $\nu_{eff} = 9$.}
     \label{tab:ecr_O21_0p1_s}
   \end{center}
 \end{table}
 
 \begin{table}
   \begin{center}
     \begin{tabular}{l|ll|ll}
       \multicolumn{5}{c}{}\\
        & \multicolumn{2}{c}{1st time step} & \multicolumn{2}{c}{5th time step} \\
       $N$ & $S$-error & \mbox{ECR} & $S$-error & \mbox{ECR} \\
           \hline \\
       1 & 6.24e-06 &    1.000 & 1.62e-05 &    1.000 \\
       2 & 2.53e-08 &    0.004 & 1.35e-07 &    0.008 \\
       3 & 1.08e-10 &    0.004 & 1.19e-09 &    0.009 \\
       4 & 4.67e-13 &    0.004 & 1.07e-11 &    0.009 \\
       5 & 3.88e-15 &    0.008 & 9.62e-14 &    0.009 \\
       6 & 2.59e-15 &    0.669 & 2.99e-15 &    0.031 \\
       7 & 2.18e-15 &    0.840 & 2.60e-15 &    0.868 \\
       8 & 2.08e-15 &    0.956 & 2.28e-15 &    0.877 \\
       9 & 2.03e-15 &    0.973 & 2.87e-15 &    1.260 \\
       10 & 2.43e-15 &    1.199 & 2.79e-15 &    0.970 \\
       11 & 2.21e-15 &    0.911 & 2.51e-15 &    0.902 \\
       12 & 2.14e-15 &    0.967 & 2.19e-15 &    0.873 \\
       13 & 2.58e-15 &    1.207 & 2.03e-15 &    0.925 \\
     \end{tabular}
     \vspace*{1ex}
     \caption{Two colliding acoustic pulses problem.
       $\|\cdot\|_{S}$-errors for pressure and ECR's of the iteration (\ref{eq:fp_sec2_mod}) during the first and 5th  time step using the second order scheme - 2nd step.
        $\veps=0.1$, $\nuhat = 0.9$, $\nu_{eff} = 9$.}
     \label{tab:ecr_O22_0p1_s}
   \end{center}
 \end{table}
 
 \begin{table}
   \begin{center}
     \begin{tabular}{l|ll|ll}
       \multicolumn{5}{c}{}\\ 
        & \multicolumn{2}{c}{1st time step} & \multicolumn{2}{c}{5th time step} \\
       $N$ & $W^{1,1}$-error & \mbox{ECR} & $W^{1,1}$-error & \mbox{ECR} \\
           \hline \\
       1 & 1.25e-05 &    1.000 & 5.84e-06 &    1.000 \\
       2 & 8.98e-09 &    0.001 & 5.96e-09 &    0.001 \\
       3 & 1.61e-09 &    0.180 & 1.34e-09 &    0.224 \\
       4 & 1.33e-09 &    0.827 & 1.77e-09 &    1.327 \\
       5 & 1.59e-09 &    1.193 & 1.78e-09 &    1.002 \\
       6 & 1.57e-09 &    0.989 & 1.07e-09 &    0.601 \\
       7 & 1.65e-09 &    1.048 & 1.33e-09 &    1.243 \\
       8 & 1.48e-09 &    0.898 & 1.57e-09 &    1.180 \\
       9 & 1.42e-09 &    0.955 & 1.63e-09 &    1.041 \\
       10 & 1.25e-09 &    0.886 & 1.37e-09 &    0.839 \\
       11 & 1.69e-09 &    1.344 & 1.36e-09 &    0.995 \\
       12 & 1.60e-09 &    0.951 & 1.56e-09 &    1.143 \\
       13 & 1.74e-09 &    1.087 & 1.29e-09 &    0.831 \\
     \end{tabular}
     \vspace*{1ex}
     \caption{Two colliding acoustic pulses problem.
       $\|\cdot\|_{W^{1,1}}$-errors for pressure and ECR's of the iteration (\ref{eq:fp_mod}) during the first and 5th time step using the first order scheme.
        $\veps=0.01$, $\nuhat = 0.9$, $\nu_{eff} = 90$.}
     \label{tab:ecr_O1_0p01_w}
   \end{center}
 \end{table}
 
 \begin{table}
   \begin{center}
     \begin{tabular}{l|ll|ll}
       \multicolumn{5}{c}{}\\ 
        & \multicolumn{2}{c}{1st time step} & \multicolumn{2}{c}{5th time step} \\
       $N$ & $S$-error & \mbox{ECR} & $S$-error & \mbox{ECR} \\
           \hline \\
       1 & 6.71e-06 &    1.000 & 3.71e-06 &    1.000 \\
       2 & 5.06e-09 &    0.001 & 4.15e-09 &    0.001 \\
       3 & 7.95e-10 &    0.157 & 7.23e-10 &    0.174 \\
       4 & 7.13e-10 &    0.898 & 9.90e-10 &    1.369 \\
       5 & 8.00e-10 &    1.121 & 9.48e-10 &    0.958 \\
       6 & 8.39e-10 &    1.048 & 5.74e-10 &    0.606 \\
       7 & 8.72e-10 &    1.040 & 7.35e-10 &    1.280 \\
       8 & 7.68e-10 &    0.881 & 8.62e-10 &    1.172 \\
       9 & 7.67e-10 &    0.998 & 8.78e-10 &    1.019 \\
       10 & 7.00e-10 &    0.913 & 7.70e-10 &    0.876 \\
       11 & 8.77e-10 &    1.253 & 7.56e-10 &    0.983 \\
       12 & 8.19e-10 &    0.934 & 8.37e-10 &    1.107 \\
       13 & 9.32e-10 &    1.137 & 6.92e-10 &    0.827 \\
     \end{tabular}
     \vspace*{1ex}
     \caption{Two colliding acoustic pulses problem.
       $\|\cdot\|_{S}$-errors for pressure and ECR's of the iteration (\ref{eq:fp_mod}) during the first and 5th  time step using the first order scheme.
        $\veps=0.01$, $\nuhat = 0.9$, $\nu_{eff} = 90$.}
     \label{tab:ecr_O1_0p01_s}
   \end{center}
 \end{table}
 
 \begin{table}
   \begin{center}
     \begin{tabular}{l|ll|ll}
       \multicolumn{5}{c}{}\\ 
        & \multicolumn{2}{c}{1st time step} & \multicolumn{2}{c}{5th time step} \\
       $N$ & $W^{1,1}$-error & \mbox{ECR} & $W^{1,1}$-error & \mbox{ECR} \\
           \hline \\
       1 & 7.71e-07 &    1.000 & 1.06e-06 &    1.000 \\
       2 & 9.53e-10 &    0.001 & 1.06e-09 &    0.001 \\
       3 & 8.74e-10 &    0.918 & 6.07e-10 &    0.575 \\
       4 & 9.89e-10 &    1.131 & 6.49e-10 &    1.068 \\
       5 & 8.16e-10 &    0.826 & 8.34e-10 &    1.286 \\
       6 & 9.36e-10 &    1.147 & 8.23e-10 &    0.987 \\
       7 & 1.01e-09 &    1.082 & 8.86e-10 &    1.076 \\
       8 & 9.42e-10 &    0.931 & 8.68e-10 &    0.981 \\
       9 & 9.34e-10 &    0.991 & 8.45e-10 &    0.973 \\
       10 & 8.56e-10 &    0.916 & 7.42e-10 &    0.879 \\
       11 & 8.32e-10 &    0.972 & 9.65e-10 &    1.299 \\
       12 & 8.82e-10 &    1.060 & 9.07e-10 &    0.940 \\
       13 & 8.92e-10 &    1.011 & 8.51e-10 &    0.939 \\
     \end{tabular}
     \vspace*{1ex}
     \caption{Two colliding acoustic pulses problem.
       $\|\cdot\|_{W^{1,1}}$-errors for pressure and ECR's of the iteration (\ref{eq:fp_sec1_mod}) during the first and 5th time step using the second order scheme - first step.
        $\veps=0.01$, $\nuhat = 0.9$, $\nu_{eff} = 90$.}
     \label{tab:ecr_O21_0p01_w}
   \end{center}
 \end{table}

 %

 \clearpage
 
 \begin{center} \textbf{Experimental Convergence Rates (EOC)} \end{center}
 
 In this subsection we study the experimental order of convergence (EOC). Since the  exact solution is not readily available, the EOC can
 be computed using numerical solutions on three grids of sizes $N_1,N_2:=N_1/2,N_3:=N_2/2$ in
 the following way
 \begin{equation}
   \label{eq:eoc}
   \mbox{EOC}:=\log_2\frac{\norm{u_{N_1}^n-u_{N_2}^n}}{\norm{u_{N_2}^n-u_{N_3}^n}}.
 \end{equation}
 Here, $u_{N}^n$ denotes the approximate solution obtained on a mesh of
 $N$ cells at time $t^n$. 
 In
 the following, we first set $\veps=0.1$ so that the problem is weakly
 compressible. In order to compute  EOC numbers, we have successively
 divided the computational domain into $40,80,\ldots,1280$ 
 cells. The final time is always set to $t=0.815$ so that the solution
 remains smooth. Hence, all the limiters are switched off and the
 slopes in the linear recovery are obtained using second order
 central differencing. In Tables~\ref{tab:eoc_rho_O1}-\ref{tab:eoc_p} we
 present the EOC numbers obtained for the $L^1,L^2$ and $L^\infty$ norms, respectively. The
 tables clearly demonstrate the first, respectively second order convergence of our schemes
 at $\veps=0.1$.
 
 
 \begin{table}
   \begin{center}
     \begin{tabular}{l|ll|ll|ll}
       \multicolumn{7}{c}{}\\
       $N$ & $L^1$ error & EOC        & $L^2$ error & EOC        &
       $L^\infty$ error & EOC \\ \hline \hline
  80 & 6.38e-05 & 1.00 & 1.27e-03 & 1.00 & 6.08e-02 & 1.00 \\
       160 & 1.63e-05 & 1.97 & 3.82e-04 & 1.74 & 1.92e-02 & 1.66 \\
       320 & 4.45e-06 & 1.87 & 1.74e-04 & 1.14 & 1.46e-02 & 0.39 \\
       640 & 1.22e-06 & 1.86 & 7.05e-05 & 1.30 & 7.65e-03 & 0.93 \\
       1280 & 3.36e-07 & 1.86 & 2.74e-05 & 1.36 & 4.02e-03 & 0.93 \\
     \end{tabular}
     \vspace*{1ex}
     \caption{Two colliding acoustic pulses problem.
       $L^1,L^2$ and $L^\infty$ errors for density and EOC for the first order scheme (\ref{first_update});
        $\veps=0.1$, $\nuhat = 0.9$, $\nu_{eff} = 9$.}
     \label{tab:eoc_rho_O1}
   \end{center}
 \end{table}
 
 \begin{table}
   \begin{center}
     \begin{tabular}{l|ll|ll|ll}
       \multicolumn{7}{c}{}\\
       $N$ & $L^1$ error & EOC        & $L^2$ error & EOC        &
       $L^\infty$ error & EOC \\ \hline \hline
     80 & 1.27e-03 & 1.00 & 1.93e-02 & 1.00 & 5.10e-01 & 1.00 \\
       160 & 3.00e-04 & 2.08 & 5.84e-03 & 1.72 & 1.72e-01 & 1.57 \\
       320 & 8.70e-05 & 1.79 & 2.59e-03 & 1.17 & 1.26e-01 & 0.45 \\
       640 & 2.14e-05 & 2.02 & 9.21e-04 & 1.49 & 6.76e-02 & 0.90 \\
       1280 & 5.25e-06 & 2.03 & 3.27e-04 & 1.49 & 3.57e-02 & 0.92 \\
       \end{tabular}
     \vspace*{1ex}
     \caption{Analogous results as in Table~\ref{tab:eoc_rho_O1}, but for velocity.}
     \label{tab:eoc_u_O1}
   \end{center}
 \end{table}
 \begin{table}
   \begin{center}
     \begin{tabular}{l|ll|ll|ll}
       \multicolumn{7}{c}{}\\
       $N$ & $L^1$ error & EOC        & $L^2$ error & EOC        &
       $L^\infty$ error & EOC \\ \hline \hline
       80 & 4.00e-05 & 1.00 & 5.92e-04 & 1.00 & 1.31e-02 & 1.00 \\
       160 & 1.97e-05 & 1.02 & 4.45e-04 & 0.41 & 1.68e-02 & -0.35 \\
       320 & 5.20e-06 & 1.92 & 1.88e-04 & 1.25 & 1.30e-02 & 0.37 \\
       640 & 1.42e-06 & 1.87 & 7.96e-05 & 1.24 & 7.86e-03 & 0.72 \\
       1280 & 3.84e-07 & 1.88 & 3.14e-05 & 1.34 & 4.30e-03 & 0.87 \\
     \end{tabular}
     \vspace*{1ex}
     \caption{Analogous results as in Table~\ref{tab:eoc_rho_O1}, but for pressure.}
     \label{tab:eoc_p_O1}
   \end{center}
 \end{table}
 \begin{table}
   \begin{center}
     \begin{tabular}{l|ll|ll|ll}
       \multicolumn{7}{c}{}\\
       $N$ & $L^1$ error & EOC        & $L^2$ error & EOC        &
       $L^\infty$ error & EOC \\ \hline \hline
       80 & 1.12e-04 & 1.00 & 1.72e-03 & 1.00 & 3.98e-02 & 1.00 \\
       160 & 1.38e-05 & 3.02 & 3.20e-04 & 2.43 & 1.15e-02 & 1.79 \\
       320 & 2.19e-06 & 2.66 & 7.61e-05 & 2.07 & 4.32e-03 & 1.41 \\
       640 & 3.74e-07 & 2.55 & 1.88e-05 & 2.02 & 1.54e-03 & 1.49 \\
       1280 & 6.15e-08 & 2.60 & 4.64e-06 & 2.02 & 5.46e-04 & 1.50 \\
     \end{tabular}
     \vspace*{1ex}
     \caption{Two colliding acoustic pulses problem.
       $L^1,L^2$ and $L^\infty$ errors for density and EOC of the second order scheme  (\ref{eq:step1}), (\ref{eq:step2});
        $\veps=0.1$, $\nuhat = 0.9$, $\nu_{eff} = 9$.}
     \label{tab:eoc_rho}
   \end{center}
 \end{table}
 \begin{table}
   \begin{center}
     \begin{tabular}{l|ll|ll|ll}
       \multicolumn{7}{c}{}\\
       $N$ & $L^1$ error & EOC        & $L^2$ error & EOC        &
       $L^\infty$ error & EOC \\ \hline \hline
       80 & 7.98e-04 & 1.00 & 1.39e-02 & 1.00 & 3.92e-01 & 1.00 \\
       160 & 1.65e-04 & 2.28 & 4.37e-03 & 1.67 & 2.06e-01 & 0.93 \\
       320 & 2.66e-05 & 2.63 & 1.03e-03 & 2.09 & 7.22e-02 & 1.51 \\
       640 & 3.99e-06 & 2.74 & 2.22e-04 & 2.21 & 2.17e-02 & 1.74 \\
       1280 & 5.97e-07 & 2.74 & 4.85e-05 & 2.19 & 6.89e-03 & 1.65 \\
       \end{tabular}
     \vspace*{1ex}
     \caption{Analogous results as in Table~\ref{tab:eoc_rho}, but for velocity.}
     \label{tab:eoc_u}
   \end{center}
 \end{table}
 \begin{table}
   \begin{center}
     \begin{tabular}{l|ll|ll|ll}
       \multicolumn{7}{c}{}\\
       $N$ & $L^1$ error & EOC        & $L^2$ error & EOC        &
       $L^\infty$ error & EOC \\ \hline \hline
       80 & 1.63e-04 & 1.00 & 2.67e-03 & 1.00 & 6.08e-02 & 1.00 \\
       160 & 1.80e-05 & 3.17 & 4.35e-04 & 2.62 & 1.58e-02 & 1.94 \\
       320 & 2.69e-06 & 2.74 & 9.54e-05 & 2.19 & 5.67e-03 & 1.48 \\
       640 & 4.67e-07 & 2.53 & 2.39e-05 & 2.00 & 2.24e-03 & 1.34 \\
       1280 & 7.68e-08 & 2.60 & 5.90e-06 & 2.02 & 8.25e-04 & 1.44 \\
     \end{tabular}
     \vspace*{1ex}
     \caption{Analogous results as in Table~\ref{tab:eoc_rho}, but for pressure.}
     \label{tab:eoc_p}
   \end{center}
 \end{table}
 
 Next, we choose a very small value for $\veps$, $\veps = 0.01$.  The results are presented in
 Tables~\ref{tab:eoc_rho1_O1}-\ref{tab:eoc_p1}. Note that we can observe clearly the first and the second order experimental order of convergence of our schemes, despite a small value for $\veps$.
 
 \begin{table}
   \begin{center}
     \begin{tabular}{l|ll|ll|ll}
       \multicolumn{7}{c}{}\\
       $N$ & $L^1$ error & EOC        & $L^2$ error & EOC        &
       $L^\infty$ error & EOC \\ \hline \hline
       80 & 1.63e-06 & 1.00 & 2.64e-05 & 1.00 & 8.49e-04 & 1.00 \\
       160 & 2.59e-05 & -3.99 & 5.11e-04 & -4.27 & 1.44e-02 & -4.08 \\
       320 & 1.42e-05 & 0.86 & 4.30e-04 & 0.25 & 2.54e-02 & -0.82 \\
       640 & 1.05e-06 & 3.76 & 4.63e-05 & 3.21 & 5.03e-03 & 2.34 \\
       1280 & 7.41e-08 & 3.83 & 4.65e-06 & 3.32 & 4.91e-04 & 3.36 \\
       2560 & 1.67e-08 & 2.15 & 1.48e-06 & 1.65 & 2.45e-04 & 1.00 \\
       5120 & 2.86e-09 & 2.54 & 3.67e-07 & 2.02 & 8.69e-05 & 1.50 \\
     \end{tabular}
     \vspace*{1ex}
     \caption{Two colliding acoustic pulses problem.
       $L^1,L^2$ and $L^\infty$ errors for deinsity and EOC of the first order scheme (\ref{first_update});
        $\veps=0.01$, $\nuhat = 0.9$, $\nu_{eff} = 90$.}
     \label{tab:eoc_rho1_O1}
 
   \end{center}
 \end{table}
 \begin{table}
   \begin{center}
     \begin{tabular}{l|ll|ll|ll}
       \multicolumn{7}{c}{}\\
       $N$ & $L^1$ error & EOC        & $L^2$ error & EOC        &
       $L^\infty$ error & EOC \\ \hline \hline
       80 & 9.75e-05 & 1.00 & 1.44e-03 & 1.00 & 3.10e-02 & 1.00 \\
       160 & 1.21e-03 & -3.64 & 2.33e-02 & -4.02 & 5.51e-01 & -4.15 \\
       320 & 1.33e-03 & -0.13 & 3.85e-02 & -0.73 & 1.61e+00 & -1.55 \\
       640 & 1.59e-04 & 3.06 & 6.46e-03 & 2.58 & 4.14e-01 & 1.96 \\
       1280 & 4.78e-05 & 1.74 & 2.78e-03 & 1.21 & 2.26e-01 & 0.88 \\
       2560 & 1.27e-05 & 1.91 & 1.05e-03 & 1.40 & 1.24e-01 & 0.87 \\
       5120 & 3.18e-06 & 2.00 & 3.70e-04 & 1.51 & 6.32e-02 & 0.97 \\
       \end{tabular}
     \vspace*{1ex}
     \caption{Analogous results as in Table~\ref{tab:eoc_rho1_O1}, but for velocity.}
     \label{tab:eoc_u1_O1}
   \end{center}
 \end{table}
 
 \clearpage
 
 \begin{table}
   \begin{center}
     \begin{tabular}{l|ll|ll|ll}
       \multicolumn{7}{c}{}\\
       $N$ & $L^1$ error & EOC        & $L^2$ error & EOC        &
       $L^\infty$ error & EOC \\ \hline \hline
       80 & 9.44e-07 & 1.00 & 1.21e-05 & 1.00 & 2.11e-04 & 1.00 \\
       160 & 2.15e-05 & -4.51 & 4.32e-04 & -5.16 & 1.17e-02 & -5.80 \\
       320 & 7.73e-06 & 1.47 & 2.26e-04 & 0.93 & 1.09e-02 & 0.11 \\
       640 & 9.84e-07 & 2.97 & 4.35e-05 & 2.38 & 3.21e-03 & 1.76 \\
       1280 & 1.00e-07 & 3.30 & 5.89e-06 & 2.88 & 6.01e-04 & 2.42 \\
       2560 & 2.46e-08 & 2.02 & 2.19e-06 & 1.43 & 3.88e-04 & 0.63 \\
       5120 & 3.85e-09 & 2.67 & 5.00e-07 & 2.13 & 1.25e-04 & 1.63 \\
     \end{tabular}
     \vspace*{1ex}
     \caption{Analogous results as in Table~\ref{tab:eoc_rho1_O1}, but for pressure.}
     \label{tab:eoc_p1_O1}
   \end{center}
 \end{table}
 
 \begin{table}
   \begin{center}
     \begin{tabular}{l|ll|ll|ll}
       \multicolumn{7}{c}{}\\
       $N$ & $L^1$ error & EOC        & $L^2$ error & EOC        &
       $L^\infty$ error & EOC \\ \hline \hline
       80 & 4.73e-07 & 1.00 & 7.81e-06 & 1.00 & 2.51e-04 & 1.00 \\
       160 & 8.41e-06 & -4.15 & 2.03e-04 & -4.70 & 9.83e-03 & -5.29 \\
       320 & 2.60e-06 & 1.69 & 9.19e-05 & 1.15 & 6.98e-03 & 0.49 \\
       640 & 1.70e-06 & 0.62 & 7.37e-05 & 0.32 & 7.50e-03 & -0.10 \\
       1280 & 2.46e-07 & 2.78 & 1.54e-05 & 2.26 & 2.10e-03 & 1.83 \\
       2560 & 2.23e-08 & 3.47 & 2.22e-06 & 2.79 & 4.52e-04 & 2.22 \\
       5120 & 2.65e-09 & 3.07 & 4.56e-07 & 2.29 & 1.42e-04 & 1.67 \\
     \end{tabular}
     \vspace*{1ex}
     \caption{Two colliding acoustic pulses problem.
       $L^1,L^2$ and $L^\infty$ errors for density and EOC of the second order scheme  (\ref{eq:step1}), (\ref{eq:step2});
        $\veps=0.01$, $\nuhat = 0.9$, $\nu_{eff} = 90$.}
     \label{tab:eoc_rho1}
   \end{center}
 \end{table}
 
 \clearpage
 
 \begin{table}
   \begin{center}
     \begin{tabular}{l|ll|ll|ll}
       \multicolumn{7}{c}{}\\
       $N$ & $L^1$ error & EOC        & $L^2$ error & EOC        &
       $L^\infty$ error & EOC \\ \hline \hline
       80 & 1.13e-05 & 1.00 & 1.60e-04 & 1.00 & 3.39e-03 & 1.00 \\
       160 & 1.34e-03 & -6.89 & 2.89e-02 & -7.50 & 9.13e-01 & -8.08 \\
       320 & 8.25e-04 & 0.70 & 2.38e-02 & 0.28 & 9.87e-01 & -0.11 \\
       640 & 9.01e-05 & 3.20 & 3.77e-03 & 2.66 & 2.38e-01 & 2.05 \\
       1280 & 2.11e-05 & 2.09 & 1.42e-03 & 1.41 & 1.64e-01 & 0.54 \\
       2560 & 3.40e-06 & 2.64 & 3.24e-04 & 2.13 & 5.04e-02 & 1.70 \\
       5120 & 5.58e-07 & 2.61 & 7.41e-05 & 2.13 & 1.36e-02 & 1.89 \\
       \end{tabular}
     \vspace*{1ex}
     \caption{Analogous results as in Table~\ref{tab:eoc_rho1}, but for velocity.}
     \label{tab:eoc_u1}
   \end{center}
 \end{table}
 \begin{table}
   \begin{center}
     \begin{tabular}{l|ll|ll|ll}
       \multicolumn{7}{c}{}\\
       $N$ & $L^1$ error & EOC        & $L^2$ error & EOC        &
       $L^\infty$ error & EOC \\ \hline \hline
       80 & 3.16e-07 & 1.00 & 4.47e-06 & 1.00 & 8.40e-05 & 1.00 \\
       160 & 1.34e-05 & -5.40 & 2.65e-04 & -5.89 & 6.81e-03 & -6.34 \\
       320 & 3.05e-06 & 2.14 & 9.41e-05 & 1.49 & 4.63e-03 & 0.56 \\
       640 & 2.07e-06 & 0.55 & 8.86e-05 & 0.09 & 6.86e-03 & -0.57 \\
       1280 & 3.24e-07 & 2.68 & 2.05e-05 & 2.11 & 1.99e-03 & 1.78 \\
       2560 & 3.01e-08 & 3.43 & 2.99e-06 & 2.78 & 5.40e-04 & 1.88 \\
       5120 & 3.70e-09 & 3.03 & 6.27e-07 & 2.25 & 1.95e-04 & 1.47 \\
     \end{tabular}
     \vspace*{1ex}
     \caption{Analogous results as in Table~\ref{tab:eoc_rho1}, but for pressure.}
     \label{tab:eoc_p1}
   \end{center}
 \end{table}
 \begin{center} \textbf{Weakly Compressible Flow} \end{center}
 In the third experiment we measure the efficacy of our newly developed scheme to capture weakly
 compressible flow features. We set $\veps=1/11$ as  in
 \cite{klein}. The computational domain is divided into $440$ equal
 mesh points and the CFL number is set to $\nuhat=0.9$ so that
 $\nu_{eff}=9.9$. The boundary conditions are periodic and the slopes
 in the reconstruction are computed using the minmod recovery with
 $\theta=2$. The plots of the pressure obtained using the second order
 scheme at times $t=0.815$ and $t=1.63$ are given in
 Figure~\ref{fig:pulses}, where the initial pressure distributions are
 also plotted for comparison. Note that the initial data represent two
 pulses, where the one on the left moves to the left and the one on the
 right moves to the right. The data being symmetric about $x=0$, the
 periodic boundary conditions act like reflecting boundary
 conditions. Therefore, the pulses reflect back and they superimpose at
 time $t=0.815$, which produces the maximum pressure at $x=0$. The
 pulses separate and move apart and at time $t=1.63$ they assume almost
 like the initial configuration. However, as a result of the weakly
 nonlinear effects, the pulses start to steepen and two shocks are
 about to form at $x=-18.5$ and $x=18.5$ which can be seen from the
 plot at $t=1.63$.
 
 \begin{figure}[htb]
   \centering
   \includegraphics[height=0.275\textheight]{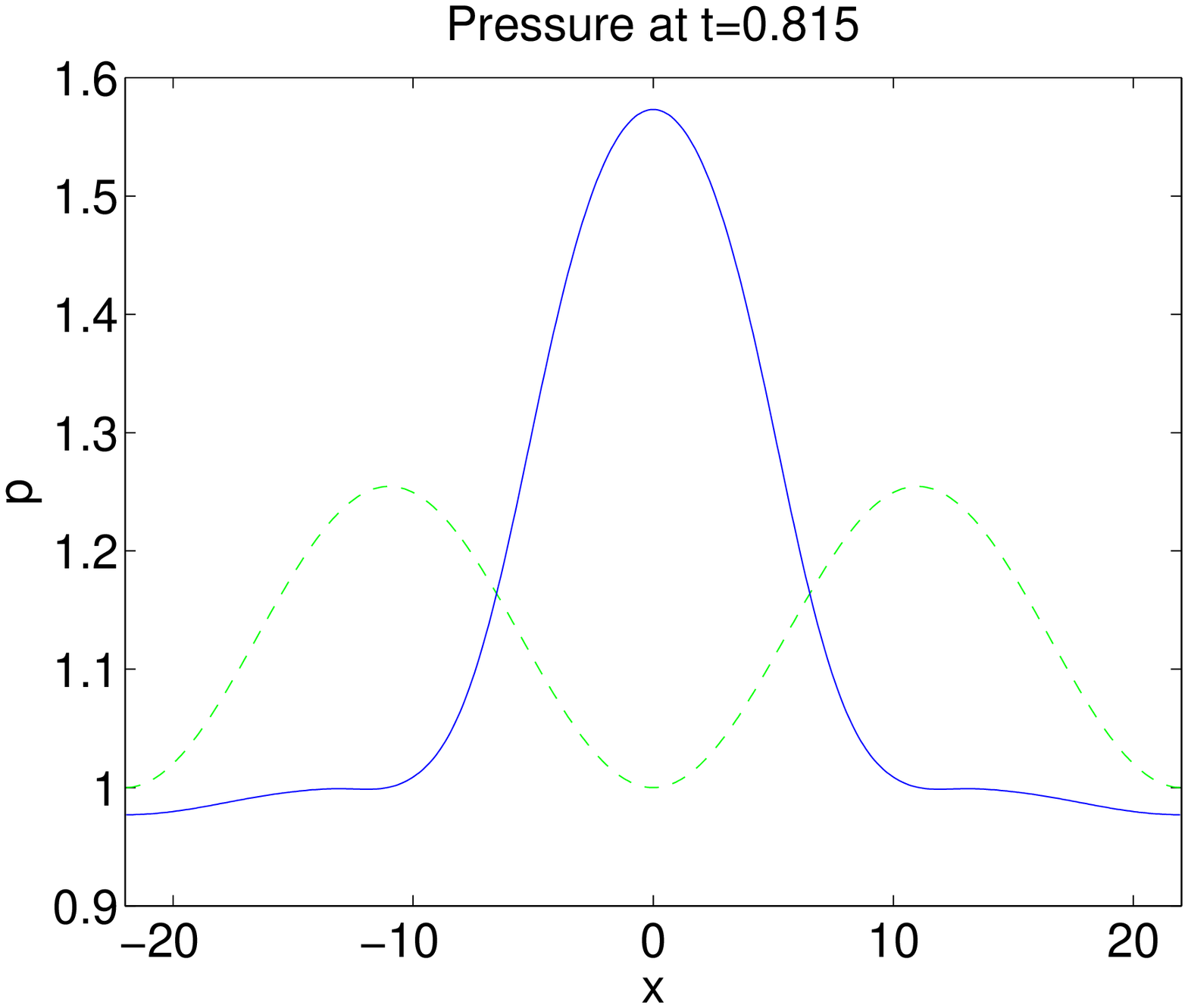}
   \includegraphics[height=0.275\textheight]{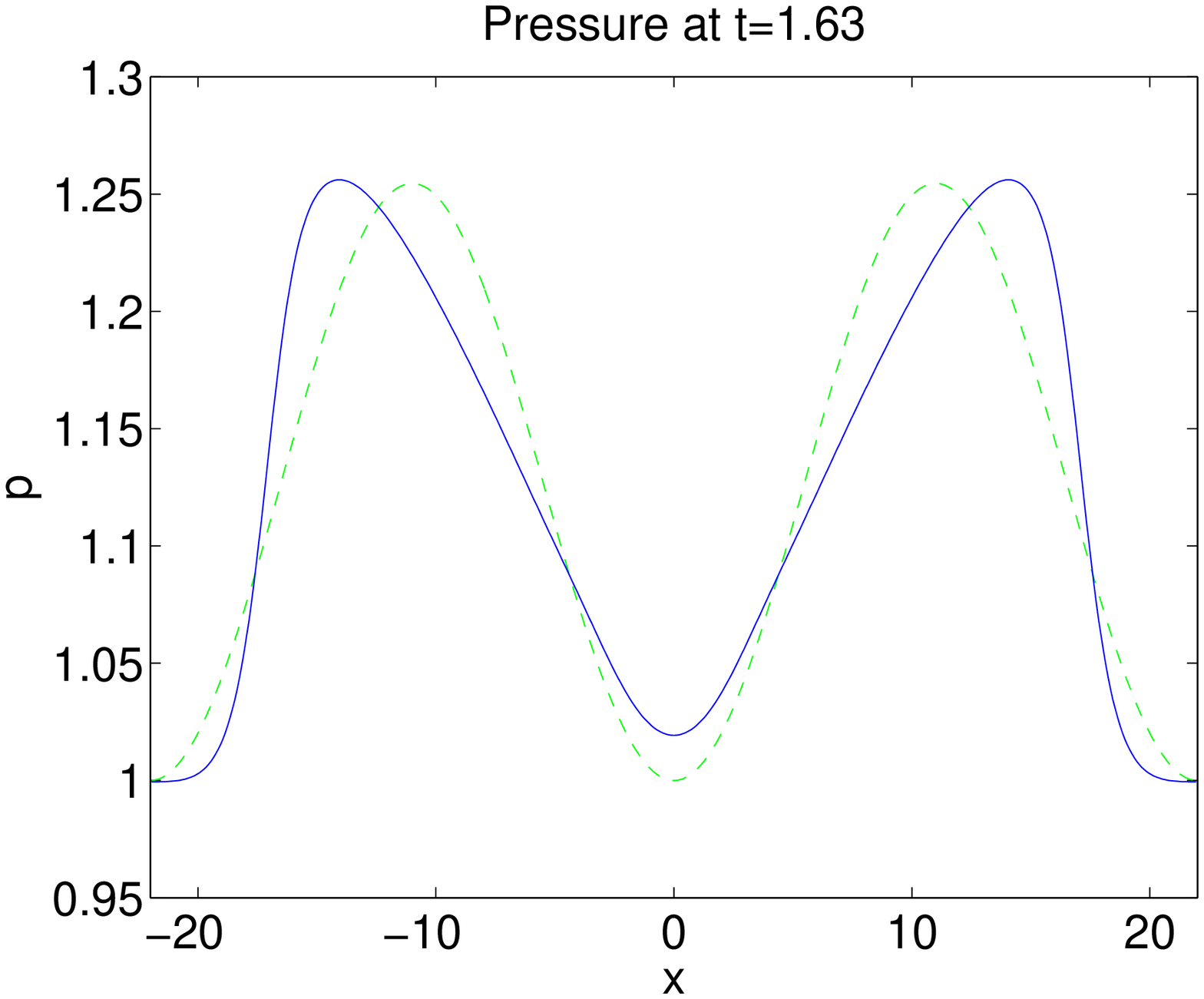}
   \caption{Two colliding acoustic pulses problem computed with the
     second order method. On the left: pressure distribution at
     $t=0.815$. On the right: pressure at $t=1.63$. Dotted line is the
     initial pressure distribution. Here, $\veps=1/11$ and
     $\hat{\nu}=0.9,\nu_{eff}=9.9$, $\cstab=1/12$.}
   \label{fig:pulses}
 \end{figure}
 
 \subsubsection{Density Layering Problem}
 \label{sec:layering}
 
 This test problem is also taken from \cite{klein} and it models the
 propagation of a density fluctuation superimposed in a large
 amplitude, short wavelength pulse. The initial data read
 \begin{align*}
   \rho(x,0)&=\rho_0+\frac{1}{2}\veps\rho_1\left(1+\cos\left(\frac{\pi
         x}{L}\right)\right)+\rho_2\Phi(x)\sin\left(\frac{40\pi
       x}{L}\right),
   \nn
   \rho_0&=1.0, \ \rho_1=2.0, \ \rho_2=0.5,\\
   u(x,0)&=\frac{1}{2}u_0\left(1+\cos\left(\frac{\pi
         x}{L}\right)\right),\ u_0=2\sqrt{\gamma},\\
   p(x,0)&=p_0+\frac{1}{2}\veps p_1\left(1+\cos\left(\frac{\pi
         x}{L}\right)\right),\ p_0=1.0, \ p_1=2\gamma.
 \end{align*}
 The domain is $-L\leq x\leq L=1/\veps$ with $\veps=1/51$. The function
 $\Phi(x)$ is defined by
 \begin{equation*}
   \Phi(x)=
   \begin{cases}
     0, & -\frac{1}{L}\leq x\leq 0,\\
     \frac{1}{2}\left(1-\cos\left(\frac{5\pi
           x}{L}\right)\right), & 0\leq x \leq \frac{2L}{5}, \\
     0, & x>\frac{2L}{5}.
   \end{cases}
 \end{equation*}
 The computation domain is divided into $1020$ equal mesh cells and the
 CFL number is $\nuhat=0.45$, hence $\nu_{eff}=22.95$ . The linear
 reconstruction is performed using the minmod limiter with $\theta=2$
 and the boundary conditions are set to periodic. Note that the initial
 data describe a density layering of large amplitude and small
 wavelengths, which is driven by the motion of a right-going periodic
 acoustic wave with long wavelength. The main aspect of this test case
 is the advection of the density distribution and its nonlinear
 interaction with the acoustic waves. Figure~\ref{fig:layering} show
 the solutions obtained using the second order scheme at time $t=5.071$
 and the initial distributions. Due to the high stifness, we multiplied the pressure correcture with the additional factor $1.4$. It can be noted that up to this time
 the acoustic wave transport the density layer about $2.5$ units and
 the shape of the layer is undistorted. As in the previous problem, due
 to the weakly nonlinear effects, the pulse starts to steepen, leading
 to shock formation.
 
 \begin{figure}[htb]
   \centering
   \includegraphics[height=0.3\textheight]{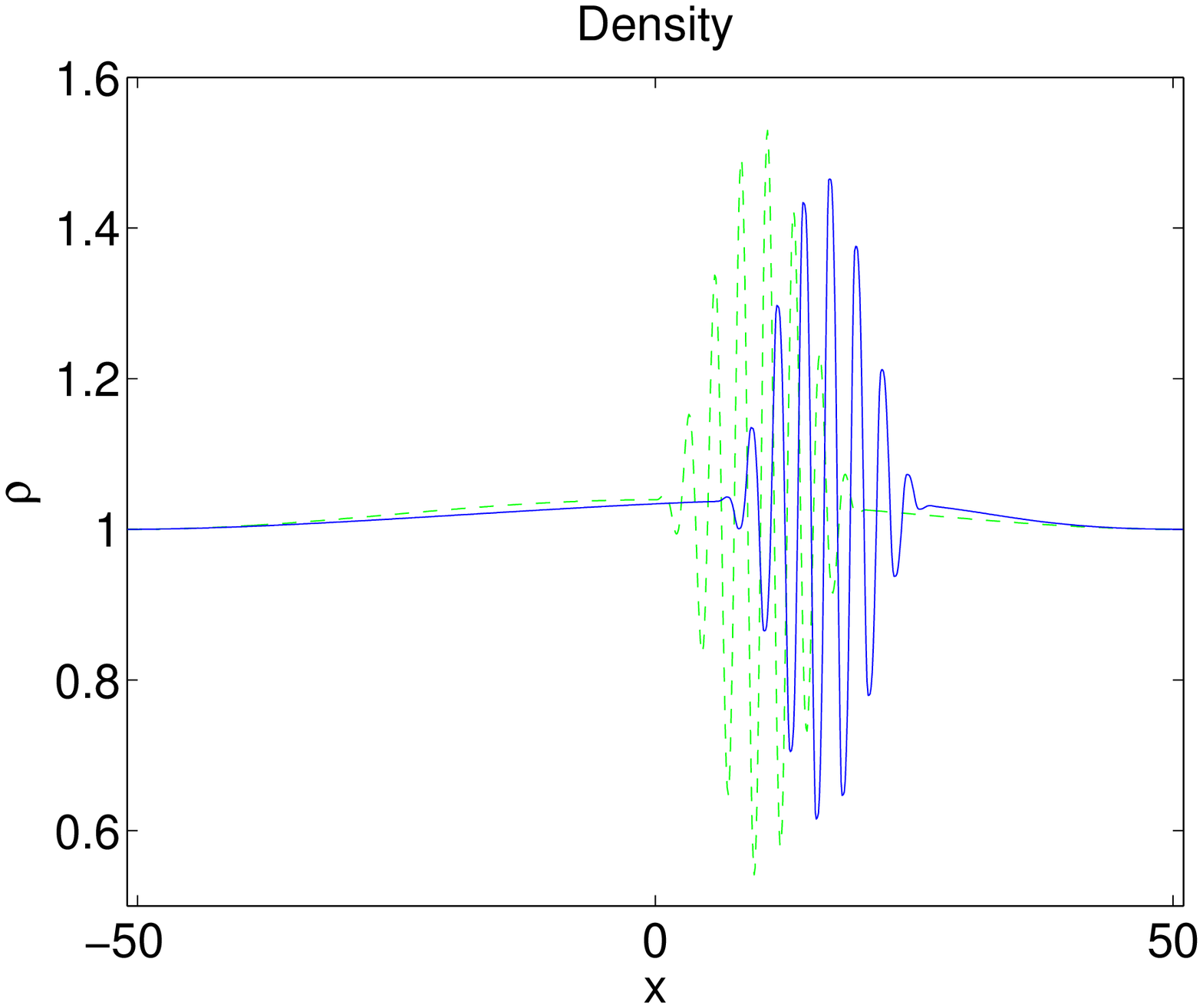}
   \includegraphics[height=0.3\textheight]{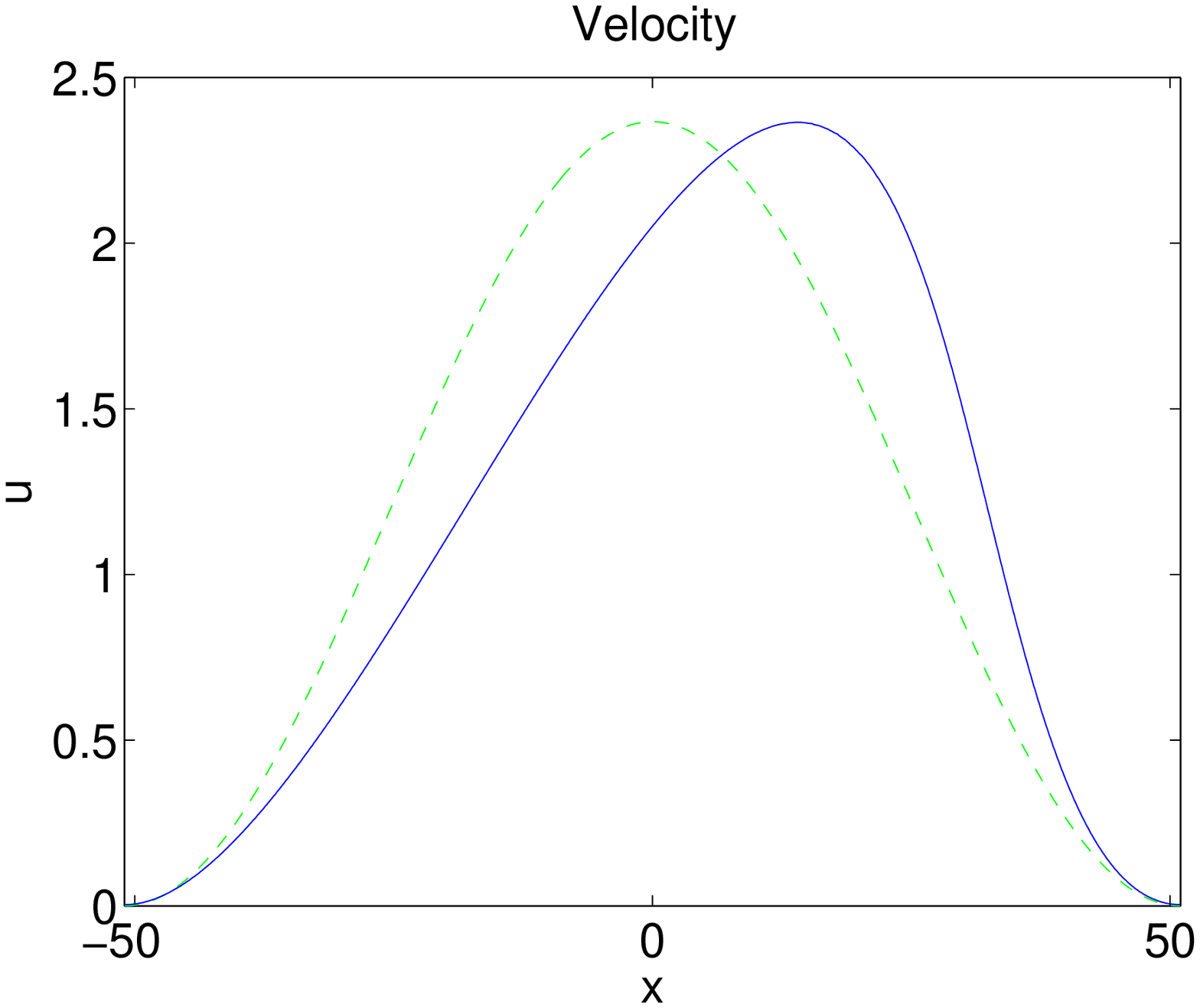}
   \includegraphics[height=0.3\textheight]{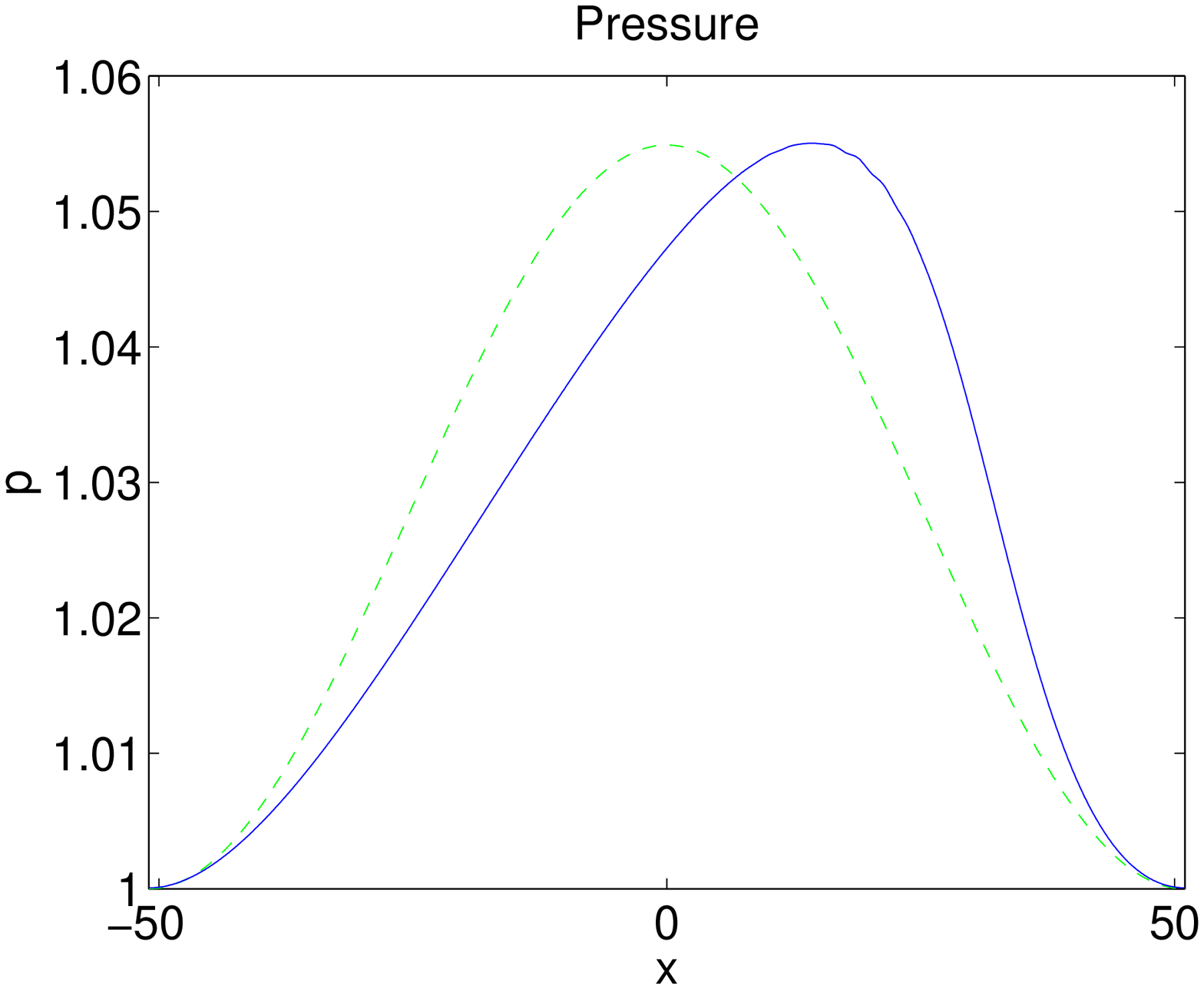}
   \includegraphics[height=0.3\textheight]{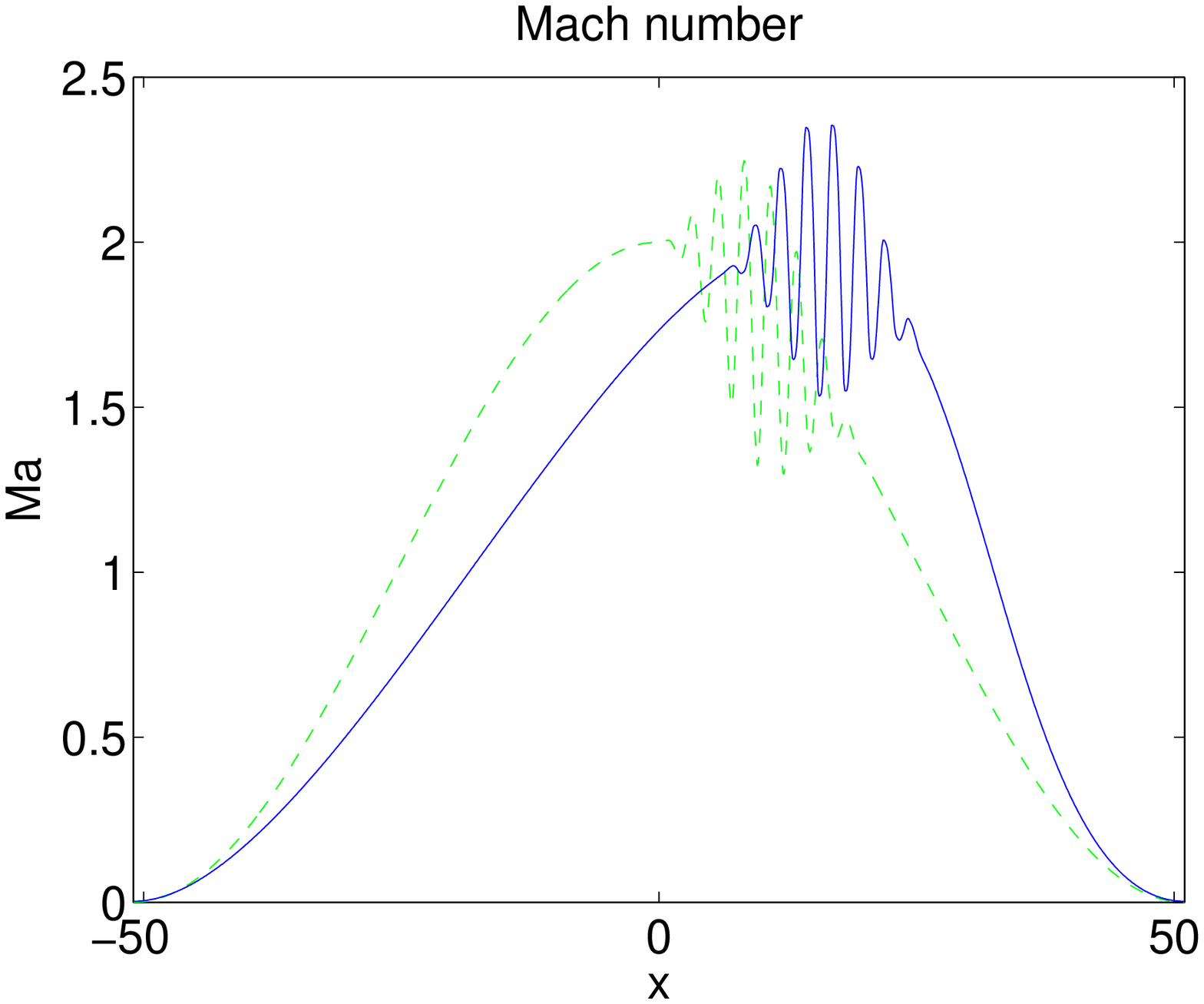}
   \caption{The results of density layering problem at
     $t=5.071$. Dotted lines: initial solution. Solid lines: solution
     obtained by the second order scheme. The CFL numbers are
     $\hat{\nu}=0.45,\nu_{eff}=22.95$, $\cstab=1/12$ and $\veps=1/51$.}
   \label{fig:layering}
 \end{figure}
 
 We use this test also to make a comparison of the first and second
 order schemes. It can be noted from Figure~\ref{fig:layering} that the
 second order scheme preserves the amplitude of the layer, without much
 damping. In order to make the comparison, in
 Figure~\ref{fig:layering_comp}, a zoomed portion of the layer obtained
 by the second order scheme and the first order scheme with the same
 problem parameters is plotted. In the figure the results of first
 order scheme is barely visible due to the excess diffusion.
 
 \begin{figure}[htb]
   \centering
     \includegraphics[height=0.3\textheight]{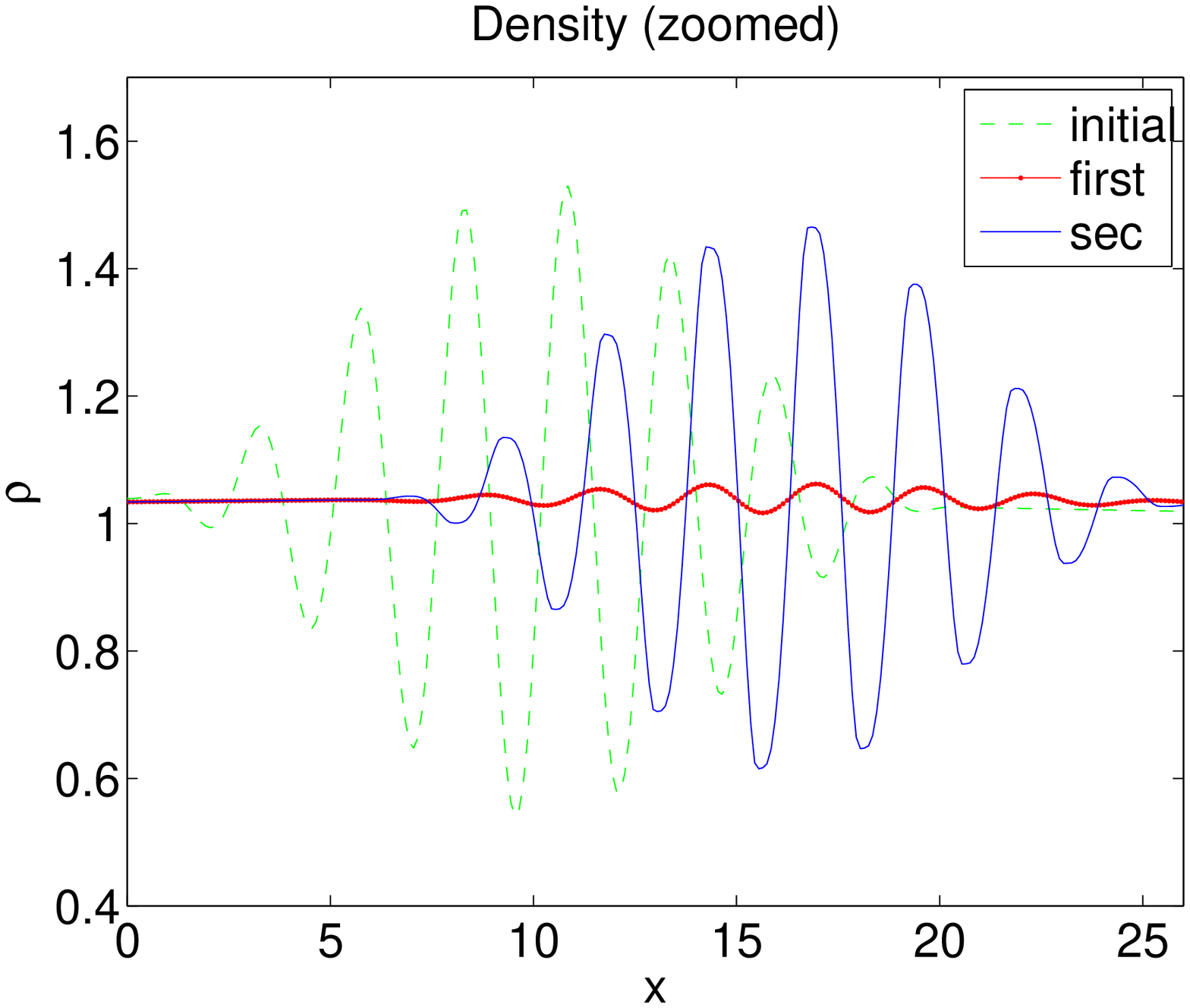}
   \caption{Density layering problem. Comparison of first and second
     order methods at $t=5.071$. $\veps=1/51$, $\hat{\nu}=0.45$,
     $\nu_{eff}=22.95$. $\cstab=1/6$ (first order) respectively $\cstab=1/12$
     (second order method).}
   \label{fig:layering_comp}
 \end{figure}
 
 \subsection{Test Problems in the 2-D Case}
 \label{sec:2-d}
 
 \subsubsection{Gresho Vortex Problem}
 \label{sec:vortex}
 
 In this problem we perform the simulation of the Gresho vortex
 \cite{gresho,liska}. A rotating vortex is positioned at the centre
 $(0.5,0.5)$ of the computational domain $[0,1]\times [0,1]$. The
 initial conditions are specified in terms of the radial distance
 $r=\sqrt{(x-0.5)^2+(y-0.5)^2}$ in the form
 \begin{align*}
   \rho(x,y,0)&=1.0,\\
   u(x,y,0)&=-u_{\phi}(r)\sin\phi,\\
   v(x,y,0)&=u_{\phi}(r)\cos\phi,\\
   p(x,y,0)&=
   \begin{cases}
     p_0+12.5r^2, & \mbox{if} \ 0\leq r\leq 0.2, \\
     p_0+4-4.0\log(0.2)+12.5r^2-20r+4\log r, & \mbox{if} \ 0.2\leq
     r\leq 0.4,\\
     p_0-2+4\log 2, & \mbox{otherwise}.
   \end{cases}
 \end{align*}
 Here, $\tan\phi=(y-0.5)/(x-0.5)$ and the angular velocity $u_\phi$ is
 defined by
 \begin{equation*}
   u_\phi(r)=
   \begin{cases}
     5r, & \mbox{if} \ 0\leq r\leq 0.2, \\
     2-5r, & \mbox{if} \ 0.2\leq r\leq 0.4,\\
     0, & \mbox{otherwise}.
   \end{cases}
 \end{equation*}
 Note that the above data is a low Mach variant of the one proposed in
 \cite{liska} by changing the parameter $p_0=\rho/(\gamma\veps^2)$ as
 given in \cite{happenhofer} to obtain a low Mach number flow. In this
 problem we have set $\veps=0.1$.
 
 The computational domain is divided into $80\times80$ equal mesh cells.
 The boundary conditions to the left and right side are periodic. To the top and bottom sides we apply wall boundary conditions. 
 The slopes in the recovery procedure are obtained by simple
 central differences without using any limiters. 
 In order to stabilize the scheme, we increased the stabilization parameter in
 \eqref{eq:cstab} to $\cstab= \frac1{12}10^4$.
 In Figure~\ref{fig:vortex} the pseudo-colour plots of the Mach numbers are
 plotted which shows that very low Mach numbers of the order $0.01$ are
 developed in the problem. We have set the CFL number $\nuhat=0.45$ and
 therefore, the effective CFL number is $4.5$. In the figure we have
 plotted the results at times $t=1.0,2.0$ and $3.0$ so that the vortex
 completes one, two and three rotations completely. A comparison with
 the initial Mach number distribution shows that the scheme preserves
 the shape of the vortex very well.
 
 \begin{figure}[htbp]
   \centering
   \includegraphics[height=0.275\textheight]{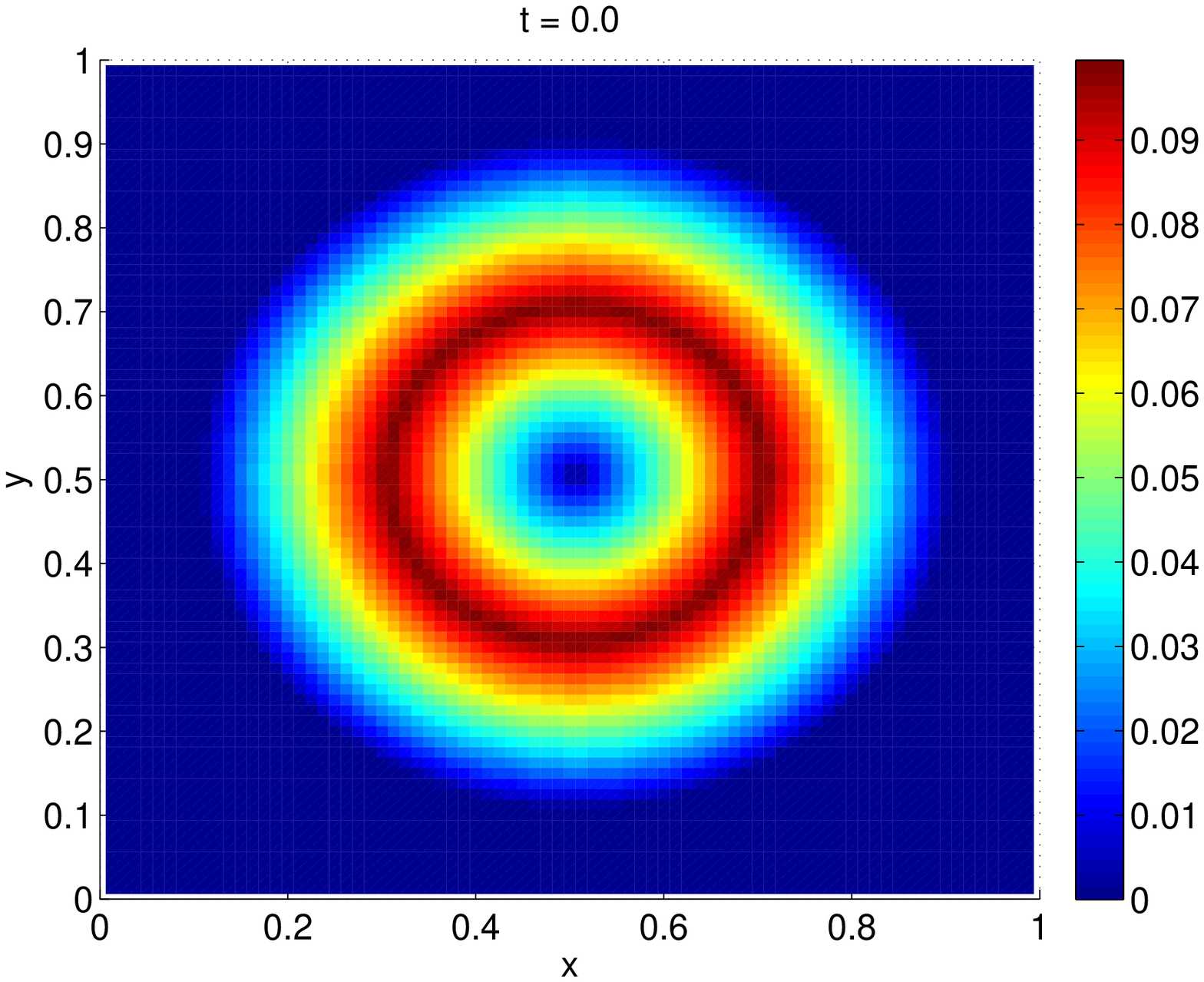}
   \includegraphics[height=0.275\textheight]{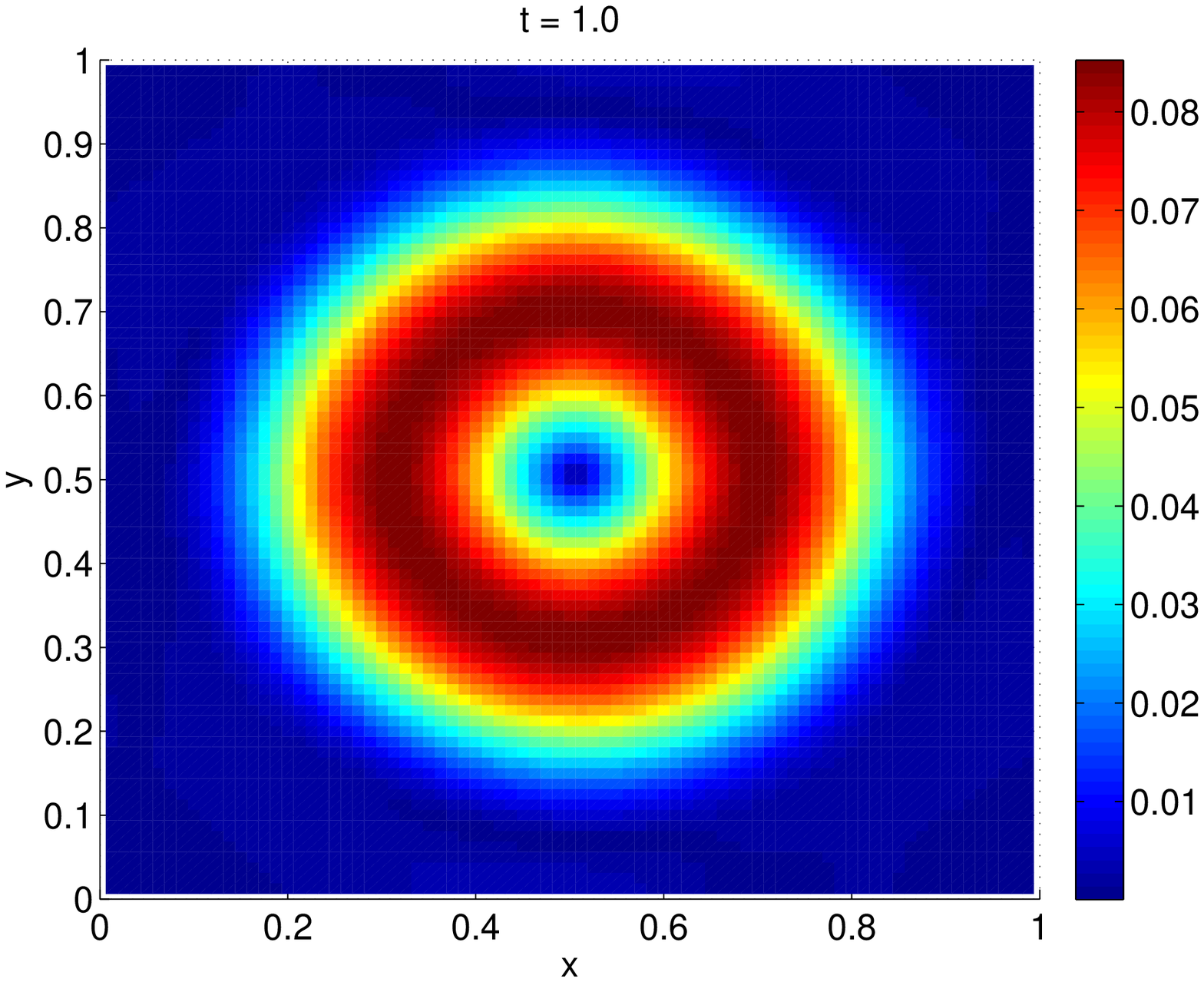}
   \includegraphics[height=0.275\textheight]{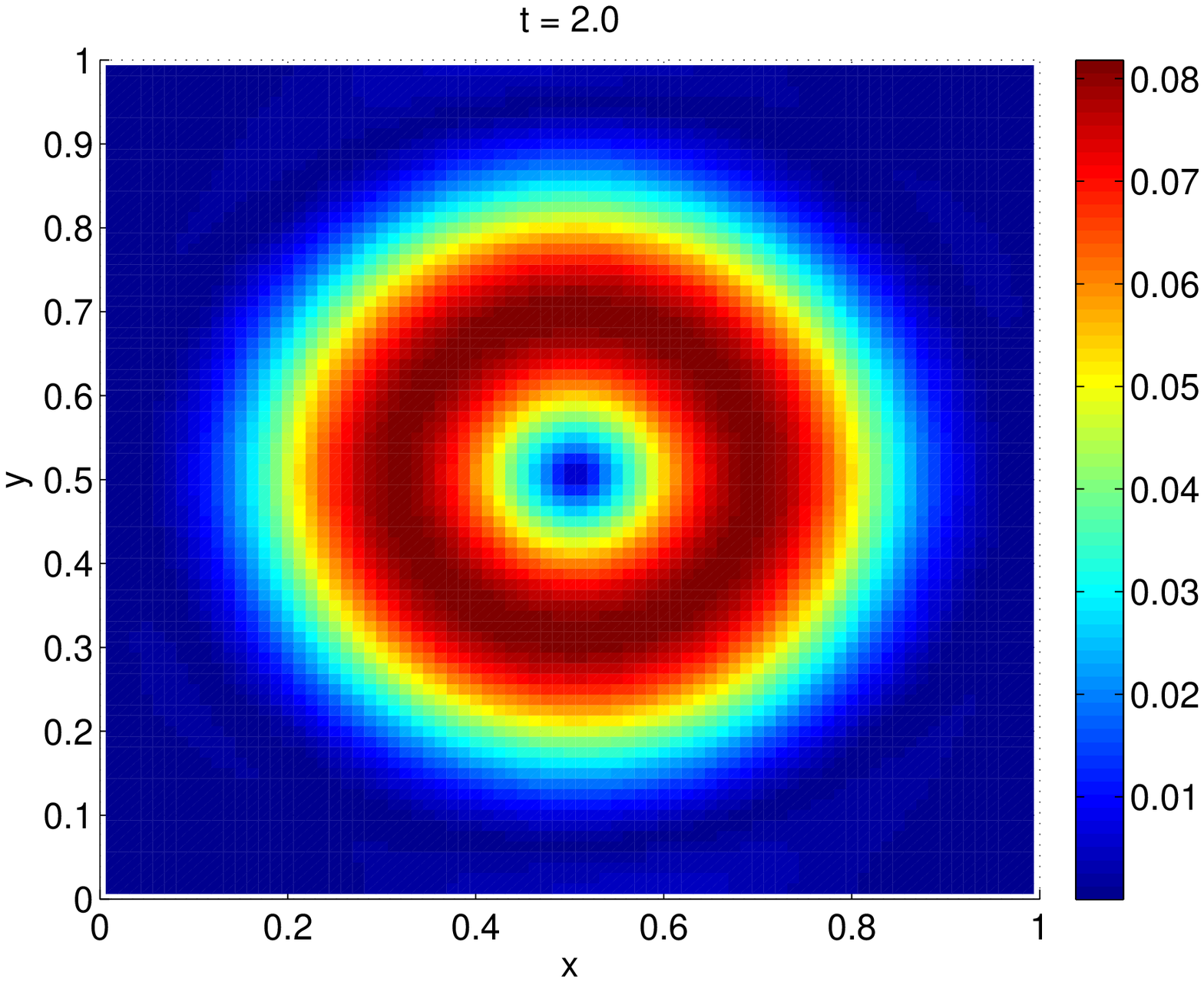}
   \includegraphics[height=0.275\textheight]{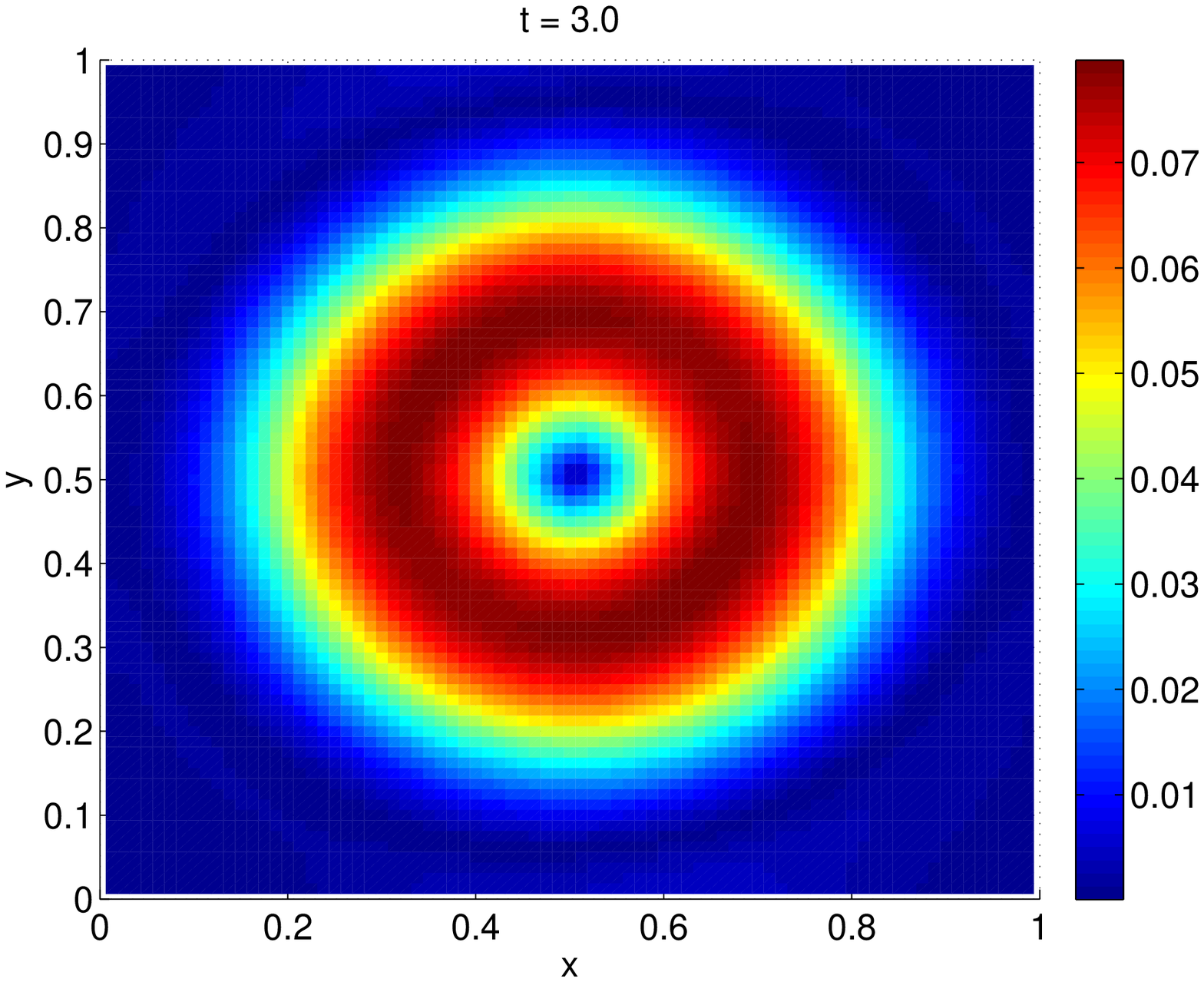}
   \caption{Gresho vortex problem: pseudo-colour plots of the Mach
     number at times $t=0,1,2,3$. In this problem $\veps=0.1$ and the
     CFL numbers are $\hat{\nu}=0.45$ and $\nu_{eff}=4.5$. $\cstab=\frac1{12}10^4$.}
   \label{fig:vortex}
 \end{figure}
 
 \subsubsection{Baroclinic Vorticity Generation Problem}
 \label{sec:baroclinic}
 
 The motivation for this problem is an analogous test studied in
 \cite{geratz}. The setup contains a right-going acoustic wave,
 crossing a wavy density fluctuation in the vertical
 direction. Specifically, the initial data read
 \begin{align*}
   \rho(x,y,0)&=\rho_0+\frac{1}{2}\veps\rho_1\left(1+\cos\left(\frac{\pi
       x}{L}\right)\right)+\Phi(y), \ \rho_0=1.0, \ \rho_1=0.001,\\
 u(x,y,0)&=\frac{1}{2}u_0\left(1+\cos\left(\frac{\pi
       x}{L}\right)\right), \ u_0=\sqrt{\gamma},\\
 v(x,y,0)&=0,\\
 p(x,y,0)&=p_0+\frac{1}{2}\veps p_1\left(1+\cos\left(\frac{\pi
       x}{L}\right)\right), \ p_0=1.0, \ p_1=\gamma.
 \end{align*}
 Here, the problem domain is $-L\leq x\leq L=1/\veps,0\leq y\leq
 L_y=2L/5$ with $\veps=0.05$. The function $\Phi$ is defined by
 \begin{equation*}
   \Phi(y)=
   \begin{cases}
     \rho_2\frac{y}{L_y}, & \mbox{if} \ 0\leq y \leq \frac{L_y}{2},\\
     \rho_2\left(\frac{y}{L_y}-1\right), & \mbox{otherwise},
   \end{cases}
 \end{equation*}
 where $\rho_2=1.8$.
 
The computational domain is divided into $400\times 80$ cells and the CFL
number is $\nuhat=0.45$ so that $\nu_{eff}=9$. The boundary conditions are
periodic and the slope limiting is performed with the minmod recovery with
$\theta=2$. In order to stabilize the scheme, we increased the stabilization
parameter in \eqref{eq:cstab} to $\cstab= \frac1{12}10^2$. The isolines of the
density at times $t=0.0,2.0,4.0,6.0$ and $8.0$ are given in
Figure~\ref{fig:baroclinic}.

 \begin{figure}[h!]
   \centering
   \includegraphics[height=0.18\textheight]{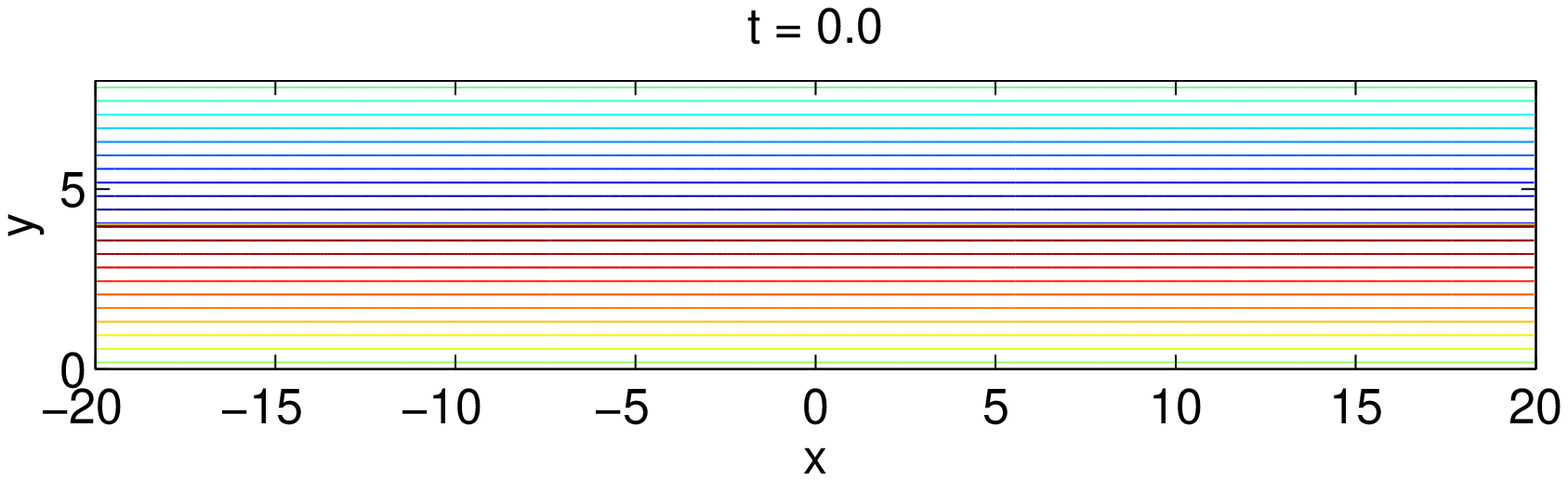} \\
   \includegraphics[height=0.18\textheight]{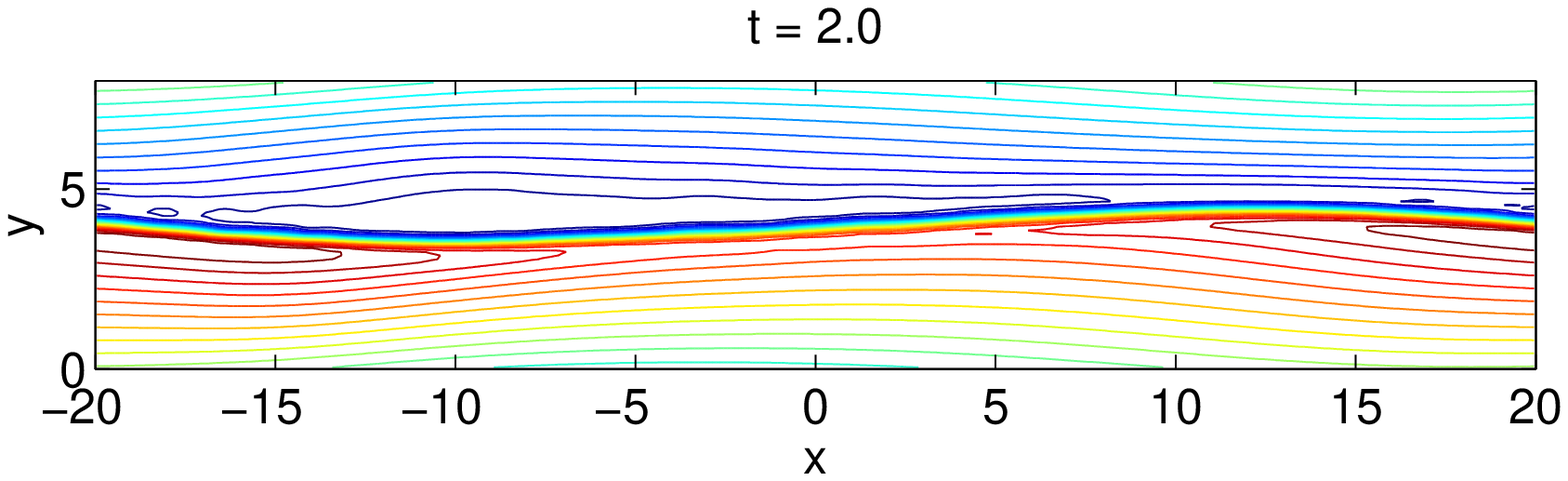} \\
   \includegraphics[height=0.18\textheight]{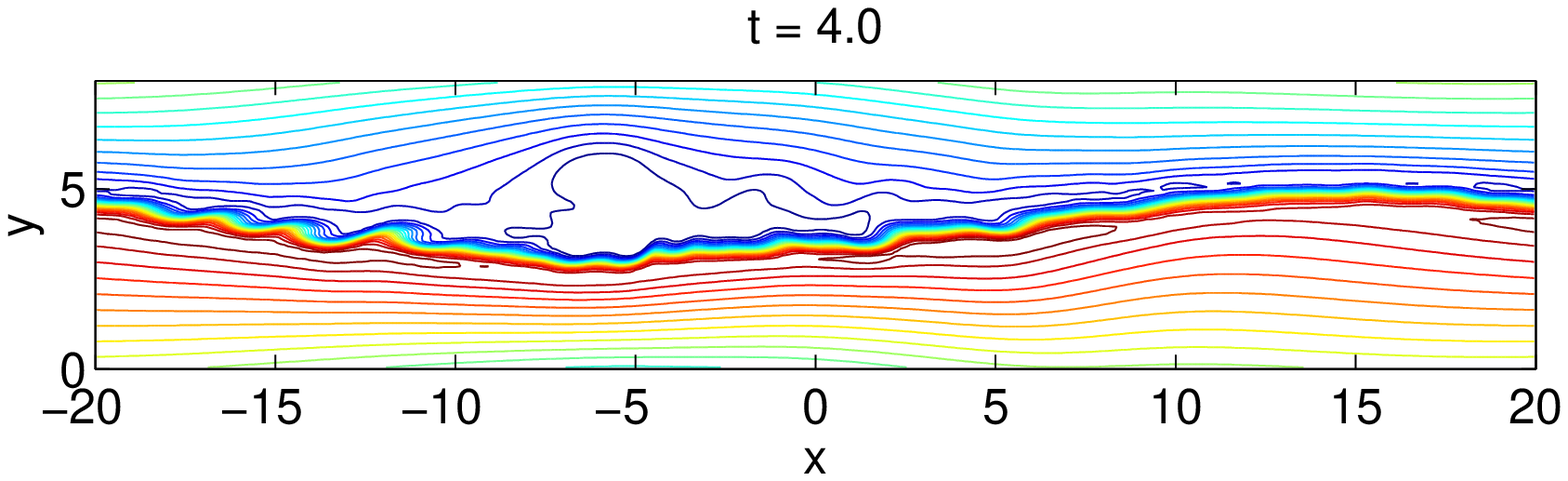} \\
   \includegraphics[height=0.18\textheight]{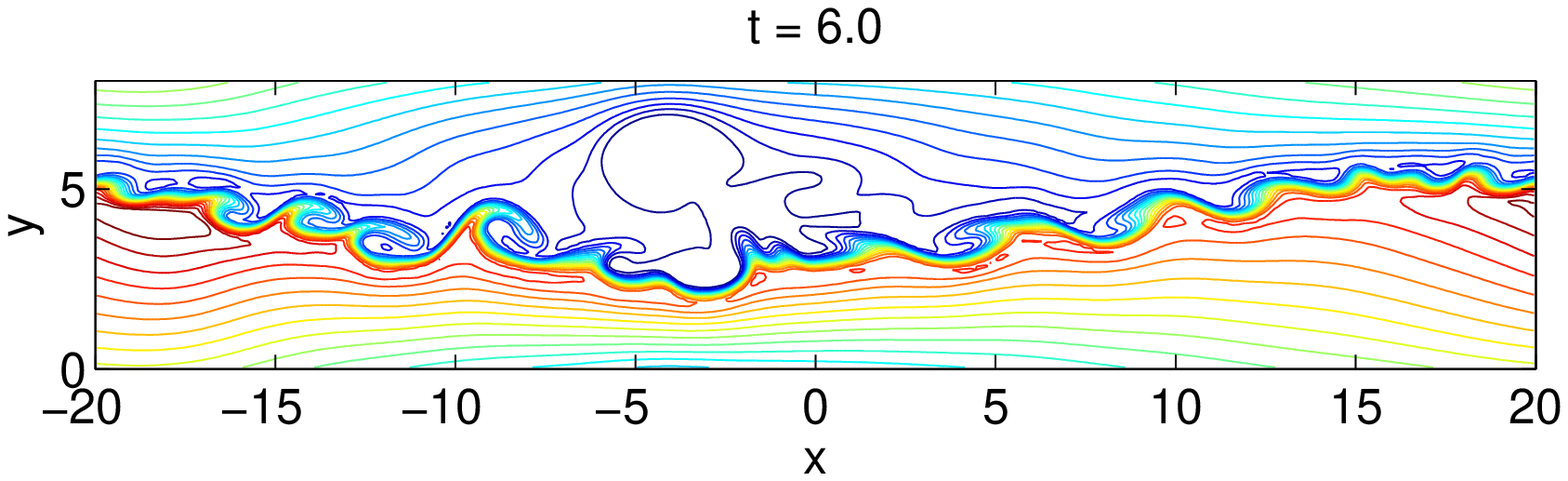} \\
   \includegraphics[height=0.18\textheight]{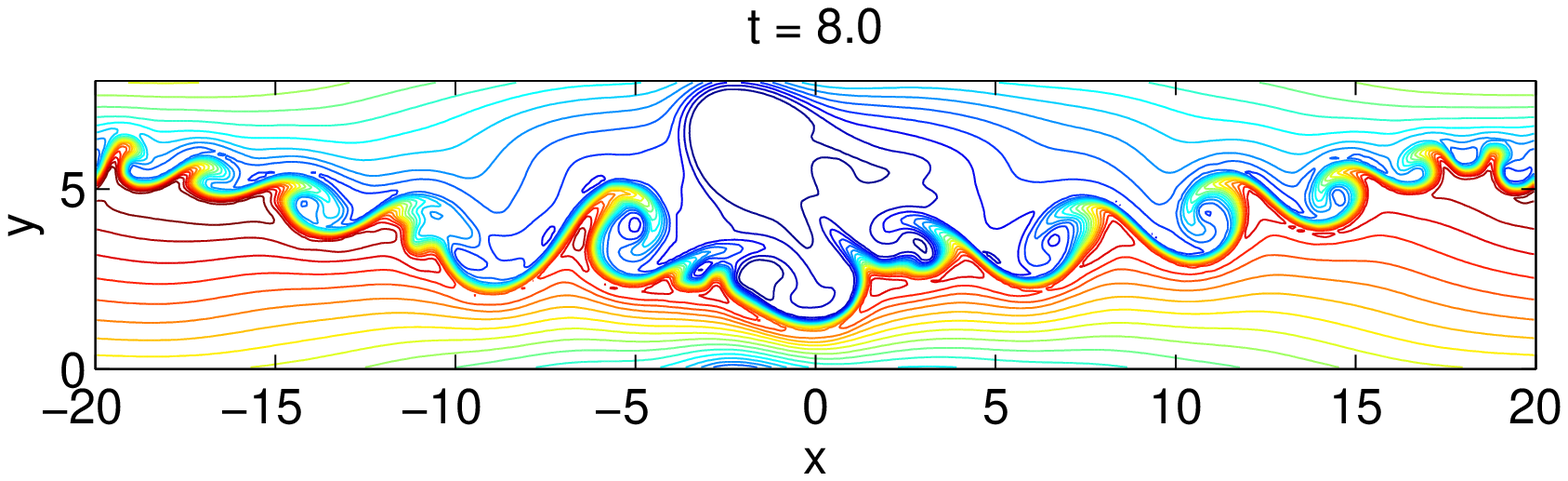} \\
   \caption{Baroclinic vorticity generation in weakly compressible
     flows. The isolines of the density at times $t=0,2,4,6$. The CFL
     numbers are $\nuhat=0.45,\nu_{eff}=9$ and $\veps=0.05$.
     $\cstab=\frac1{12}10^2$.}
   \label{fig:baroclinic}
 \end{figure}
 
 Note that there are two layers in the initial density distribution and
 these two layers are separated by a vertical fluctuation. These
 layers have different accelerations from the acoustic density
 perturbation. Hence, a rotational motion is induced along the
 separating layer. As a result, the well known Kelvin-Helmholtz
 instability develops and long wave sinusoidal shear layers are
 formed. The acoustic waves continue to move and the sinusoidal shear
 layers will now change and become unstable at the edges because of the
 larger density. As a result, they create small vortex structures and
 grow very fast. The problem illustrates how long-wave acoustic
 waves produce small scale flow features.

 
 
 \section{Concluding Remarks}
 \label{sec:conclusion}

 We presented a low Mach number finite volume scheme for the Euler
 equations of gas dynamics. The scheme is based on Klein's
 non-stiff/stiff decomposition of the fluxes \cite{klein},
 and an explicit/implicit time discretization due to Cordier
 et al.\ \cite{cordier}. Inspired by the latter, we replace the stiff energy equation
 by a nonlinear elliptic pressure equation.
 A crucial part of the
 method is a choice of reference pressure which ensures that both the non-stiff and
 the stiff subsystems are hyperbolic, and the second order PDE for the
 pressure is indeed elliptic. The CFL number is only related to the non-stiff
 characteristic speeds, independently of the Mach number.
 
 The second order accuracy of the scheme is based on MUSCL-type reconstructions
 and an appropriate combination of Runge-Kutta and Crank-Nicolson time stepping
 strategies due to Park and Munz \cite{park}. We have proven in
 Theorem~\ref{thm:ap} and \ref{thm:ap2} that our first and second order time
 discretizations are asymptotically consistent, uniformly with respect to
 $\veps$.

Since the Jacobian of the stiff flux function degenerates in the limit, as it
is well-known for finite volume schemes on collocated grids in the
incompressible case, we add a classical fourth order pressure derivative as    
stabilization \cite{FerzigerPeric} to the energy equation. The stabilization
contains a problem-dependent parameter, which is $O(10^{-1})$ for  our
one-dimensional test cases, but substantially higher for the two-dimensional
test cases.  However, for each example, this parameter is independent of the
Mach number. Given the stabilization, we can still prove asymptotic consistency
for a ratio $\dt/\dx$ which is independent of the Mach number, but we have to
reduce the spatial and the temporal grid sizes simultaneously as the Mach number
goes to zero.

According to the results presented here, it seems to be important to
study the effect of flux splittings, time discretizations, stabilization terms
and their interplay upon asymptotic consistency and stability. As a step in this
direction, Sch\"utz and Noelle \cite{NoelleSchuetz_2014} began to study the
modified equation of IMEX time discretizations of linear hyperbolic systems by
Fourier analysis, identifying both stable and unstable splittings. For nonlinear
problems, linearly implicit splittings such as those studied by  Bispen et al. 
\cite{Bispen_diss,BispenArunLukacovaNoelle_2014} show very good asymptotic
stability properties for the shallow water equations in one an two
space-dimensions. Since these correspond to isentropic Euler equations, it is
not clear how this splitting would perform for the non-isentropic gas dynamics,
which we studied in the present paper.

While we have proven uniform asymptotic consistency for our
time-discrete schemes in Theorem~\ref{thm:ap} and \ref{thm:ap2}, an analogous
theoretical result for a fully space-time discrete scheme is missing (for some
recent results in this direction, see ~\cite{haack, Bispen_diss}). Indeed, in
order to obtain a stable space discretization, the stabilization term
\eqref{eq:cstab} had to be introduced. Consequently, our results show that also
in the collocated case, for which it is well-known that it generates
checkerboard instabilities in the incompressible limit, some of the AP
properties can be preserved by a proper stabilization.

The results of some benchmark problems are presented in Section~\ref{sec:numerics}, which
validate the convergence, robustness and the efficiency of the scheme to capture the
weakly compressible flow features accurately.


 


\end{document}